\newcommand{\Cone}{{\mbox{Cone}}}
\newcommand{\Conv}{{\mbox{Conv}}}
\newcommand{\I}{{\cal I}}
\newcommand{\J}{{\cal J}}
\newcommand{\K}{{\cal K}}
\newcommand{\M}{{\cal M}}
\newcommand{\D}{{\cal D}}
\newcommand{\X}{{\cal X}}
\renewcommand{\S}{\cal S}
\def\bl#1{\textcolor{blue}{#1}}
\begin{document}

\title{
A Smoothing SQP Framework for a Class of Composite $L_q$ Minimization over Polyhedron
}

\titlerunning{An SSQP Framework for a Class of Composite $L_q$ Minimization over Polyhedron}        

\author{Ya-Feng Liu \and Shiqian Ma \and Yu-Hong~Dai \and Shuzhong Zhang}


\institute{Y.-F. Liu and Y.-H. Dai \at
              LSEC, ICMSEC, Academy of
Mathematics and Systems Science, \\
Chinese Academy of Sciences, Beijing 100190, China \\
              \email{yafliu@lsec.cc.ac.cn; dyh@lsec.cc.ac.cn}
           \and
           S. Ma \at
              Department of Systems Engineering and Engineering Management,\\
              The Chinese University of Hong Kong, Shatin, New Territories, Hong Kong, China\\
              \email{sqma@se.cuhk.edu.hk}
              \and
           S. Zhang \at
              Department of Industrial and Systems Engineering,\\
              University of Minnesota, Minneapolis, MN 55455, USA\\
              \email{zhangs@umn.edu}
}

\date{Received: date / Accepted: date}

\maketitle

\begin{abstract}
The composite $L_q~(0<q<1)$ minimization problem over a general polyhedron has received various applications in machine learning, wireless communications, image restoration, signal reconstruction, {etc.} This paper aims to provide a theoretical study on this problem. Firstly, we show that for any fixed $0<q<1$, finding the global minimizer of the problem, even its unconstrained counterpart, is strongly NP-hard. Secondly, we derive Karush-Kuhn-Tucker (KKT) optimality conditions for local minimizers of the problem. Thirdly, we propose a smoothing sequential quadratic programming framework for solving this problem. The framework requires a (approximate) solution of a convex quadratic program at each iteration. Finally, we analyze the worst-case iteration complexity of the framework for returning an $\epsilon$-KKT point; {i.e.}, a feasible point that satisfies a perturbed version of the derived KKT optimality conditions. To the best of our knowledge, the proposed framework is the first one with a worst-case iteration complexity guarantee for solving composite $L_q$ minimization over a general polyhedron.


\keywords{composite $L_q$ minimization \and $\epsilon$-KKT point \and  nonsmooth nonconvex non-Lipschitzian optimization \and  optimality condition \and smoothing approximation \and worst-case iteration complexity }
 \subclass{90C26 \and 90C30 \and 90C46 \and 65K05}
\end{abstract}


\section{Introduction}\label{intro}
In this paper, we consider the following polyhedral constrained composite nonsmooth nonconvex non-Lipschitzian $L_q$ minimization problem
\begin{equation}\label{pnorm}
\begin{array}{cl}
\displaystyle \min_{x} & \displaystyle F(x) := f(x)+h(x)\\
[5pt] \mbox{s.t.} & \displaystyle x\in \X,
\end{array}
\end{equation}
where
\begin{description}[3]
  \item {$f(x)$ is of the form} \begin{equation}\label{f}
    f(x)=\left\|\max\left\{b-Ax,0\right\}\right\|_q^q  = \sum_{m=1}^M \max\left\{b_m-a_m^Tx,0\right\}^q  \end{equation}
      with $A=\left[a_1,a_2,...,a_M\right]^T\in \mathbb{R}^{M\times N},~b=\left[b_1,b_2,...,b_M\right]^T\in\mathbb{R}^{M},$ and $0< q< 1;$
  \item $h(x)$ is a continuously differentiable function with $L_h$-Lipschitz-continuous gradient in $\X,$ that is,  \begin{equation}\label{L}\left\|\nabla h(x)-\nabla h(y)\right\|_2\leq L_h \left\|x-y\right\|_2,~\forall~x,y\in\X;\end{equation}
  \item and, $\X\subseteq\mathbb{R}^N$ is a polyhedral set.
\end{description}

Problem \eqref{pnorm} finds wide applications in information theory\cite{candes05titdecoding}, computational biology\cite{wagner04mpfolding}, wireless communications\cite{sidiropoulos11twcjpac,liu13tspjpac}, machine learning \cite{vapnik92svm,vapnik95svm}, image restoration \cite{nikolova08siamimagenonconvex,chen12tipimage,chen10siamissmoothcg,bianchen14mpfeasible}, signal processing \cite{donoho09siamreviewsparse,mourad10tspnonconvex}, and variable selection\cite{fan01jasavariable,huang09variableselection}. {Three specific applications} arising from machine learning, wireless communications, and information theory are given in Appendix A.

\subsection{Related Works}

Recently, many algorithms have been proposed to solve problem 
\begin{equation}\label{bianchen}\min_{x} h(x)+\left\|x\right\|_q^q.\end{equation}
In particular, when $h(x)$ is a convex quadratic function and $q=1$, problem \eqref{bianchen} is shown to be quite effective in finding a sparse vector to minimize $h(x)$ and various efficient algorithms \cite{beck09siamfista,hale10jcmcontinuation,toh11coapl1,wright09tspsparse,berg08siampareto,nesterov05mpsmooth} have been proposed to solve it.

When $q\in(0,1),$ problem \eqref{bianchen} is nonsmooth, nonconvex, and even not Lipschitz.
{Assuming that $h: \mathbb{R}^N\rightarrow [0,+\infty)$ is continuously differentiable and its gradient} satisfies \eqref{L}, Bian and Chen \cite{bian13siamoptquadratic} proposed a smoothing quadratic regularization (SQR) algorithm for problem \eqref{bianchen}
~and {established the worst-case iteration complexity result, which is $O(\epsilon^{-2}),$ for} the SQR algorithm to return an $\epsilon$-KKT point (or $\epsilon$-KKT solution, or $\epsilon$-stationary point, or $\epsilon$-scaled stationary point, or $\epsilon$-scaled first order stationary point) of problem \eqref{bianchen}. In \cite{bian14mpinterior}, Bian, Chen, and Ye proposed {a first order} interior-point method (using only the gradient information) and a second order interior-point method (using both the gradient and Hessian information) for problem \eqref{bianchen} with box constraints. They showed that the iteration complexity of {their first order method} for returning an $\epsilon$-scaled stationary point is $O(\epsilon^{-2})$ and the one of their second order method for returning an $\epsilon$-scaled second order stationary point is $O(\epsilon^{-3/2}).$
In \cite{chen13siamopttrust}, Chen, Niu, and Yuan derived affine-scaled second order necessary and sufficient conditions for local minimizers of the problem
\begin{equation}\label{chenniu}\min_{x} h(x)+\sum_{m=1}^M|a_m^Tx|^q,\end{equation}which includes \eqref{bianchen} as a special case.
Furthermore, they proposed a smoothing trust region Newton (STRN) method and proved that the sequence generated by the STRN algorithm {is globally convergent} to a point satisfying the affine-scaled second order necessary optimality condition. In \cite{bianchen14mpfeasible}, Bian and Chen proposed {an} SQR {algorithm} for problem \eqref{chenniu} (possibly with box constraints) and showed that the worst-case iteration complexity of the SQR algorithm for finding an $\epsilon$-stationary point is $O(\epsilon^{-2}).$ Cartis, Gould, and Toint \cite{cartis11siamoptcomposite} considered problem
$$\min_{x} h(x)+\varphi(c(x)),$$ where $h: \mathbb{R}^N\rightarrow \mathbb{R}$ and $c: \mathbb{R}^N\rightarrow \mathbb{R}^M$ are continuously differentiable and $\varphi: \mathbb{R}^M\rightarrow \mathbb{R}$ is convex and {is} globally Lipschitz continuous but possibly nonsmooth. They proved that it takes at most $O(\epsilon^{-2})$ iterations to obtain an $\epsilon$-KKT point by a first order trust region method or a quadratic regularization method.
Ghadimi and Lan \cite{lan13} generalized Nesterov's accelerated gradient (AG) method \cite{nesterov83}, originally designed for smooth convex optimization, to solve problem $$\min_{x} h(x)+\psi(x),$$ where $h: \mathbb{R}^N\rightarrow \mathbb{R}$ is continuous differentiable and $\psi(x): \mathbb{R}^N\rightarrow \mathbb{R}$ is a (simple) convex nonsmooth function with special structures. They showed that it takes at most $O(\epsilon^{-2})$ iterations to reduce a first order criticality measure below $\epsilon$ for the generalized AG method. Jiang and Zhang \cite{jiangzhang14} considered the following block-structured problem
\begin{equation*}
\begin{array}{cl}
\displaystyle \min_{x} & \displaystyle h(x_1,x_2,\ldots,x_M)+\sum_{m=1}^M \psi_m(x_m)\\
[5pt] \mbox{s.t.} & \displaystyle x_m\in \X_m, m=1,2,\ldots,M,
\end{array}
\end{equation*}where $h: \mathbb{R}^N\rightarrow \mathbb{R}$ is smooth, $\psi_m(x_m): \mathbb{R}^{N_m}\rightarrow \mathbb{R}$ are convex but nonsmooth. They showed that the conditional gradient and gradient projection type of methods can find an $\epsilon-$KKT point of the above problem within $O\left(\epsilon^{-2}\right)$ iterations. {Here we should notice} that the definitions of $\epsilon-$KKT points in the aforementioned works are different and thus are not comparable to each other.

In particular, when $h(x)=\frac{\rho}{2}\left\|Ax-b\right\|^2,$ problem \eqref{bianchen} becomes
\begin{equation}\label{l2lp}\min_{x}\frac{\rho}{2}\|Ax-b\|^2+\|x\|_q^q.\end{equation}
Chen et al. \cite{cheng14mpl2lpcomplexity} showed that problem \eqref{l2lp} is strongly NP-hard. 
Recently, iterative reweighted $L_1$ and $L_2$ minimization algorithms are proposed to (approximately) solve problem \eqref{l2lp} (see \cite{chen10siamissmoothcg,daubechies10iterative,lai09,lai11siamoptlq,lai13siamimproved} and {the} references therein). In \cite{Chen10siamoptlowerbound}, Chen, Xu, and Ye derived a lower bound theory for local minimizers of problem \eqref{l2lp}; i.e., each component of any local minimizer of problem \eqref{l2lp} is either zero or not less than a positive constant which only depends on the problem inputs $A,b,\rho,$ and $q.$ Lu \cite{Lu12iterative} extended the lower bound theory to problem \eqref{bianchen} with general $h(x)$ satisfying \eqref{L}. Based on the derived lower bound theory, Lu proposed a {novel} iterative reweighted minimization method for solving problem \eqref{bianchen} and provided a unified global convergence analysis for the aforementioned iterative reweighted minimization algorithms.

Another problem closely related to problem \eqref{l2lp} is
\begin{equation}\label{ye}
\displaystyle \min_{x}  \displaystyle \left\|x\right\|_q^q, \ \mbox{s.t.} \quad \displaystyle Ax=b.
\end{equation}
In \cite{ge11mpnote}, Ge, Jiang, and Ye showed that problem \eqref{ye} and its {smoothed} version are strongly NP-hard. Chartrand \cite{chartrand07splexact}, Chartrand and Staneva \cite{chartrand08iprip}, Foucart and Lai \cite{lai09}, and Sun \cite{sun12acharecovery} established some sufficient conditions under which problem \eqref{ye} is able to recover the {sparsest} solution to the undetermined linear system $Ax=b.$
Efficient iterative reweighted minimization algorithms were proposed to solve problem \eqref{ye} by Chartrand and Yin \cite{chartrand08icassp}, Foucart and Lai \cite{lai09}, Daubechies et al. \cite{daubechies10iterative}, Rao and Kreutz-Delgado \cite{rao99tspaffine}, and Cand\`es, Wakin, and Boyd  \cite{candes08reweight}. It was shown in \cite[Theorem 7.7(i)]{daubechies10iterative} that under suitable conditions, the sequence generated by the iterative reweighted $L_2$ minimization algorithms converges to the global minimizer of problem \eqref{ye}. Moreover, the following related problem
\begin{equation}\label{ye-nonnegative}
\displaystyle \min_{x} \displaystyle \left\|x\right\|_q^q, \ \mbox{s.t.} \quad \displaystyle Ax=b,\ x\geq 0,
\end{equation}
was also considered in \cite{ge11mpnote}, and the authors developed an interior-point potential reduction algorithm for solving problem \eqref{ye}, which is guaranteed to return a scaled $\epsilon$-KKT point in no more than $O(\epsilon^{-1}\log\epsilon^{-1})$ iterations. The similar idea was extended by Ji et al. \cite{anthony13infocom} to solve the matrix counterpart of problem \eqref{ye-nonnegative} where the unknown variable is a positive semidefinite matrix.

{Although many algorithms have been mentioned in the above, they cannot be used to solve problem \eqref{pnorm}.} For instance, the potential reduction algorithm in \cite{ge11mpnote} cannot be applied to solve problem \eqref{pnorm} where $h(x)$ is not concave; the SQR algorithms \cite{bian13siamoptquadratic,bianchen14mpfeasible} and the interior-point algorithms \cite{cheng14mpl2lpcomplexity} cannot deal with the composite $L_q$ term $f(x)$ and the general polyhedral constraint in problem \eqref{pnorm}; the algorithm proposed in \cite{cartis11siamoptcomposite} cannot be used to solve problem \eqref{pnorm} either, since the composite $L_q$ term $f(x)$ in the objective function of \eqref{pnorm} cannot be expressed as a form of $\varphi(c(x)).$ The aforementioned iterative reweighted minimization methods could be modified to solve problem \eqref{pnorm}. However, the worst-case iteration complexity of all existing iterative reweighted minimization methods remains unclear so far and global convergence of some of them are still unknown \cite{candes08reweight}. The goal of this paper is to develop an algorithmic framework for problem \eqref{pnorm} with worst-case iteration complexity guarantee.
%
\subsection{Our Contribution}
In this paper, we consider {polyhedral constrained} composite nonsmooth nonconvex non-Lipschitzian $L_q$ minimization problem \eqref{pnorm}, which includes problems \eqref{bianchen}, \eqref{chenniu}, \eqref{l2lp}, \eqref{ye}, and \eqref{ye-nonnegative} as special cases. A sharp difference between problem \eqref{pnorm} and {the} aforementioned problems lies in the composite term in $\|\cdot\|_q^q,$ i.e., problem \eqref{pnorm} tries to find a solution such that the number of positive components of the vector $b-Ax$ is as small as possible. However, problem \eqref{bianchen}, for instance, tries to find a solution such that the number of nonzero entries of $x$ is as small as possible. In other words, problem \eqref{pnorm} considered in this paper is essentially a sparse optimization problem with inequality constraints while all previously mentioned problems are sparse optimization with equality constraints.

We propose a {smoothing sequential quadratic programming (SSQP) framework} for solving problem \eqref{pnorm}, where a convex quadratic program (QP) is (approximately) solved at each iteration, and analyze the worst-case iteration complexity of the proposed algorithmic framework. One iteration in this paper refers to (approximately) solving one convex QP subproblem. To the best of our knowledge, this is the first algorithm/framework for solving polyhedral constrained composite non-Lipschitzian $L_q$ minimization with worst-case iteration complexity analysis. The {main} contributions of this paper are summarized as follows.
\begin{description}
  \item [-] {Problem \eqref{pnorm} is shown to be strongly NP-hard even when $h(x)=0$ and $\X=\mathbb{R}^N$ (see Theorem \ref{hardness});}
  \item [-] {KKT optimality conditions for local minimizers of problem \eqref{pnorm} are derived (see Theorem \ref{thm-optcondition});}
  \item [-] {A lower bound theory is developed for local minimizers of problem \eqref{pnorm} assuming that $h(x)$ is concave (see Theorem \ref{thm-lowerbound});}
  \item [-] {An SSQP algorithmic framework is proposed for solving problem \eqref{pnorm} and its worst-case iteration complexity is analyzed. In particular, we show in Theorem \ref{thm-complexity2} that the SSQP framework can return an $\epsilon$-KKT point of problem \eqref{pnorm} in Definition \ref{definitione} within $O\left(\epsilon^{q-4}\right)$ iterations. Here we should notice that the $\epsilon$-KKT point defined in Definition \ref{definitione} is stronger than the ones used in \cite{bian13siamoptquadratic,Chen10siamoptlowerbound,ge11mpnote,chen13siamopttrust,bianchen14mpfeasible} when problem \eqref{pnorm} reduces to problems \eqref{bianchen} and \eqref{chenniu}.}
\end{description}

%
%
%

The rest of this paper is organized as follows. In Section \ref{sec-complexity}, we show that problem \eqref{pnorm} (with $h(x)=0$ and $\X=\mathbb{R}^N$) is strongly NP-hard. In Section \ref{sec-optcondition}, we show that problem \eqref{pnorm} and auxiliary problem \eqref{psub4} are equivalent in the sense that the two problems share {the} same local minimizers. This equivalence {result} further implies the KKT optimality conditions as well as the lower bound theory for local minimizers of problem \eqref{pnorm} under the assumption that $h(x)$ is concave. In Section \ref{sec-smoothapp}, we give a smoothing approximation for problem \eqref{pnorm}. In Section \ref{sec-algorithm}, we propose an SSQP algorithmic framework for solving problem \eqref{pnorm} and give the worst-case iteration complexity of the proposed algorithmic framework. Finally, we make some concluding remarks in Section \ref{sec-conclusion}.



\textbf{Notations.} 
{We always denote $\M=\left\{1,2,\ldots,M\right\}$. For any set $\K,$ $|\K|$ stands for its cardinality. $\nabla h(x)$ is the gradient of a continuously differentiable function $h(x)$. $P_{\X}(x)$ is the projection of a point $x$ onto the convex set $\X.$  $I_N$ is the $N\times N$ identity matrix. Throughout this paper, $\|\cdot\|$ denotes the Euclidean norm unless otherwise specified.}

\section{Intractability Analysis}\label{sec-complexity}


In this section, we show that the $L_q$ minimization problem \eqref{pnorm} with any $q\in(0,1)$ is strongly NP-hard by proving the strong NP-hardness of its special case 
\begin{equation}\label{unconstrained}
  \min_{x}\left\|\max\left\{b-Ax,0\right\}\right\|_q^q.
\end{equation} The proof is based on a polynomial time
transformation from the strongly NP-complete 3-partition~problem\cite[Theorem 4.4]{garey79complexity}. The 3-partition problem can be described as follows: given a set of positive integers $\left\{a_i\right\}_{i\in\cal S}$ with ${\cal S}=\left\{1,2,\ldots,3m\right\}$ and a positive integer $B$ such that
$a_i\in(B/4,B/2)$~for all $i\in\cal S$~and~\begin{equation}\label{sum}\sum_{i\in\cal S}a_i=mB,\end{equation} the problem is to check whether there exists a partition ${\cal S}_{1},\ldots,{\cal S}_{m}$ of $\cal S$ such that \begin{equation}\label{Si}\sum_{i\in{\cal S}_j}a_i=B,~\forall~j=1,2,\ldots,m.\end{equation}
Notice that the constraints on $\left\{a_i\right\}$ imply that each ${\cal S}_j$ in \eqref{Si} must contain exactly three elements. 

\begin{theorem}\label{hardness}
{For any $q\in (0, 1)$, the unconstrained $L_q$ minimization problem \eqref{unconstrained} is strongly NP-hard and hence so is the polyhedral constrained $L_q$ minimization problem \eqref{pnorm}.}
\end{theorem}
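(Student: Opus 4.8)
The plan is to prove strong NP-hardness of the unconstrained problem \eqref{unconstrained}; since \eqref{unconstrained} is precisely the special case $h\equiv0$, $\X=\mathbb{R}^N$ of \eqref{pnorm}, strong NP-hardness of \eqref{pnorm} follows at once. I would reduce from the strongly NP-complete 3-partition problem with data $\{a_i\}_{i\in\S}$, $\S=\{1,\dots,3m\}$, and $B$. Introduce $N=3m^2$ real variables $x_{ij}$, $i\in\S$, $j\in\{1,\dots,m\}$, meant to encode a candidate partition through ``$x_{ij}=1$ iff $i\in{\S}_{j}$''. The reduced instance of \eqref{unconstrained} is given the objective $F(x)$ assembled from three families of terms, each of the form $\max\{c-v^Tx,0\}^q$ with a suitable scalar $c$ and vector $v$, hence a legitimate summand in \eqref{f}: a \emph{binary gadget} $\max\{x_{ij},0\}^q+\max\{1-x_{ij},0\}^q$ for every pair $(i,j)$; a \emph{row gadget} $\max\{1-\sum_{j}x_{ij},0\}^q+\max\{\sum_{j}x_{ij}-1,0\}^q$ for every $i\in\S$, which vanishes exactly when $\sum_{j}x_{ij}=1$; and a \emph{column gadget} $\max\{B-\sum_{i}a_ix_{ij},0\}^q+\max\{\sum_{i}a_ix_{ij}-B,0\}^q$ for every $j$, which vanishes exactly when $\sum_{i}a_ix_{ij}=B$. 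Here $M=6m^2+8m$, the construction is independent of $q$, and every entry of the resulting $A$ and $b$ lies in $\{0,\pm1,\pm a_i,\pm B\}$, so the reduction runs in polynomial time and keeps all numbers polynomially bounded, which is exactly what is needed to transfer \emph{strong} NP-hardness.

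The crux is an elementary inequality for the binary gadget. Writing $g(t)=\max\{t,0\}^q+\max\{1-t,0\}^q$, I would verify that $g(t)\ge1$ for all $t\in\mathbb{R}$, with equality if and only if $t\in\{0,1\}$: for $t\le0$ one has $g(t)=(1-t)^q\ge1$; for $t\ge1$, $g(t)=t^q\ge1$; and for $t\in(0,1)$, strict concavity of $s\mapsto s^q$ on $[0,\infty)$ (which holds precisely because $0<q<1$) gives $t^q+(1-t)^q>1$. Because the row and column gadgets are nonnegative and there are exactly $3m^2$ binary gadgets, this yields $F(x)\ge 3m^2$ for every $x$, with $F(x)=3m^2$ if and only if every $x_{ij}\in\{0,1\}$, every $\sum_{j}x_{ij}=1$, and every $\sum_{i}a_ix_{ij}=B$; equivalently, the sets ${\S}_{j}:=\{i:x_{ij}=1\}$ form a partition of $\S$ with $\sum_{i\in{\S}_{j}}a_i=B$ for each $j$. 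Under the standing assumption $a_i\in(B/4,B/2)$ this automatically forces $|{\S}_{j}|=3$, so such an $x$ exists if and only if the 3-partition instance is a yes-instance. Moreover $g(x_{ij})\to\infty$ as $|x_{ij}|\to\infty$, so $F$ is coercive and attains its minimum; hence $\min_x F(x)=3m^2$ in the yes-case and $\min_x F(x)>3m^2$ otherwise. Thus any algorithm for \eqref{unconstrained} --- indeed any algorithm that merely decides whether its optimal value is at most $3m^2$ --- would decide 3-partition.

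The step I expect to need the most care is the ``only if'' direction of the gadget analysis: showing that $F(x)=3m^2$ \emph{simultaneously} forces integrality of all $x_{ij}$ and feasibility of all the encoded linear equalities, and then checking that an integral feasible $x$ really does correspond to a valid 3-partition (this is where $a_i\in(B/4,B/2)$ enters, pinning $|{\S}_{j}|=3$). Alongside this, one must do the bookkeeping that certifies the reduction is number-preserving --- bounding $M$ and all coefficient magnitudes by a polynomial in the input --- so that \emph{strong} rather than ordinary NP-hardness is obtained. Everything else (coercivity and attainment of the minimum, and the ``if'' direction, which is immediate by substituting the $0$--$1$ incidence array of a valid partition) is routine.
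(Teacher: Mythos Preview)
Your proposal is correct and follows the same overall route as the paper: a polynomial, number-preserving reduction from 3-partition built on the binary gadget $g(t)=\max\{t,0\}^q+\max\{1-t,0\}^q\ge 1$ with equality exactly at $t\in\{0,1\}$.

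The one substantive difference is in the row and column gadgets. You use the two-sided forms $\max\{1-\sum_j x_{ij},0\}^q+\max\{\sum_j x_{ij}-1,0\}^q$ and $\max\{B-\sum_i a_i x_{ij},0\}^q+\max\{\sum_i a_i x_{ij}-B,0\}^q$, i.e.\ essentially $|\cdot|^q$ penalties, so that $F(x)=3m^2$ directly forces the equalities $\sum_j x_{ij}=1$ and $\sum_i a_i x_{ij}=B$. The paper instead uses one-sided gadgets, so that $F(x)=3m^2$ only yields the inequalities $\sum_j x_{ij}\le 1$ and $\sum_i a_i x_{ij}\ge B$; it then invokes the global identity $\sum_i a_i=mB$ in a short chain $mB\le \sum_j\sum_i a_i x_{ij}=\sum_i a_i\sum_j x_{ij}\le \sum_i a_i=mB$ to upgrade both to equalities. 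Your version is a bit more direct and sidesteps this summing argument at the cost of doubling the number of row/column terms; the paper's version keeps $M$ smaller but needs the extra counting step. Your added coercivity remark (so that the minimum is genuinely attained and one can speak of $\min_x F(x)>3m^2$ in the no-case) is a nice touch the paper leaves implicit.
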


\begin{proof}
We prove {the theorem} by constructing a polynomial time transformation from the 3-partition problem to the unconstrained $L_q$ minimization problem \eqref{unconstrained}\cite{garey79complexity,papadimitriou94complexity,vazirani01}.
For any given instance of the 3-partition problem with $\left\{a_i\right\}_{i\in\cal S},$ $m,$ and $B,$ we construct an instance of problem \eqref{unconstrained} with $M=3m^2+4m$ and $N=3m^2$ as follows:
\begin{equation}\label{instance-unc}
  \min_{x} f(x):=f_1(x)+f_2(x)+f_{3}(x),
\end{equation}
where \begin{eqnarray*}
  f_1(x)&=&\sum_{i=1}^{3m}\sum_{j=1}^m\left(\max\left\{x_{ij},0\right\}^q+\max\left\{1-x_{ij},0\right\}^q\right),\label{f1}\\
  \displaystyle f_2(x)&=&\sum_{i=1}^{3m} \max\left\{\sum_{j=1}^mx_{ij}-1,0\right\}^q,\label{f2}\\
  f_3(x)&=&\sum_{j=1}^m\max\left\{B-\sum_{i=1}^{3m}a_ix_{ij},0\right\}^q.\label{f3}
\end{eqnarray*}
It is easy to verify that $f_1(x)\geq 3m^2,~f_2(x)\geq 0,~f_3(x)\geq 0.$ Moreover, $f(x)=3m^2$ if and only if
\begin{eqnarray}
  &&x_{ij}\in\left\{0,\,1\right\},~\forall~i=1,2,\ldots,3m,~\forall~j=1,2,\ldots,m,\label{f1=}\\
  &&\sum_{j=1}^mx_{ij}\leq 1,~\forall~i=1,2,\ldots,3m,\label{f2=}\\
  &&\sum_{i=1}^{3m}a_ix_{ij}\geq B,~\forall~j=1,2,\ldots,m.\label{f3=}
\end{eqnarray}

Next, we show that the global minimum of problem \eqref{instance-unc} is not greater than $3m^2$ if and only if the answer to the 3-partition problem is yes.
We divide this into two steps. (a) ``if'' direction. Assuming that there exists a partition of $\S$ such that \eqref{sum} holds true, the system \eqref{f1=}, \eqref{f2=}, and \eqref{f3=} (with inequalities in \eqref{f2=} and \eqref{f3=} replaced by equalities) must have a feasible solution $x,$ which further implies $f(x)=3m^2.$ Thus the optimal value of problem \eqref{instance-unc} is not greater than $3m^2.$ (b) ``only if'' direction.
Assuming that there exists a point $x$ such that $f(x)=3m^2,$ we know that \eqref{f1=}, \eqref{f2=}, and \eqref{f3=} hold true at $x.$
In this case, by \eqref{f3=}, \eqref{f2=} and \eqref{sum}, we can get that
$$mB\leq \sum_{j=1}^{m}\sum_{i=1}^{3m}a_ix_{ij}=\sum_{i=1}^{3m}a_i\sum_{j=1}^{m}x_{ij}\leq \sum_{i=1}^{3m}a_i=mB.$$
Thus \eqref{f2=} and \eqref{f3=} must hold with equalities. Combining this with \eqref{f1=}, \eqref{f2=}, and \eqref{f3=}, we can see that
$x$ corresponds to a partition of $\S.$

Finally, since this transformation can be {done} in polynomial time and the 3-partition problem is strongly NP-complete, we conclude that problem \eqref{unconstrained} and thus problem \eqref{pnorm} are strongly NP-hard. \qed
\end{proof}
{Theorem \ref{hardness} indicates that, for any $q\in(0,1)$, it is computationally intractable to find the global minimizer of problem \eqref{pnorm} or even its unconstrained counterpart \eqref{unconstrained}.}  

\section{KKT Optimality {Conditions}}\label{sec-optcondition}
In this section, we derive the KKT optimality conditions and {a} lower bound theory for local minimizers of problem \eqref{pnorm}.
~{To do so, we introduce} an auxiliary problem \eqref{psub4} and {establish} a key one-to-one correspondence of local minimizers of problems \eqref{pnorm} and \eqref{psub4}.

For any given $\bar x\in\X,$ define {the sets}
\begin{equation}\label{M1M2M3}
  \begin{array}{rcl}
  \I_{\bar x}&=&\left\{m\,|\,(b-A\bar x)_m<0\right\},\\
  \J_{\bar x}&=&\left\{m\,|\,(b-A \bar x)_m>0\right\},\\
  \K_{\bar x}&=&\left\{m\,|\,(b-A\bar x)_m=0\right\},\end{array}\end{equation}
and {the corresponding problem}
 \begin{equation}\label{psub4}
\begin{array}{cl}
\displaystyle \min_{x} & \displaystyle \sum_{m\in\J_{\bar x}} (b-Ax)_m^q + h(x) \\
\mbox{s.t.} & (b-Ax)_{m}\leq 0,~m\in{\K}_{\bar x},\\
& \displaystyle x\in \X.
\end{array}
\end{equation} Notice that the objective value of problem \eqref{psub4} is equal to {that} of problem \eqref{pnorm} at point $\bar x.$ Moreover, the objective {function of} problem \eqref{psub4} is continuously differentiable in the neighborhood of point $\bar x.$. 


It is easy to verify the following lemma.
\begin{lemma}\label{lemma-easy}
  If $\bar x$ is a local minimizer of problem \eqref{pnorm}, then it is {also} a local minimizer of problem \eqref{psub4} with $\J_{\bar x}$ and $\K_{\bar x}$ given in \eqref{M1M2M3}.
\end{lemma}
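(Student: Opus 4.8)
The plan is to transfer local optimality directly, by showing that in a small ball around $\bar x$ the feasible set of \eqref{psub4} is contained in $\X$ and the objective of \eqref{psub4} coincides there with $F$. First I would check that $\bar x$ is feasible for \eqref{psub4}: by the definition of $\K_{\bar x}$ in \eqref{M1M2M3} we have $(b-A\bar x)_m=0\le 0$ for every $m\in\K_{\bar x}$, and $\bar x\in\X$ by hypothesis, so $\bar x$ satisfies all constraints of \eqref{psub4}.

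Next I would use continuity of the affine map $x\mapsto b-Ax$. Since $\I_{\bar x}$ and $\J_{\bar x}$ are finite and $(b-A\bar x)_m<0$ for $m\in\I_{\bar x}$, $(b-A\bar x)_m>0$ for $m\in\J_{\bar x}$, there is a radius $r>0$ such that for every $x$ with $\|x-\bar x\|\le r$ one still has $(b-Ax)_m<0$ for all $m\in\I_{\bar x}$ and $(b-Ax)_m>0$ for all $m\in\J_{\bar x}$. Now take any $x$ that is feasible for \eqref{psub4} with $\|x-\bar x\|\le r$. For $m\in\I_{\bar x}$ we get $\max\{(b-Ax)_m,0\}^q=0$; for $m\in\K_{\bar x}$ feasibility of $x$ for \eqref{psub4} gives $(b-Ax)_m\le 0$, hence again $\max\{(b-Ax)_m,0\}^q=0$; and for $m\in\J_{\bar x}$ we have $\max\{(b-Ax)_m,0\}^q=(b-Ax)_m^q$. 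Summing over $m\in\M$ yields $f(x)=\sum_{m\in\J_{\bar x}}(b-Ax)_m^q$, so $F(x)=\sum_{m\in\J_{\bar x}}(b-Ax)_m^q+h(x)$, which is exactly the objective of \eqref{psub4} at $x$. Specializing to $x=\bar x$ shows the objective of \eqref{psub4} at $\bar x$ equals $F(\bar x)$.

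Finally I would combine this with local optimality of $\bar x$ for \eqref{pnorm}: choose $\delta>0$ with $F(x)\ge F(\bar x)$ for all $x\in\X$ with $\|x-\bar x\|\le\delta$. Then for any $x$ feasible for \eqref{psub4} with $\|x-\bar x\|\le\min\{r,\delta\}$ — such $x$ in particular lies in $\X$ — the objective of \eqref{psub4} at $x$ equals $F(x)\ge F(\bar x)$, i.e. the objective of \eqref{psub4} at $\bar x$; hence $\bar x$ is a local minimizer of \eqref{psub4}. The only step requiring a word of care is the choice of $r$, namely that the sign pattern of $b-A\bar x$ persists on a small ball; this is routine because $\M$ is finite, so there is no genuine obstacle here (the nontrivial direction is the converse, which is handled separately in Section \ref{sec-optcondition}).
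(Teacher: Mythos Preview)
Your argument is correct and is precisely the routine verification the paper has in mind: the paper does not actually spell out a proof of this lemma, merely stating that it is ``easy to verify'' after noting that the objective of \eqref{psub4} agrees with $F$ at $\bar x$, so your write-up is a faithful elaboration of the same approach.
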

The following lemma {indicates} that the converse of Lemma \ref{lemma-easy} is also true.

\begin{lemma}\label{lemma-local}
If $\bar x$ is a local minimizer of problem \eqref{psub4} with $\J_{\bar x}$ and $\K_{\bar x}$ given in \eqref{M1M2M3}, then it is {also} a local minimizer of problem \eqref{pnorm}.
\end{lemma}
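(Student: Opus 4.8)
The plan is to show that, near $\bar x$, feasibility and the objective of problem \eqref{pnorm} agree with those of problem \eqref{psub4}, so that a local-minimality certificate for the latter transfers directly to the former. Concretely, I would first exploit continuity of the affine map $x\mapsto b-Ax$: since $(b-A\bar x)_m<0$ for $m\in\I_{\bar x}$ and $(b-A\bar x)_m>0$ for $m\in\J_{\bar x}$, there is a radius $\delta_1>0$ such that for all $x$ with $\|x-\bar x\|<\delta_1$ we still have $(b-Ax)_m<0$ for every $m\in\I_{\bar x}$ and $(b-Ax)_m>0$ for every $m\in\J_{\bar x}$. On this ball the indices in $\I_{\bar x}$ contribute nothing to $f$, the indices in $\J_{\bar x}$ contribute exactly $\sum_{m\in\J_{\bar x}}(b-Ax)_m^q$, and only the indices $m\in\K_{\bar x}$ (where the sign of $(b-Ax)_m$ may vary) remain ambiguous; hence
\[
F(x)=\sum_{m\in\J_{\bar x}}(b-Ax)_m^q+h(x)+\sum_{m\in\K_{\bar x}}\max\{(b-Ax)_m,0\}^q
\]
for all $x\in\X$ with $\|x-\bar x\|<\delta_1$.

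Next I would compare the two optimization problems on $B(\bar x,\delta)\cap\X$ for $\delta<\delta_1$ small enough (to fit inside the neighborhood on which $\bar x$ is optimal for \eqref{psub4}). Let $x$ be any point in this set; I claim $F(x)\ge F(\bar x)$. Split into two cases according to whether $x$ is feasible for the constraints $(b-Ax)_m\le 0$, $m\in\K_{\bar x}$, of problem \eqref{psub4}. If it is feasible, then the $\K_{\bar x}$-sum above vanishes, so $F(x)=\sum_{m\in\J_{\bar x}}(b-Ax)_m^q+h(x)$, which is the objective of \eqref{psub4} at $x$, and by local optimality of $\bar x$ for \eqref{psub4} this is $\ge\sum_{m\in\J_{\bar x}}(b-A\bar x)_m^q+h(\bar x)=F(\bar x)$ (using that the $\K_{\bar x}$-terms vanish at $\bar x$ too). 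If instead $x$ violates some constraint, i.e. $(b-Ax)_m>0$ for some $m\in\K_{\bar x}$, then those terms contribute a strictly positive amount $\sum_{m\in\K_{\bar x}}\max\{(b-Ax)_m,0\}^q>0$ to $F(x)$; here the key move is to show the remaining part $\sum_{m\in\J_{\bar x}}(b-Ax)_m^q+h(x)$ cannot be much smaller than $F(\bar x)$ — specifically that it is bounded below by $F(\bar x)-o(1)$ as $\delta\to 0$, which is false in general, so I need to be more careful.

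The cleaner route, and the one I would actually carry out, is to replace the naive split by a \textbf{projection argument}. Given $x\in B(\bar x,\delta)\cap\X$ violating the $\K_{\bar x}$-constraints, project $x$ onto the polyhedron $\X\cap\{x:(b-Ax)_m\le 0,\ m\in\K_{\bar x}\}$, and — since $\bar x$ itself lies in this polyhedron — the projection $\hat x$ satisfies $\|\hat x-\bar x\|\le\|x-\bar x\|<\delta$ and is feasible for \eqref{psub4}. By the already-handled feasible case, $F(\hat x)\ge F(\bar x)$. It then remains to compare $F(x)$ with $F(\hat x)$: the functions $\sum_{m\in\J_{\bar x}}(b-A\cdot)_m^q+h(\cdot)$ are Lipschitz on the compact ball (the $q$-power terms stay bounded away from the non-Lipschitz point $0$ by the choice of $\delta_1$), while the extra positive $\K_{\bar x}$-contribution in $F(x)$, of order $\|x-\hat x\|^q$, dominates the Lipschitz deficit, of order $\|x-\hat x\|$, once $\|x-\hat x\|$ is small — precisely because $q<1$. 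Shrinking $\delta$ so that $\|x-\hat x\|$ (which is at most $2\delta$) is below the threshold where $t^q$ beats the Lipschitz constant times $t$ gives $F(x)\ge F(\hat x)\ge F(\bar x)$. Combining the two cases, $\bar x$ is a local minimizer of \eqref{pnorm}.

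The main obstacle is exactly this last comparison: one must rule out the possibility that moving from the infeasible $x$ back to a feasible point decreases the smooth-plus-$L_q^q$ part by more than the infeasibility penalty gains. The crucial structural facts making it work are (i) $q<1$, so $t^q$ dominates any linear $Ct$ near $t=0$, and (ii) away from $\bar x$'s zero set the remaining objective is genuinely Lipschitz, so its variation is only linear in the displacement. Care is also needed to bound $\|x-\hat x\|$ in terms of $\|x-\bar x\|$ and to ensure $\hat x$ stays within the neighborhood on which local optimality for \eqref{psub4} was assumed — both follow from the nonexpansiveness of projection onto a convex set and the fact that $\bar x$ is in the projection target.
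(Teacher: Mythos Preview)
Your projection strategy is a genuinely different route from the paper's proof, which works direction by direction: it uses the Minkowski--Weyl theorem to write the feasible-direction cone of $\X$ at $\bar x$ as a finite union of finitely generated subcones (one for each sign pattern of $(Ad)_{\K_{\bar x}}$), establishes $F(\bar x+\epsilon d)\ge F(\bar x)$ along every extreme ray $d$ for small $\epsilon>0$ via the same $t^q$-versus-$t$ dominance you invoke, and then passes from rays to a full neighborhood. Your approach sidesteps the cone decomposition entirely, which is cleaner, but it needs one extra polyhedral ingredient that you have not supplied.

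That missing ingredient is exactly where your argument has a gap. You assert that the extra $\K_{\bar x}$-contribution at an infeasible $x$ is ``of order $\|x-\hat x\|^q$,'' but you never explain why the constraint violation controls the projection distance $\|x-\hat x\|$ \emph{from below}. A priori, projecting $x$ onto the polyhedron $P:=\X\cap\{y:(b-Ay)_m\le 0,\ m\in\K_{\bar x}\}$ could move $x$ much farther than the size of the violated residuals suggests --- think of projecting near a very acute corner of $P$. What saves you is Hoffman's error bound: since $P$ is a polyhedron and $x\in\X$ already satisfies the $\X$-constraints, there is a constant $\tau>0$ (depending only on $A$ and $\X$, not on $x$) with
\[
\|x-\hat x\|\;\le\;\tau\,\bigl\|\max\{(b-Ax)_{\K_{\bar x}},0\}\bigr\|,
\]
whence $\sum_{m\in\K_{\bar x}}\max\{(b-Ax)_m,0\}^q\ge c\,\|x-\hat x\|^q$ with $c=(\tau\sqrt{|\K_{\bar x}|})^{-q}$. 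With this bound in hand, your $t^q$-versus-$Lt$ comparison goes through exactly as you describe, and the remaining pieces (nonexpansiveness of the projection to keep $\hat x$ in the ball, local Lipschitzness of the $\J_{\bar x}$-part and of $h$) are correct. Without Hoffman, however, the key inequality is unjustified, so you should invoke it explicitly at that step.
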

{Lemma \ref{lemma-local} can be verified by using the two facts:} the feasible direction cone of problem \eqref{pnorm} at any feasible point is finitely generated (because $\X$ is a polyhedral set) and the function $z^q$ is non-Lipschitz and concave with respect to $z\geq 0.$ Since the detailed proof of Lemma \ref{lemma-local} is technical, we relegate it to Appendix B.



{We are now ready to provide the main theorem of this section, which} presents the KKT optimality conditions for local minimizers of problem \eqref{pnorm}.
\begin{theorem}\label{thm-optcondition}
\textbf{[KKT Optimality Conditions]} 
If $\bar x\in\X$ is a local minimizer of problem \eqref{pnorm}, {there must exist} $\bar\lambda\geq0\in\mathbb{R}^{\left|\K_{\bar x}\right|}$ such that 
\begin{equation}\label{KKT1}\bar\lambda_m(b-A\bar x)_m=0,~\forall~m\in\K_{\bar x}\end{equation} and
\begin{equation}\label{KKT2}\bar x-P_{\X}\left(\bar x-\nabla L(\bar x,\bar\lambda)\right)=0,\end{equation}where
\begin{equation}\label{Lag}\begin{array}{rl}L(x,\lambda)=&\displaystyle \sum_{m\in\J_{\bar x}}(b-Ax)_m^q+h(x)\displaystyle +\sum_{m\in\K_{\bar x}}\lambda_m(b-Ax)_m,\end{array}\end{equation}
and $\J_{\bar x}$ and $\K_{\bar x}$ are defined in \eqref{M1M2M3}.
\end{theorem}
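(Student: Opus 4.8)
The plan is to derive the KKT conditions by first invoking Lemma \ref{lemma-easy} to transfer the problem to the auxiliary problem \eqref{psub4}, and then applying standard first-order optimality theory for a smooth objective over a polyhedral-type feasible set. Concretely, since $\bar x$ is a local minimizer of \eqref{pnorm}, Lemma \ref{lemma-easy} guarantees that $\bar x$ is a local minimizer of \eqref{psub4}. The objective of \eqref{psub4}, namely $\sum_{m\in\J_{\bar x}}(b-Ax)_m^q + h(x)$, is continuously differentiable in a neighborhood of $\bar x$ because $(b-A\bar x)_m>0$ for $m\in\J_{\bar x}$, so $z\mapsto z^q$ is smooth there. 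The feasible set of \eqref{psub4} is the intersection of the polyhedron $\X$ with the finitely many linear inequalities $(b-Ax)_m\le 0$, $m\in\K_{\bar x}$, hence it is itself a polyhedron.

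The key technical point is that for a polyhedral feasible set, the Abadie constraint qualification holds automatically at every feasible point, so the first-order necessary optimality condition for the smooth problem \eqref{psub4} yields genuine KKT multipliers. First I would write the first-order condition: there exist $\bar\lambda_m\ge 0$ for $m\in\K_{\bar x}$ with complementary slackness $\bar\lambda_m(b-A\bar x)_m=0$ (which is \eqref{KKT1}, and in fact automatic here since $(b-A\bar x)_m=0$ for all $m\in\K_{\bar x}$) such that $-\nabla L(\bar x,\bar\lambda)$ lies in the normal cone to $\X$ at $\bar x$, where $L$ is the Lagrangian \eqref{Lag} with only the $\K_{\bar x}$-constraints dualized. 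Then I would convert the normal-cone inclusion into the projected-gradient fixed-point form \eqref{KKT2}: for a closed convex set $\X$, the statement $-\nabla L(\bar x,\bar\lambda)\in N_\X(\bar x)$ is equivalent to $\bar x = P_\X\bigl(\bar x - \nabla L(\bar x,\bar\lambda)\bigr)$, which is a standard variational-inequality identity.

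I would carry out the steps in this order: (1) apply Lemma \ref{lemma-easy} to reduce to \eqref{psub4}; (2) observe smoothness of the objective near $\bar x$ and polyhedrality of the feasible set; (3) invoke the KKT theorem for smooth minimization over a polyhedron (where a constraint qualification is not needed) to obtain multipliers $\bar\lambda\ge 0$ for the $m\in\K_{\bar x}$ constraints, the multipliers for the inequalities defining $\X$ being absorbed into the normal-cone characterization; (4) record complementary slackness \eqref{KKT1}; (5) rewrite the stationarity inclusion as the projection equation \eqref{KKT2} using the projection characterization of the normal cone.

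The main obstacle, such as it is, is mostly bookkeeping rather than deep: one must be careful that the objective of \eqref{psub4} is differentiable precisely because the active set $\K_{\bar x}$ and the negative set $\I_{\bar x}$ have been peeled off, leaving only the strictly positive terms $m\in\J_{\bar x}$; and one must correctly combine the polyhedral description of $\X$ with the extra linear inequalities indexed by $\K_{\bar x}$ so that Abadie's CQ applies to the whole system. Since $\X$ need only be given abstractly as a polyhedron (not through an explicit inequality representation), it is cleanest to keep the multipliers for $\X$ implicit and express stationarity through $N_\X(\bar x)$ / the projection operator, which is exactly the form \eqref{KKT2} takes; this avoids any issue with the particular inequality representation of $\X$.
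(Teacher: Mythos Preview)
Your proposal is correct and follows essentially the same route as the paper: reduce to the smooth auxiliary problem \eqref{psub4} via Lemma~\ref{lemma-easy}, then read off the KKT conditions for a smooth objective over a polyhedral feasible set and express stationarity through the projection identity. The paper's proof additionally invokes Lemma~\ref{lemma-local} to state an \emph{equivalence} of local minimizers between \eqref{pnorm} and \eqref{psub4}, but since the theorem only asserts necessary conditions, your use of Lemma~\ref{lemma-easy} alone is sufficient and slightly more economical; your explicit remarks on Abadie CQ for polyhedra and on why \eqref{KKT1} is automatic (because $(b-A\bar x)_m=0$ for $m\in\K_{\bar x}$) fill in details the paper leaves implicit.
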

\begin{proof}
  By Lemmas \ref{lemma-easy} and \ref{lemma-local}, $\bar x$ is a local minimizer of problem \eqref{pnorm} if and only if it is a local minimizer of problem \eqref{psub4} with $\J_{\bar x}$ and $\K_{\bar x}$ given in \eqref{M1M2M3}. Combining this equivalence and the fact that $L(x,\lambda)$ in \eqref{Lag} is the Lagrangian function of problem \eqref{psub4} with $\lambda$ being the associated Lagrangian multiplier, we obtain \eqref{KKT1} and \eqref{KKT2} immediately.\qed
\end{proof}

Note that the following version of the KKT point (or stationary point, or scaled KKT point, or scaled stationary point, or first-order stationary point) for problems \eqref{bianchen} and \eqref{chenniu} has been used in many previous works (see, e.g., \cite{bian13siamoptquadratic,Chen10siamoptlowerbound,ge11mpnote,chen13siamopttrust,bianchen14mpfeasible}).
\begin{definition}\label{def:kkt}
  $\bar x$ is called a KKT point of problem \eqref{bianchen} if it satisfies \begin{equation}\label{kktother}q|\bar x|^q+\bar X\nabla h(\bar x)=0,\end{equation} where $|\bar x|^q=\left(|\bar x_1|^q,\ldots,|\bar x_N|^q\right)^T$ and $\bar X=\text{diag}\left(\bar x_1,\ldots,\bar x_N\right).$
\end{definition}

\begin{definition}\label{def:kkt2}
  $\bar x$ is called a KKT point of problem \eqref{chenniu} if it satisfies \begin{equation}\label{kktother2}
  Z_{\bar x}^T\nabla F_{{\bar x}}(\bar x)=0,\end{equation} where
  \begin{equation*}\label{reducedF}F_{{\bar x}}(x)=\sum_{a_m^T{\bar x}\neq 0} \left|a_m^Tx\right|^q+h(x)\end{equation*} 
  and $Z_{\bar x}$ is the matrix whose columns form an orthogonal basis for the null space of $\left\{a_m\,|\,a_m^T\bar x=0\right\}.$ 
\end{definition}

In the following, we show that our definition of the KKT point for problem \eqref{pnorm} in Theorem \ref{thm-optcondition} reduces to the ones in Definitions \ref{def:kkt} and \ref{def:kkt2} when problem \eqref{pnorm} reduces to problems \eqref{bianchen} and \eqref{chenniu}, respectively.

\begin{proposition}
  When problem \eqref{pnorm} reduces to problem {\eqref{chenniu}}, there holds \begin{equation}\label{equivalence-reduce}\eqref{KKT1}~\text{and}~\eqref{KKT2}\Longleftrightarrow \eqref{kktother2};\end{equation}
  When problem \eqref{pnorm} reduces to problem {\eqref{bianchen}}, there holds $$\eqref{KKT1}~\text{and}~\eqref{KKT2}\Longleftrightarrow \eqref{kktother}.$$
\end{proposition}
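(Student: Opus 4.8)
I would establish both equivalences by spelling out the reductions explicitly and then matching the two stationarity systems term by term. First recall that \eqref{chenniu} is precisely the instance of \eqref{pnorm} with $\X=\mathbb{R}^N$, $b=0$, and the rows of $A$ taken to be the $2M$ vectors $a_1,-a_1,a_2,-a_2,\ldots,a_M,-a_M$; this is legitimate because $|t|^q=\max\{t,0\}^q+\max\{-t,0\}^q$. Problem \eqref{bianchen} is then the further specialization $a_m=e_m$, $M=N$. Fix a local minimizer $\bar x$ and inspect, for each $m$, the pair of rows $\pm a_m$: if $a_m^T\bar x>0$ then $-a_m$ contributes a row to $\J_{\bar x}$ and $a_m$ a row to $\I_{\bar x}$; if $a_m^T\bar x<0$ the roles swap; and if $a_m^T\bar x=0$ both rows fall into $\K_{\bar x}$. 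From this I would extract two facts: (a) the constraints ``$(b-Ax)_m\le 0,\ m\in\K_{\bar x}$'' in the auxiliary problem \eqref{psub4} amount exactly to $a_m^Tx=0$ for every $m$ with $a_m^T\bar x=0$; and (b) on a neighbourhood of $\bar x$, $\sum_{m\in\J_{\bar x}}(b-Ax)_m^q+h(x)$ coincides with the function $F_{\bar x}(x)$ of Definition \ref{def:kkt2}, so in particular the two have the same gradient at $\bar x$.

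Next I would substitute into the KKT system of Theorem \ref{thm-optcondition}. Because $\X=\mathbb{R}^N$, the projection $P_{\X}$ is the identity, so \eqref{KKT2} is equivalent to $\nabla L(\bar x,\bar\lambda)=0$; and \eqref{KKT1} is automatic since $(b-A\bar x)_m=0$ whenever $m\in\K_{\bar x}$. Denoting by $\lambda_m^{+},\lambda_m^{-}\ge0$ the multipliers attached to the two rows $\pm a_m$ (for $m$ with $a_m^T\bar x=0$), the $x$-gradient of the $\K_{\bar x}$-term of the Lagrangian \eqref{Lag} equals $\sum_{a_m^T\bar x=0}(\lambda_m^{-}-\lambda_m^{+})\,a_m$, and as $(\lambda_m^{+},\lambda_m^{-})$ runs over $[0,\infty)^2$ the scalar $\lambda_m^{-}-\lambda_m^{+}$ runs over all of $\mathbb{R}$. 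Together with (b), this shows that the KKT system of \eqref{pnorm} at $\bar x$ is equivalent to $\nabla F_{\bar x}(\bar x)\in\mathrm{span}\{a_m:a_m^T\bar x=0\}$. Since the columns of $Z_{\bar x}$ form a basis of the null space of $\{a_m:a_m^T\bar x=0\}$, that span is exactly $\Range(Z_{\bar x})^{\perp}$, so the membership is the same as $Z_{\bar x}^T\nabla F_{\bar x}(\bar x)=0$; this is \eqref{kktother2} and proves \eqref{equivalence-reduce}.

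For the second equivalence I would specialize $a_m=e_m$. Then ``$\nabla F_{\bar x}(\bar x)\in\mathrm{span}\{e_i:\bar x_i=0\}$'' says exactly that the $i$-th component of $\nabla F_{\bar x}(\bar x)$ vanishes for every $i$ with $\bar x_i\neq0$, i.e. $q\,\mathrm{sign}(\bar x_i)\,|\bar x_i|^{q-1}+\partial_i h(\bar x)=0$ whenever $\bar x_i\neq0$. Multiplying this scalar relation by $\bar x_i$ yields $q|\bar x_i|^q+\bar x_i\,\partial_i h(\bar x)=0$, and for $i$ with $\bar x_i=0$ the same identity holds trivially; stacking over $i$ gives $q|\bar x|^q+\bar X\nabla h(\bar x)=0$, which is \eqref{kktother}. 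Conversely, \eqref{kktother} read componentwise and divided through by $\bar x_i\neq0$ recovers the vanishing of $[\nabla F_{\bar x}(\bar x)]_i$ on exactly those coordinates, so the implication reverses.

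The routine part of all this is substitution; the two points needing genuine care are (i) checking that the summands of $f$ indexed by $\J_{\bar x}$ really reassemble, near $\bar x$, into $F_{\bar x}$ and that the $\K_{\bar x}$-constraints really become equalities — i.e. the bookkeeping in (a)--(b); and (ii) observing that the nonnegativity of the two KKT multipliers on the rows $+a_m$ and $-a_m$ places no restriction on their difference, which is precisely what lets the one-sided multipliers of \eqref{pnorm} reproduce the sign-free multipliers implicit in \eqref{kktother2}. I expect (i) to be the main obstacle, though only in the sense of being notation-heavy; there is no conceptual difficulty.
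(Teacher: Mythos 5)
Your proposal is correct and follows essentially the same route as the paper: reduce the auxiliary problem \eqref{psub4} to the equality-constrained problem with $a_m^Tx=0$ on $\hat\K_{\bar x}$ (so \eqref{KKT1} is vacuous and, with $\X=\mathbb{R}^N$, \eqref{KKT2} becomes $\nabla\hat L(\bar x,\bar\lambda)=0$), and then identify stationarity with $\nabla F_{\bar x}(\bar x)\in\mathrm{span}\left\{a_m\,:\,a_m^T\bar x=0\right\}=\Range(Z_{\bar x})^{\perp}$, i.e.\ \eqref{kktother2}. Your explicit bookkeeping of the doubled rows $\pm a_m$ and of the two nonnegative multipliers collapsing into a sign-free one, as well as the componentwise verification of \eqref{kktother}, only makes explicit what the paper leaves implicit (it writes $\bar\lambda\in\mathbb{R}^{|\hat\K_{\bar x}|}$ directly and dismisses the \eqref{bianchen} case with ``the same arguments'').
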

\begin{proof}
For succinctness, we only show the first statement of the proposition. The second one can be shown by using the same arguments. When problem \eqref{pnorm} reduces to \eqref{chenniu}, problem \eqref{psub4} reduces to
\begin{equation*}\label{psub5}
\begin{array}{cl}
\displaystyle \min_{x} & \displaystyle \sum_{m\in\hat\I_{\bar x}} \left(-a_m^Tx\right)^q+\sum_{m\in\hat\J_{\bar x}} \left(a_m^Tx\right)^q + h(x) \\
\mbox{s.t.} & a_m^Tx=0,~m\in\hat{\K}_{\bar x},
\end{array}
\end{equation*} with 
\begin{equation*}\label{M1M2M3-special}
  \begin{array}{rcl}
  \hat\I_{\bar x}=\left\{m\,|\,a_m^T\bar x<0\right\},~\hat\J_{\bar x}=\left\{m\,|\,a_m^T\bar x>0\right\},~\text{and}~\hat\K_{\bar x}&=&\left\{m\,|\,a_m^T\bar x=0\right\}.\end{array}\end{equation*}
Therefore, the KKT optimality conditions $\eqref{KKT1}~\text{and}~\eqref{KKT2}$ in Theorem \ref{thm-optcondition} reduce to {the following}: there exists $\bar\lambda\in\mathbb{R}^{\left|\hat\K_{\bar x}\right|}$ 
  such that
\begin{equation}\label{kkt1-special}\bar\lambda_{m}a_m^T\bar x=0,~\forall~m\in\hat\K_{\bar x},\end{equation}
and
\begin{equation}\label{kkt2-special}\nabla \hat L(\bar x,\bar\lambda)=0,\end{equation}
where
\begin{equation*}\label{LAb-special}
\begin{array}{rl}
\hat L(x,\lambda)=&\displaystyle \sum_{m\in\hat\I_{\bar x}} \left(-a_m^Tx\right)^q+\sum_{m\in\hat\J_{\bar x}} \left(a_m^Tx\right)^q+h(x)+\sum_{m\in\hat\K_{\bar x}}\lambda_m a_m^Tx.
\end{array}\end{equation*}

 Hence, to show \eqref{equivalence-reduce}, it suffices to show that
  $$\eqref{kkt1-special}~\text{and}~\eqref{kkt2-special}\Longleftrightarrow \eqref{kktother2}.$$
  {To establish} the direction ``$\Longrightarrow$'', we
recall the definitions of $Z_{\bar x}$ (see Definition \ref{def:kkt2}) and $\hat L(x,\lambda).$ {By \eqref{kkt2-special}, we immediately have}
$$Z_{\bar x}^T\nabla F_{{\bar x}}(\bar x)=Z_{\bar x}^T\nabla\hat L(\bar x,\bar\lambda)=0.$$
 {To establish} the direction ``$\Longleftarrow$'', we have by \eqref{kktother2} and the {definition} of $Z_{\bar x}$ that 
\begin{align*}
  \nabla F_{{\bar x}}(\bar x)\in Z_{\bar x}^{\perp}&\Longleftrightarrow \nabla F_{{\bar x}}(\bar x)\in\text{span}\left\{a_m,m\in\hat{\K}_{\bar x}\right\}\\
  &\Longleftrightarrow \nabla F_{{\bar x}}(\bar x)=-\sum_{m\in\hat{\K}_{\bar x}} \bar \lambda_m a_m~\text{for~some~}\left\{\bar \lambda_m\right\}_{m\in\hat{\K}_{\bar x}},
\end{align*} which implies \eqref{kkt2-special} in turn. Due to the definition of $\hat{\K}_{\bar x}$, \eqref{kkt1-special} holds true trivially. The proof is completed. \qed
\end{proof}

Next, we extend the lower bound theory for local minimizers of the unconstrained problem \eqref{bianchen} in \cite{Chen10siamoptlowerbound,Lu12iterative} to the {polyhedral constrained} problem \eqref{pnorm}. Suppose that $h(x)$ in \eqref{pnorm} is concave with respect to $\X$. Then, for any given $\I,~\J,~\K$ {satisfying} $\I\cup\J\cup\K=\M,$ the objective function of problem \eqref{psub4} is concave in 
\begin{equation}\label{polytope}
\left\{ x\in \X\,|\,(b-Ax)_{{\I}\cup\K}\leq 0,~(b-Ax)_{{\J}}\geq 0\right\}.
\end{equation}
Therefore, 
all local minimizers of problem \eqref{psub4} must be vertices of the polytope \eqref{polytope} except that there exists an edge direction $d$ connecting two vertices $x_1$ and $x_2$ such that $(Ad)_{\J}=0.$ In the latter case, any convex combination of $x_1$ and $x_2$ is a local minimizer of problem \eqref{psub4}. 


We have the following lower bound theory for local minimizers of problem \eqref{pnorm}.
\begin{theorem}[Lower Bound Theory]\label{thm-lowerbound}
  Suppose {that} $h(x)$ is concave with respect to $x\in \X$ and $\bar x$ is any local minimizer of problem \eqref{pnorm}. Then, for any $m\in\M,$ we have either $(b-A\bar x)_m\leq 0$ or {$(b-A\bar x)_m\geq C,$} where {$C$} is a positive constant that only depends on $A,~b,$ and $\X.$
\end{theorem}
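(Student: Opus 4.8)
The plan is to leverage the structural description already established: by Lemmas~\ref{lemma-easy} and \ref{lemma-local}, $\bar x$ is a local minimizer of problem \eqref{pnorm} if and only if it is a local minimizer of problem \eqref{psub4} with $\I_{\bar x},\J_{\bar x},\K_{\bar x}$ given in \eqref{M1M2M3}. When $h$ is concave on $\X$, the objective of \eqref{psub4} is concave on the polytope \eqref{polytope} (with $\I=\I_{\bar x}$, $\J=\J_{\bar x}$, $\K=\K_{\bar x}$), so the preceding discussion shows that $\bar x$ is either a vertex of that polytope, or lies on an edge $[x_1,x_2]$ of it along which the objective is constant and $(Ad)_{\J_{\bar x}}=0$ for the edge direction $d$. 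In either case I will argue that $\bar x$ can be taken to be a vertex of a polytope of the form \eqref{polytope} for some triple $(\I,\J,\K)$ partitioning $\M$: in the edge case, one of the two endpoints $x_1,x_2$ still has $(b-Ax)_m>0$ for every $m\in\J_{\bar x}$ with $(b-A\bar x)_m>0$, because along the edge those coordinates of $b-Ax$ are constant; so that endpoint is a vertex of the \emph{same} polytope and has the same positive entries of $b-Ax$ as $\bar x$ does. Hence it suffices to bound, away from $0$, the positive entries of $b-Av$ over all vertices $v$ of all polytopes of the form $\{x\in\X : (b-Ax)_{\I\cup\K}\le 0,\ (b-Ax)_{\J}\ge 0\}$ as $(\I,\J,\K)$ ranges over the finitely many ordered partitions of $\M$.

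Next I would make this bound explicit. Fix such a partition and a vertex $v$. Writing $\X=\{x : Gx\le g\}$, the vertex $v$ is the unique solution of a square nonsingular subsystem extracted from the rows of $G$, the equations $(b-Ax)_m=0$ for $m\in\K$, and the active inequalities among $(b-Ax)_{\I}\le 0$ and $(b-Ax)_{\J}\ge 0$. By Cramer's rule every coordinate of $v$ is a ratio of determinants of submatrices built from the entries of $G$ and $A$; consequently each entry $(b-Av)_m = b_m - a_m^Tv$ is a rational number whose numerator and denominator are (up to sign) determinants of integer-or-$(A,b,G,g)$-entry submatrices of bounded size. Therefore any nonzero such entry has absolute value at least $1/D$, where $D$ is an upper bound — depending only on $A$, $b$, and $\X$ (i.e.\ on $G,g$), and not on $v$ or on the partition — for the relevant denominator determinants (obtained, e.g., from Hadamard's inequality). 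Taking $C$ to be this $1/D$, we conclude that for every vertex $v$ of every such polytope and every $m\in\M$, either $(b-Av)_m\le 0$ or $(b-Av)_m\ge C$. Applying this to the vertex representative of $\bar x$ selected in the first paragraph, and recalling that it shares with $\bar x$ the exact set of indices $m$ with $(b-Ax)_m>0$ and the exact values there, gives the claim for $\bar x$.

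The main obstacle I anticipate is the reduction in the edge case: one must verify carefully that when $\bar x$ is interior to an edge $[x_1,x_2]$ with $(Ad)_{\J_{\bar x}}=0$, the quantities $(b-A\bar x)_m$ for $m\in\J_{\bar x}$ are genuinely preserved at a vertex endpoint — equivalently, that the set $\J$ one would attach to that endpoint when viewing it as a vertex of a polytope of type \eqref{polytope} can be chosen to be exactly $\J_{\bar x}$, so that the positive entries of $b-A\bar x$ literally coincide with positive entries of $b$ minus $A$ times a vertex. Once that bookkeeping is settled, the determinant/Cramer estimate is routine and the constant $C$ is manifestly independent of $\bar x$, depending only on $A$, $b$, and the description of $\X$, as asserted. (Note the constant does not depend on $h$ beyond its concavity, nor on $q$; this matches the statement.)
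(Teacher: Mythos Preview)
Your reduction to vertices is exactly the paper's approach, and you are in fact more careful about the edge case than the paper's own proof: the paper flatly asserts that any local minimizer $\bar x$ ``must be a vertex of the polytope \eqref{polytope},'' glossing over the possibility raised just before the theorem that $\bar x$ lies on an edge with $(Ad)_{\J_{\bar x}}=0$. You correctly observe that along such an edge the entries $(b-Ax)_m$ for $m\in\J_{\bar x}$ are constant, so they coincide with those at a vertex endpoint, and that is all that is needed. The ``bookkeeping'' obstacle you anticipate is therefore not a real obstacle.

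The genuine gap is in your second paragraph. Cramer's rule expresses $(b-Av)_m$ as a ratio $N/D'$ of determinants whose entries are built from $A,b,G,g$, and Hadamard's inequality bounds $|D'|\le D$; but this does \emph{not} yield $|(b-Av)_m|\ge 1/D$ unless you also know $|N|\ge 1$. That lower bound on the numerator is available when the data are integers (so that $N$ is a nonzero integer), but for general real $A,b,G,g$ the numerator can be an arbitrarily small nonzero real number, and no $1/D$ conclusion follows. Your phrase ``a rational number'' already signals the slip: nothing in the problem forces the data to be rational.

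The paper sidesteps this entirely. Once the relevant values $(b-Ax)_m>0$ are known to be realized at vertices, it simply observes that each polytope \eqref{polytope} has finitely many vertices and that there are finitely many ordered partitions $(\I,\J,\K)$ of $\M$; hence the set of candidate positive values is finite, and its minimum is the required positive constant $C=C(A,b,\X)$. No explicit formula for $C$ is offered or needed. Your argument is immediately repaired by replacing the Cramer/Hadamard step with this finiteness observation.
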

\begin{proof}
 From Lemma \ref{lemma-easy} and the argument before the theorem, we know that any local minimizer of problem \eqref{pnorm}, $\bar x$, must be a vertex of the polytope \eqref{polytope} with $\I_{\bar x},\,\J_{\bar x},$ and $\K_{\bar x}$ given in \eqref{M1M2M3} and satisfies $(b-A\bar x)_{\J_{\bar x}}>0.$ For any given $\I,~\J,~\K$ satisfying $\I\cup\J\cup\K=\M,$ denote the vertex set of the polytope \eqref{polytope} by $V(A,b,\X,\I,\J,\K)$ and let $$V^{\circ}(A,b,\X,\I,\J,\K):=V(A,b,\X,\I,\J,\K)\bigcap \left\{x\,|\,(b-Ax)_{\J}>0\right\}.$$ Then, the set of local minimizers of problem \eqref{pnorm} must belong to
 $$V(A,b,\X):=\bigcup_{\I\cup\J\cup\K=\M} V^{\circ}(A,b,\X,\I,\J,\K).$$
 Since the polytope \eqref{polytope} has finitely many vertices\cite[Proposition 3.3.3]{bertsekas03convex} and the number of the partition of the set $\M$ is finite, it follows that the set $V(A,b,\X)$ contains finitely many points,
 ~which further implies
$$C(A,b,\X):=\min_{\I\cup\J\cup\K=\M}\min_{x\in V^{\circ}(A,b,\X,\I,\J,\K)}\min\left\{(b-Ax)_{\J}\right\}>0.$$
This shows that the lower bound theory holds true for problem \eqref{pnorm}, i.e., each component of $b-A\bar x$ at any local minimizer $\bar x$ of problem \eqref{pnorm} is either not greater than zero or not less than a constant {$C(A,b,\X).$}  \qed
%
\end{proof}

\section{Smoothing Approximation}\label{sec-smoothapp}

Smoothing approximations for nonsmooth minimization have been extensively studied in \cite{nesterov05mpsmooth,chen12mpsmoothreview,bian13siamoptquadratic,zhang05resursivemax} and {the} references therein. In this section, we propose to use the smooth function 
\begin{equation}\label{thetatmu}\theta(t,\mu)=\left\{\begin{array}{cl}t, &\text{if}~t>\mu;\\
                                                                      \frac{t^2}{2\mu}+\frac{\mu}{2},\quad &\text{if}~0\leq t\leq\mu;\\
                                                                      \frac{\mu}{2}, &\text{if}~t<0 \\
\end{array}\right. \end{equation}
to approximate the max function \begin{equation*}\label{theta}\theta(t)=\max\left\{t,0\right\}.\end{equation*} Based on {\eqref{thetatmu}},
we can construct a smoothing function $\tilde F$ of $F$ and thus a smoothing approximation problem of nonsmooth problem \eqref{pnorm}.

We first summarize some useful properties of $\theta(t,\mu).$ Clearly, for any fixed $\mu>0,$ we have
\begin{equation*}\label{equal=}
\theta(t,\mu)=\theta(t),~\forall~t\geq \mu,
\end{equation*}and
\begin{equation}\label{lowerq}
\theta(t,\mu)\geq\frac{\mu}{2},~\forall~t.
\end{equation}
In addition, $\theta^q(t,\mu)$ is continuously differentiable and twice continuously differentiable everywhere except at the points $t=0$ and $t=\mu.$ The first and second order derivatives of $\theta^q(t,\mu)$ with respect to $t$ are given as follows:
\begin{equation}\label{q1d}\left[\theta^q(t,\mu)\right]'=\left\{\begin{array}{cl}\displaystyle qt^{q-1}, &\text{if}~t>\mu;\\
                                                                      \displaystyle q\theta^{q-1}(t,\mu)\frac{t}{\mu},\quad &\text{if}~0\leq t\leq\mu;\\
                                                                      \displaystyle 0, &\text{if}~t<0,\\
\end{array}\right.\end{equation}

\begin{equation}\label{q2nd}\left[\theta^q(t,\mu)\right]''=\left\{\begin{array}{cl}\displaystyle q(q-1)t^{q-2}, &\text{if}~t>\mu;\\
                                                                      \displaystyle q\left(q-1\right)\theta^{q-2}(t,\mu)\frac{t^2}{\mu^2}+q\theta^{q-1}(t,\mu)\frac{1}{\mu}, \quad &\text{if}~0< t<\mu;\\
                                                                      \displaystyle 0, &\text{if}~t<0.\\
\end{array}\right.\end{equation}

\begin{lemma}\label{lemma-smooth}For any $q\in(0,1)$ and $\mu\in (0,+\infty),$ the following statements {hold true}.
  \begin{description}
    \item [(i)] $0\leq \theta^q(t,\mu)-\theta^q(t)\leq\left(\frac{\mu}{2}\right)^q,~\forall~t\in(-\infty,\mu];$
    \item [(ii)] $\max\left\{|\upsilon|\,|\,\upsilon\in\partial_{t}\left(\left[\theta^q(t,\mu)\right]'\right)\right\}\leq {4q}\mu^{q-2},~\forall~t\in\mathbb{R},$ where
         $\partial_t$ denotes the Clarke generalized gradient with respect to $t$\cite{clarke};
  \item [(iii)]  Define \begin{equation}\label{kappa}
   \displaystyle \kappa(t,\mu)=\left\{
\begin{array}{cl}
{4q}\mu^{q-2},\quad &\mbox{if $-\mu\leq t\leq 2\mu$;}\\
0,&\mbox{otherwise.}\\
\end{array}\right.
  \end{equation} Then
  \begin{equation}\label{q-upperbnd}\theta^q(t,\mu)\leq \theta^q(\hat t,\mu)+\left[\theta^q(\hat t,\mu)\right]'\left(t-\hat t\right)+\frac{\kappa(\hat t,\mu)}{2}\left(t-\hat t\right)^2\end{equation} for any $t$ and $\hat t$ such that $t-\hat t\geq {-\hat t}/{2}$ if $\hat t> 2\mu,$ or $t\in(-\infty,+\infty)$ if $-\mu\leq \hat t\leq  2\mu,$ or $ t-\hat t\leq \mu$ if $\hat t< -\mu.$
  \end{description}
\end{lemma}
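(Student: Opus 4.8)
The plan is to establish the three items in order, since (iii) will draw on (ii). Throughout I will use two elementary properties of the map $z\mapsto z^q$ on $[0,\infty)$: it is nondecreasing, and it is subadditive, i.e.\ $(a+b)^q\le a^q+b^q$ for $a,b\ge 0$ (a consequence of concavity together with $0^q=0$).

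For (i) I would first prove the pointwise sandwich $\theta(t)\le\theta(t,\mu)\le\theta(t)+\tfrac{\mu}{2}$ for every $t\le\mu$. The left inequality follows from inspecting the three branches of \eqref{thetatmu} together with the identity $\tfrac{t^2}{2\mu}+\tfrac{\mu}{2}-t=\tfrac{(t-\mu)^2}{2\mu}\ge 0$; the right inequality uses $\tfrac{t^2}{2\mu}\le\tfrac{t}{2}$ on $[0,\mu]$. Raising the sandwich to the power $q$ and invoking monotonicity and subadditivity yields $0\le\theta^q(t,\mu)-\theta^q(t)\le\bigl(\theta(t)+\tfrac{\mu}{2}\bigr)^q-\theta^q(t)\le\bigl(\tfrac{\mu}{2}\bigr)^q$.

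For (ii) I would bound the generalized second derivative branch by branch. On $(-\infty,0)$ it vanishes. On $(\mu,\infty)$, $|q(q-1)t^{q-2}|=q(1-q)t^{q-2}\le q(1-q)\mu^{q-2}\le q\mu^{q-2}$ because $q-2<0$. On $(0,\mu)$ I would first rewrite \eqref{q2nd}, using $\theta(t,\mu)/\mu=\tfrac{t^2}{2\mu^2}+\tfrac12$, in the compact form $[\theta^q(t,\mu)]''=q\,\theta^{q-2}(t,\mu)\bigl[(q-\tfrac12)\tfrac{t^2}{\mu^2}+\tfrac12\bigr]$; the bracket lies in $(0,\max\{q,\tfrac12\}]\subseteq(0,1)$ for $\tfrac{t^2}{\mu^2}\in[0,1]$, while \eqref{lowerq} and $q-2<0$ give $\theta^{q-2}(t,\mu)\le\bigl(\tfrac{\mu}{2}\bigr)^{q-2}=2^{2-q}\mu^{q-2}$, so the product is at most $q\,2^{2-q}\mu^{q-2}\le 4q\mu^{q-2}$. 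At the kinks $t=0$ and $t=\mu$ one checks that $\theta^q(\cdot,\mu)$ is $C^1$ (the one-sided values of \eqref{q1d} agree), so the Clarke generalized gradient of $[\theta^q(\cdot,\mu)]'$ there is the closed interval spanned by the one-sided limits of the second derivative just computed; each such limit is already dominated by $4q\mu^{q-2}$ in absolute value, hence so is the whole interval. Assembling the branches gives (ii). I expect (ii) — specifically the algebraic reduction on $(0,\mu)$ and the correct handling of the generalized gradient at the two kinks — to be the main technical obstacle; the rest is comparatively mechanical.

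For (iii) I would split according to where $\hat t$ lies, matching the definition of $\kappa$ in \eqref{kappa}. If $-\mu\le\hat t\le 2\mu$, so $\kappa(\hat t,\mu)=4q\mu^{q-2}$, I would use (ii) in the form that $[\theta^q(\cdot,\mu)]'$ is globally Lipschitz with modulus $4q\mu^{q-2}$, whence the integral-remainder estimate gives $\theta^q(t,\mu)-\theta^q(\hat t,\mu)-[\theta^q(\hat t,\mu)]'(t-\hat t)=\int_0^1\bigl([\theta^q(\hat t+s(t-\hat t),\mu)]'-[\theta^q(\hat t,\mu)]'\bigr)(t-\hat t)\,ds\le\int_0^1 4q\mu^{q-2}s(t-\hat t)^2\,ds=2q\mu^{q-2}(t-\hat t)^2=\tfrac{\kappa(\hat t,\mu)}{2}(t-\hat t)^2$, valid for every $t$. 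If $\hat t>2\mu$, so $\kappa(\hat t,\mu)=0$, the hypothesis $t-\hat t\ge-\hat t/2$ forces $t\ge\hat t/2>\mu$, a range on which $\theta^q(\cdot,\mu)=(\cdot)^q$ and $[\theta^q(\hat t,\mu)]'=q\hat t^{q-1}$, so \eqref{q-upperbnd} reduces to the tangent-line inequality $t^q\le\hat t^q+q\hat t^{q-1}(t-\hat t)$ for the concave function $s\mapsto s^q$. If $\hat t<-\mu$, so $\kappa(\hat t,\mu)=0$, the hypothesis $t-\hat t\le\mu$ forces $t\le\hat t+\mu<0$, a range on which $\theta^q(\cdot,\mu)\equiv\bigl(\tfrac{\mu}{2}\bigr)^q$ and $[\theta^q(\hat t,\mu)]'=0$, so \eqref{q-upperbnd} holds with equality. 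This exhausts all cases and completes the proof.
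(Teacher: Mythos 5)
Your proof is correct, and for items (i) and (ii) it follows essentially the paper's own route: (i) via the sandwich $\theta^q(t)\le\theta^q(t,\mu)\le\left(\theta(t)+\frac{\mu}{2}\right)^q\le\theta^q(t)+\left(\frac{\mu}{2}\right)^q$ for $t\le\mu$, and (ii) by bounding the second derivative away from the kinks using $\theta(t,\mu)\ge\mu/2$; your factorization $\left[\theta^q(t,\mu)\right]''=q\,\theta^{q-2}(t,\mu)\left[(q-\tfrac12)\tfrac{t^2}{\mu^2}+\tfrac12\right]$ on $(0,\mu)$ is a clean way to actually reach the constant $4q\mu^{q-2}$, a computation the paper states without detail, and your treatment of the Clarke gradient at $t=0,\mu$ as the interval spanned by one-sided limits matches what the paper implicitly invokes. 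Where you genuinely diverge is (iii): the paper applies the nonsmooth second-order mean-value expansion $\theta^q(t,\mu)=\theta^q(\hat t,\mu)+\left[\theta^q(\hat t,\mu)\right]'(t-\hat t)+\tfrac{\upsilon}{2}(t-\hat t)^2$ with $\upsilon$ a generalized second derivative at an intermediate point, and in each of the three regimes shows that intermediate point stays where $\upsilon\le\kappa(\hat t,\mu)$ (namely $\upsilon\le0$ when $\hat t>2\mu$, $|\upsilon|\le 4q\mu^{q-2}$ when $-\mu\le\hat t\le2\mu$, and $\upsilon=0$ when $\hat t<-\mu$); you instead handle the middle regime by the descent-lemma/integral-remainder estimate coming from global $4q\mu^{q-2}$-Lipschitz continuity of $\left[\theta^q(\cdot,\mu)\right]'$, and reduce the two outer regimes to the explicit form of $\theta^q(\cdot,\mu)$ on the range forced by the hypotheses (the tangent-line inequality for the concave map $s\mapsto s^q$ when $\hat t>2\mu$, exact constancy when $\hat t<-\mu$). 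Both arguments are valid; the paper's is a single-tool proof leaning on Clarke's mean-value theorem, while yours avoids nonsmooth second-order calculus in (iii) at the small cost of one extra (standard) justification you should state explicitly, namely that the bound in (ii) together with continuity of $\left[\theta^q(\cdot,\mu)\right]'$ yields its global Lipschitz continuity with modulus $4q\mu^{q-2}$ (e.g.\ by Lebourg's mean value theorem or a direct piecewise argument across the two kinks).
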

\begin{proof}
  See Appendix C.\qed
\end{proof}
%
%
%


Define \begin{equation}\label{tildeF}\tilde F(x,\mu)=\tilde f(x,\mu)+h(x),\end{equation}where\begin{equation}\label{tildef}\displaystyle \tilde f(x,\mu)=\sum_{m\in\M}\theta^q((b-Ax)_m,\mu).\end{equation}
Based on Lemma \ref{lemma-smooth} and the discussions {beforehand}, we know that $\tilde F(x,\mu)$ is a smoothing function of $F(x)$ and satisfies 
\begin{equation}\label{tildeFin}F(x)\leq \tilde F(x,\mu)\leq F(x)+\sum_{(b-Ax)_m\leq \mu}\left(\frac{\mu}{2}\right)^q,~\forall~x,\end{equation}
and
\begin{equation}\label{tildeg}\nabla\tilde F(x,\mu)=\nabla\tilde f(x,\mu)+\nabla h(x)=-\sum_{m\in\M} \left[\theta^q(t,\mu)\right]'_{t=(b-A x)_m} a_m+\nabla h(x).\end{equation}
Therefore,  \begin{equation}\label{smoothingproblem}
\begin{array}{cl}
\displaystyle \min_{x} & \displaystyle \tilde F(x,\mu) \\
\mbox{s.t.} & \displaystyle  x\in \X
\end{array}
\end{equation} is a smoothing approximation to problem \eqref{pnorm}.

Using essentially the same arguments as in the proof of Theorem \ref{hardness}, we can show the following result.
\begin{theorem}
  For any $q\in(0,1)$ and $\mu>0,$ the smoothing approximation problem \eqref{smoothingproblem} is strongly NP-hard ({even for the special case when $h(x)=0$ and $\X=\mathbb{R}^N$}).
\end{theorem}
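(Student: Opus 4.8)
The plan is to mimic the polynomial-time reduction from the $3$-partition problem in the proof of Theorem~\ref{hardness}, replacing each penalty term $\max\{t,0\}^q$ by its smoothed surrogate $\theta^q(t,\mu)$ from \eqref{thetatmu}, but now composed with a \emph{global rescaling} of the $3$-partition data by a large, polynomially bounded parameter $C$. The rescaling is the only essentially new ingredient, and it is forced by two features in which $\theta(\cdot,\mu)$ differs from $\max\{\cdot,0\}$: by \eqref{lowerq}, $\theta^q(t,\mu)$ equals the fixed positive constant $(\mu/2)^q$ (not $0$) for all $t\le 0$; and the corner of $\max\{\cdot,0\}$ at the origin has been rounded, so the one-dimensional gadget $g_C(t):=\theta^q(t,\mu)+\theta^q(C-t,\mu)$ no longer attains its minimum exactly at $t\in\{0,C\}$. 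Both effects become negligible once $C\gg\mu$: the relative smoothing error is then small and the counting argument of Theorem~\ref{hardness} transfers almost verbatim.

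Concretely, given a $3$-partition instance $(a_1,\dots,a_{3m};B)$, I would fix $C$ to be a suitable constant multiple of $\mu mB$ (polynomially bounded, since $\mu$ is fixed and $3$-partition is strongly NP-complete), keep the $3m^2$ variables $x_{ij}$, and take as objective $S_1(x)+S_2(x)+S_3(x)$, where
\begin{align*}
S_1(x)&=\sum_{i=1}^{3m}\sum_{j=1}^m\bigl(\theta^q(x_{ij},\mu)+\theta^q(C-x_{ij},\mu)\bigr),\\
S_2(x)&=\sum_{i=1}^{3m}\theta^q\Bigl(\sum_{j=1}^m x_{ij}-\tfrac{3}{2}C,\ \mu\Bigr),\\
S_3(x)&=\sum_{j=1}^m\theta^q\Bigl(\bigl(B-\tfrac{1}{2}\bigr)C-\sum_{i=1}^{3m}a_i x_{ij},\ \mu\Bigr).
\end{align*}
Since every argument above is affine in $x$, this is a legitimate instance of \eqref{smoothingproblem} with $h\equiv 0$ and $\X=\mathbb{R}^N$ (equivalently, the smoothed form of the unconstrained problem \eqref{unconstrained}), and the transformation is clearly polynomial time.

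The core of the argument is a one-dimensional lemma: for every $q\in(0,1)$ and all sufficiently large $C$, the gadget $g_C$ attains its global minimum over $\mathbb{R}$ at exactly two points $y^\ast$ and $C-y^\ast$, with $y^\ast\in(0,\mu)$. I would prove this using \eqref{q1d} and a short computation, valid for all $q\in(0,1)$, showing that $[\theta^q(t,\mu)]'$ is increasing on $[0,\mu]$ --- hence $g_C$ is unimodal there --- together with the concavity of $t\mapsto t^q$ on $[\mu,C-\mu]$, the symmetry $g_C(t)=g_C(C-t)$, and an elementary comparison of boundary values that uses $C\gg\mu$ to eliminate all other candidates. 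Granting this lemma, the proof of Theorem~\ref{hardness} carries over line by line: $S_1\ge 3m^2 g_C(y^\ast)$ with equality iff each $x_{ij}\in\{y^\ast,C-y^\ast\}$; $S_2\ge 3m(\mu/2)^q$ and $S_3\ge m(\mu/2)^q$ with equality iff all of their arguments are $\le 0$ (by \eqref{thetatmu}, $\theta^q(t,\mu)=(\mu/2)^q$ iff $t\le 0$); and, writing $z_{ij}\in\{0,1\}$ for the branch occupied by $x_{ij}$, so that $x_{ij}=y^\ast+z_{ij}(C-2y^\ast)$ and $\sum_i a_i x_{ij}=mB\,y^\ast+(C-2y^\ast)\sum_i a_i z_{ij}$ by \eqref{sum}, the thresholds $\tfrac{3}{2}C$ and $(B-\tfrac{1}{2})C$ lie --- precisely because $C$ dominates $\mu mB$ while $0<y^\ast<\mu$ --- in the exact windows making ``argument $\le 0$'' equivalent to ``$\sum_j z_{ij}\le 1$'' and to ``$\sum_i a_i z_{ij}\ge B$'', respectively. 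The counting chain $mB\le\sum_j\sum_i a_i z_{ij}=\sum_i a_i\sum_j z_{ij}\le mB$ from the proof of Theorem~\ref{hardness} then forces equality throughout, so $\{z_{ij}\}$ encodes a genuine $3$-partition (recall $a_i\in(B/4,B/2)$ forces three elements per part). Hence the optimal value of the constructed instance equals $\Theta:=3m^2 g_C(y^\ast)+4m(\mu/2)^q$ iff the $3$-partition instance is a yes-instance, and exceeds $\Theta$ otherwise (a point attaining $\Theta$ would minimize all gadgets simultaneously, hence yield a partition); as in earlier strong NP-hardness results for smoothed $L_q$ problems, a routine quantitative refinement --- bounding the optimal value away from $\Theta$ by an inverse-polynomial amount in the no-instance case --- absorbs the mild point that $\Theta$ involves the (efficiently approximable) number $g_C(y^\ast)$, and strong NP-hardness follows.

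I expect the main obstacle to be exactly the rounded corner of $\theta(\cdot,\mu)$: one must establish the one-dimensional lemma identifying $\arg\min g_C=\{y^\ast,C-y^\ast\}$ and control the location of $y^\ast$ (namely $0<y^\ast<\mu$) tightly enough that, once $C$ outgrows $\mu mB$, the shifted thresholds $\tfrac{3}{2}C$ and $(B-\tfrac{1}{2})C$ still faithfully encode the original combinatorial constraints, and that a single non-extremal $x_{ij}$ or a single violated linear constraint raises the objective above $\Theta$ by a definite margin. Everything else is a faithful transcription of the proof of Theorem~\ref{hardness}.
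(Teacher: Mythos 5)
Your proposal is correct and takes the approach the paper intends: the paper's entire proof is a one-line appeal to ``essentially the same arguments'' as Theorem \ref{hardness}, and your construction is exactly that 3-partition reduction, with the rescaling by $C$, the shifted thresholds, and the one-dimensional gadget lemma supplying the details (the rounded corner, the $(\mu/2)^q$ floor, and the irrational optimal value $\Theta$) that the paper leaves implicit. These additions are sound and in fact necessary---a verbatim transcription with the original $0/1$ gadget and threshold $1$ would break, for instance because the gadget minimizer shifts to some $y^\ast\in(0,\mu)$ (and even to the midpoint when $\mu$ is large)---so your write-up is, if anything, more complete than the paper's own argument.
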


\section{An SSQP Framework and Worst-Case Iteration Complexity Analysis}\label{sec-algorithm}

In this section, we propose a smoothing SQP (SSQP) algorithmic framework for solving problem \eqref{pnorm}. The proposed algorithmic framework (approximately) solves a convex QP at each iteration. The objective function of the QP subproblem is constructed as a local upper bound of the smoothing function $\tilde F(x,\mu)$ in \eqref{tildeF}. 
In the proposed SSQP framework, the smoothing parameter is updated if the residual of the smoothing problem \eqref{smoothingproblem} is {not greater than some} constant (depending on the current smoothing parameter). We shall also analyze the worst-case iteration complexity of the proposed framework.

{Specifically, we construct} a local convex quadratic upper bound of $\tilde F(x,\mu)$ and present the SSQP algorithmic framework for problem \eqref{pnorm} in Subsection \ref{subsec-algorithm}. Then we define the $\epsilon$-KKT point of problem \eqref{pnorm} and analyze the worst-case iteration complexity of the proposed algorithm/framework for obtaining an $\epsilon$-KKT point in Subsection \ref{subsec-analysis}. Finally, we compare the proposed SSQP algorithm/framework with some existing algorithms {in Subsection 5.3.}

\subsection{An SSQP Algorithmic Framework for Problem \eqref{pnorm}}\label{subsec-algorithm}

For any fixed $\mu>0,$ define the quadratic approximation of $\tilde f(\cdot,\mu)$ around $x_k$ as
\begin{equation}\label{quad-upper}
  Q_1(x,x_k,\mu)=\tilde f(x_k,\mu)+\nabla\tilde f(x_k,\mu)^T(x-x_k)+\frac{1}{2}(x-x_k)^T \tilde B(x_k,\mu) (x-x_k),
\end{equation}
where $\tilde f(x,\mu)$ is given in \eqref{tildef},
\begin{equation*}\label{B}
\begin{array}{rl}
  \tilde B(x,\mu)=&A^T\text{Diag}\left(\kappa((b-Ax)_1,\mu),\ldots,\kappa((b-Ax)_M,\mu)\right)A\\[5pt]
          =&\displaystyle\sum_{m\in\M}\kappa((b-Ax)_m,\mu)a_ma_m^T,
\end{array}\end{equation*}and $\kappa(\cdot,\mu)$ is given in \eqref{kappa}. By the definition of $\kappa(\cdot,\mu)$, we have \begin{equation}\label{lambdamax}
  \lambda_{\max}\left(\tilde B(x,\mu)\right)\leq \lambda_{\max}\left(\sum_{m\in\M}4q\mu^{q-2}a_ma_m^T\right)\leq 4q\mu^{q-2}\sum_{m\in\M}\left\|a_m\right\|^2.
\end{equation}
Similarly, define the quadratic approximation of $h(\cdot)$ around $x_k$ as
\begin{equation*}\label{quad-problem2}
  Q_2(x,x_k)=h(x_k)+\nabla h(x_k)^T(x-x_k)+\frac{1}{2}L_h^k\|x-x_k\|^2,
\end{equation*}where $L_h^k>0$ is an estimation of $L_h$ in \eqref{L}. Define
\begin{equation}\label{QQ}
\begin{array}{rl}
  Q(x,x_k,\mu)=&Q_1(x,x_k,\mu)+Q_2(x,x_k).
  \end{array}
\end{equation}

The following lemma {indicates} that the convex quadratic function $Q(x,x_k,\mu)$ in \eqref{QQ} is a local upper bound of the smoothing function $\tilde F(x,\mu)$ defined in \eqref{tildeF} around point $x_k$ as long as \begin{equation}\label{hQ}
  h(x)\leq Q_2(x,x_k)
\end{equation}holds true with $L_h^k>0.$

\begin{lemma}\label{lemma-quad}
For any $x_k$ and $x$ such that
\begin{align}
  \left(A(x_k-x)\right)_m &\leq \mu,~m\in\I_{x_k}^{\mu},\label{trust1}\\
  \left(A(x_k-x)\right)_m &\geq \frac{-\left(b-Ax_k\right)_m}{2},~m\in\J_{x_k}^\mu,\label{trust2}
\end{align}where \begin{equation}\label{Mx}\begin{array}{rcl}\I_{x_k}^{\mu}&=&\left\{m\,|\,\left(b-Ax_k\right)_m<-\mu\right\},\\[5pt]
\J_{x_k}^{\mu}&=&\left\{m\,|\,\left(b-Ax_k\right)_m> 2\mu\right\},\end{array}\end{equation}if \eqref{hQ} holds true with $L_h^k>0,$ then
  \begin{equation}\label{keyineq}
    \tilde F(x,\mu) \leq Q(x,x_k,\mu),
  \end{equation}where $Q(x,x_k,\mu)$ is defined in \eqref{QQ}.
\end{lemma}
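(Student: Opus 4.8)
The plan is to bound $\tilde F(x,\mu) = \tilde f(x,\mu) + h(x)$ termwise. Since by hypothesis $h(x) \le Q_2(x,x_k)$, it suffices to show $\tilde f(x,\mu) \le Q_1(x,x_k,\mu)$, and for this it is enough to establish, for each $m \in \M$, the one-dimensional inequality
\begin{equation*}
\theta^q\bigl((b-Ax)_m,\mu\bigr) \le \theta^q\bigl((b-Ax_k)_m,\mu\bigr) + \bigl[\theta^q(t,\mu)\bigr]'_{t=(b-Ax_k)_m}\bigl((b-Ax)_m - (b-Ax_k)_m\bigr) + \frac{\kappa((b-Ax_k)_m,\mu)}{2}\bigl((b-Ax)_m - (b-Ax_k)_m\bigr)^2,
\end{equation*}
because summing these over $m$ and using $\nabla\tilde f(x_k,\mu) = -\sum_m [\theta^q(t,\mu)]'_{t=(b-Ax_k)_m}a_m$ together with $\tilde B(x_k,\mu) = \sum_m \kappa((b-Ax_k)_m,\mu)a_ma_m^T$ yields exactly $\tilde f(x,\mu) \le Q_1(x,x_k,\mu)$.

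The core tool is Lemma \ref{lemma-smooth}(iii), the quadratic upper bound \eqref{q-upperbnd}, applied with $\hat t = (b-Ax_k)_m$ and $t = (b-Ax)_m$, so that $t - \hat t = (A(x_k-x))_m$. Lemma \ref{lemma-smooth}(iii) requires one of three range conditions on $t - \hat t$ depending on where $\hat t$ lies relative to the thresholds $-\mu$ and $2\mu$. So the proof proceeds by splitting $\M$ according to the value of $(b-Ax_k)_m$: if $(b-Ax_k)_m > 2\mu$, i.e. $m \in \J_{x_k}^\mu$, the required condition is $t - \hat t \ge -\hat t/2$, which is precisely \eqref{trust2}; if $(b-Ax_k)_m < -\mu$, i.e. $m \in \I_{x_k}^\mu$, the required condition is $t - \hat t \le \mu$, which is precisely \eqref{trust1}; and if $-\mu \le (b-Ax_k)_m \le 2\mu$, Lemma \ref{lemma-smooth}(iii) imposes no restriction on $t$ at all, so the inequality holds unconditionally. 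Thus the hypotheses \eqref{trust1}–\eqref{trust2}, with the index sets \eqref{Mx}, are tailored exactly to make the three cases of Lemma \ref{lemma-smooth}(iii) applicable.

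The main (and essentially only) obstacle is verifying this case-to-hypothesis correspondence carefully: one must check that $\I_{x_k}^\mu$ and $\J_{x_k}^\mu$ in \eqref{Mx} match the thresholds $\hat t < -\mu$ and $\hat t > 2\mu$ in Lemma \ref{lemma-smooth}(iii), and that the remaining indices fall in the unconstrained regime $-\mu \le \hat t \le 2\mu$; after that, it is a routine summation. Once the per-coordinate inequalities are in hand, we write
\begin{equation*}
\tilde f(x,\mu) \le \tilde f(x_k,\mu) + \nabla\tilde f(x_k,\mu)^T(x-x_k) + \tfrac12 (x-x_k)^T\tilde B(x_k,\mu)(x-x_k) = Q_1(x,x_k,\mu),
\end{equation*}
add the hypothesis $h(x) \le Q_2(x,x_k)$, and conclude $\tilde F(x,\mu) = \tilde f(x,\mu) + h(x) \le Q_1(x,x_k,\mu) + Q_2(x,x_k) = Q(x,x_k,\mu)$, which is \eqref{keyineq}.
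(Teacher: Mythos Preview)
Your proposal is correct and follows exactly the same approach as the paper: apply Lemma~\ref{lemma-smooth}(iii) coordinatewise with $\hat t=(b-Ax_k)_m$ and $t=(b-Ax)_m$, verify that the hypotheses \eqref{trust1}--\eqref{trust2} match the three range conditions there, sum to obtain $\tilde f(x,\mu)\le Q_1(x,x_k,\mu)$, and then add the assumed bound \eqref{hQ} on $h$. The paper's proof is simply a terser statement of this same argument.
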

\begin{proof}
   Recalling the definition of $Q_1(x,x_k,\mu)$ in \eqref{quad-upper} and treating $(b-Ax)_m$ and $(b-Ax_k)_m$ as $t$ and $\hat t$ in (iii) of Lemma \ref{lemma-smooth}, respectively, we get $\tilde f(x,\mu)\leq Q_1(x,x_k,\mu).$ 
    Combining this, \eqref{QQ}, and \eqref{hQ}, we immediately obtain the desired result \eqref{keyineq}.\qed
\end{proof}

Based on Lemma \ref{lemma-quad}, {we propose our SSQP algorithmic framework for solving problem \eqref{pnorm} in Page \pageref{frameworkpage}.
Some remarks on the proposed SSQP algorithmic framework are in order.}

\begin{table} \label{frameworkpage}
\begin{center}
\framebox{
\begin{minipage}{11.8cm}
\centerline{\bf An SSQP Algorithmic Framework for Problem \eqref{pnorm}}
\vspace{0.05cm}
\textbf{Step 1.}
\textbf{Initialization}. Choose the initial feasible point $x_0$ and the parameters $\epsilon\in(0,1],$ $0<\sigma<1,$ $\eta>1,$ and $L_h^{\max}\geq L_h^0\geq L_h^{\min}>0$ with $L_h^{\max}\geq L_h.$ Set $i=0~(\text{outer~iteration~index}),$~$k=0~(\text{inner~iteration~index}),$ and \begin{equation}\label{parameters}\mu_0=\frac{\epsilon}{\sigma^{\lfloor\log_{\sigma}\epsilon\rfloor}}\in(\sigma,\,1].\end{equation}
\textbf{Step 2.} \textbf{Termination of the inner iteration}. Set $\mu=\mu_i.$ If $x_k$ satisfies \begin{equation}\label{termination}\left\|P_{\X}\left(x_k-\nabla \tilde F\left(x_k,\mu\right)\right)-x_k\right\|\leq \mu,\end{equation} go to \textbf{Step 5}; else go to \textbf{Step 3.}\\[2.5pt]
\textbf{Step 3.} \textbf{Calculating the new iterate}. Let $x_{k+1}$ be an (approximate) solution of the following convex QP \begin{equation}\label{quad-problem}
    \begin{array}{rl}
    \displaystyle \min_{x\in\X} & Q(x,x_k,\mu)\\
    \mbox{s.t.} & 
    \displaystyle\left(A(x_k-x)\right)_m \leq \mu,~m\in\I_{x_k}^\mu,\\
                &\left(A(x_k-x)\right)_m \geq -\frac{\left(b-Ax_k\right)_m}{2},~m\in\J_{x_k}^\mu
    \end{array}
  \end{equation} such that
  \begin{equation}\label{framework-decrease}
    \tilde F(x_k,\mu)-\tilde F(x_{k+1},\mu)\geq O(\mu^{4-q}),
  \end{equation}
  where $Q(x,x_k,\mu)$ is defined in \eqref{QQ} and $\I_{x_k}^\mu, \J_{x_k}^\mu$ are defined in \eqref{Mx}, respectively. Compute $s_{k+1}=x_{k+1}-x_{k},~y_{k+1}=\nabla h(x_{k+1})-\nabla h(x_{k}),$ and \begin{equation}\label{rk}r_k=\frac{h\left(x_{k+1}\right)-h(x_k)-\nabla h(x_k)^T\left(x_{k+1}-x_k\right)}{\frac{1}{2}L_h^k\|x_{k+1}-x_k\|^2}.\end{equation}\\[2.5pt]
\textbf{Step 4.} \textbf{Updating the estimated Lipschitz constant}. If $r_k\leq 1,$ compute $L_h^{k+1}$ by \begin{equation}\label{Lhk}L_h^{k+1}=\max\left\{\min\left\{L_h^{\max},\frac{s_{k+1}^Ty_{k+1}}{\|s_{k+1}\|^2}\right\},L_h^{\min}\right\},\end{equation} set $k=k+1,$ and go to \textbf{Step 2}; else set $$L_h^k=\eta L_h^k,$$ and go to \textbf{Step 3}.\\[2.5pt]
\textbf{Step 5.} \textbf{Termination of the outer iteration.} If $\mu\leq \epsilon,$ terminate the algorithm; else go to \textbf{Step 6.}\\[2.5pt]
\textbf{Step 6.} \textbf{Updating the smoothing parameter}. Set $$\mu_{i+1}=\sigma\mu_i,$$$i=i+1,$ $x_0=x_k,$ $k=0,$ and go to \textbf{Step 2}.
\end{minipage}
}
\end{center}
\end{table}

First, 
to solve {the} nonsmooth problem \eqref{pnorm}, the proposed SSQP framework approximately solves a series of smoothing approximation problems \eqref{smoothingproblem} with decreasing smoothing parameters. The solution accuracy of the smoothing approximation problem \eqref{smoothingproblem} is adaptively controlled by \eqref{termination}.

Second, the convex QP problem \eqref{quad-problem} can be efficiently solved (in an exact manner) by the active-set method or the interior-point method\cite{gould04mpqp,nemirovskimodern,yuannumerical,nocedalnumerical,ye98bookinterior}. In fact, performing a simple shrink projection gradient step for solving problem \eqref{quad-problem} in an inexact fashion is sufficient to guarantee \eqref{framework-decrease} (see Lemma \ref{lemma-sufficient}) and hence the worst-case iteration complexity of the proposed framework (See Theorem \ref{thm-iteration}).

Third, the Lipschitz constant $L_h$, when is unknown, is adaptively updated in \textbf{Step 4}, which is also used in \cite{Lu12iterative,wright09tspsparse,birgin00siamoptbb,bb88,dai02imabb,jiangdai13}. If $L_h$ is known, we can set $L_h^{\max}=L_h^{\min}=L_h^0=L_h$ in the proposed algorithm, and {the} $r_k$ in \eqref{rk} satisfies $r_k\leq 1$ at each iteration. Other adaptive ways of updating $L_h$ can also be found in \cite{cartis11mpcubic1,cartis11mpcubic2,cartis11siamoptcomposite,bian13siamoptquadratic}. However, this will not affect the worst-case iteration complexity order of the proposed framework.

Finally, 
the parameter $\mu_0$ in \eqref{parameters} is chosen such that the final smoothing parameter $\mu$ is equal to $\epsilon$ {once the framework is terminated.
This} simplifies the worst-case iteration complexity analysis, but does not affect the worst-case iteration complexity order.

In the following analysis, we {assume, without loss of generality, that} $F(x)\geq 0$ for all $x\in\X.$ This, together with \eqref{tildeFin}, immediately implies that $\tilde F(x,\mu)$ in \eqref{tildeF} with any $\mu\geq0$ satisfies
\begin{equation}\label{lower0}\tilde F(x,\mu)\geq 0,~\forall~x\in\X,~\forall~\mu\geq 0.\end{equation}

Next, we show that the proposed SSQP framework is well defined and will terminate after finitely many iterations.

\begin{lemma}\label{lemmaK0}
  For any $\mu>0$ and $k\geq 0$, \textbf{Step 3} in the proposed SSQP framework will be executed at most \begin{equation}\label{K0}K_0:=\left\lceil\log_{\eta}\frac{L_h}{L_{h}^{\min}}\right\rceil+1\end{equation} times, i.e., the convex QP in the form of \eqref{quad-problem} with any $\mu>0$ and $k\geq 0$ will be (approximately) solved at most $K_0$ times.
\end{lemma}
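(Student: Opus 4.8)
The plan is to show that within the inner loop, the "line-search"-type mechanism in \textbf{Step 4} that repeatedly multiplies $L_h^k$ by $\eta>1$ cannot be triggered too many times, because once $L_h^k$ exceeds the true Lipschitz constant $L_h$, the acceptance test $r_k\le 1$ in \eqref{rk} is automatically satisfied. First I would recall the standard descent-lemma consequence of \eqref{L}: since $\nabla h$ is $L_h$-Lipschitz on $\X$, for all $x,y\in\X$ one has $h(x)\le h(y)+\nabla h(y)^T(x-y)+\tfrac{L_h}{2}\|x-y\|^2$. Applying this with $x=x_{k+1}$, $y=x_k$ gives
\[
h(x_{k+1})-h(x_k)-\nabla h(x_k)^T(x_{k+1}-x_k)\le \frac{L_h}{2}\|x_{k+1}-x_k\|^2 .
\]
Hence if the current estimate satisfies $L_h^k\ge L_h$, the numerator of $r_k$ in \eqref{rk} is at most $\tfrac{L_h}{2}\|x_{k+1}-x_k\|^2\le\tfrac{L_h^k}{2}\|x_{k+1}-x_k\|^2$, which is exactly the denominator, so $r_k\le1$ and the algorithm proceeds to \textbf{Step 2} rather than re-entering \textbf{Step 3}. (If $x_{k+1}=x_k$ the denominator vanishes, but in that degenerate case the decrease condition \eqref{framework-decrease} forces $x_{k+1}\ne x_k$, so this does not occur; alternatively one treats $r_k\le1$ as vacuously holding.)

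Next I would bound how many multiplications by $\eta$ are needed before the estimate reaches $L_h$. At the start of the inner iteration the estimate $L_h^k$ is at least $L_h^{\min}$ (this is preserved by the update rule \eqref{Lhk}, which projects onto $[L_h^{\min},L_h^{\max}]$, and by the initialization $L_h^0\ge L_h^{\min}$). Each failed test $r_k>1$ replaces $L_h^k$ by $\eta L_h^k$. After $j$ such multiplications the estimate is at least $\eta^j L_h^{\min}$, and as soon as $\eta^j L_h^{\min}\ge L_h$, i.e. $j\ge\log_\eta(L_h/L_h^{\min})$, the test must succeed by the previous paragraph. Therefore the number of failed tests is at most $\lceil\log_\eta(L_h/L_h^{\min})\rceil$, and \textbf{Step 3} is executed at most one more time (the successful one), giving the claimed bound $K_0=\lceil\log_\eta(L_h/L_h^{\min})\rceil+1$. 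I should also note this count is uniform: it does not depend on $\mu$ or on the inner index $k$, because the only quantities entering the argument are the fixed constants $L_h$, $L_h^{\min}$, and $\eta$.

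I do not expect a serious obstacle here — the argument is the classical backtracking-on-the-Lipschitz-constant estimate. The only points requiring a little care are: (a) confirming that $L_h^k\ge L_h^{\min}$ is an invariant across the whole run (checking the three branches: initialization, the $\max\{\min\{\cdot\},L_h^{\min}\}$ in \eqref{Lhk}, and the multiplication by $\eta>1$, none of which can push the value below $L_h^{\min}$), and (b) handling the possible division by zero in \eqref{rk} when $x_{k+1}=x_k$, which I would dispose of via the strict-decrease requirement \eqref{framework-decrease}. Both are routine, so the lemma follows.
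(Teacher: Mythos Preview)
Your proposal is correct and follows essentially the same approach as the paper: both invoke the descent lemma from \eqref{L} to show that once the estimate $L_h^k$ reaches $L_h$ the test $r_k\le 1$ must hold, use the invariant $L_h^k\ge L_h^{\min}$ from \eqref{Lhk}, and count the $\eta$-multiplications needed. Your write-up is in fact a bit more careful than the paper's terse version, in that you explicitly verify the invariant and address the degenerate case $x_{k+1}=x_k$.
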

\begin{proof}
  It follows from \eqref{Lhk} that $L_h^k\geq L_h^{\min}$ for any $k\geq 0.$ Since $$L_h^k\eta^{K_0-1}\geq L_h^{\min}\eta^{K_0-1}\geq L_h,$$ it follows from \eqref{L} that
  {\begin{align*}h(x_{k+1}) &\leq h(x_k)+\nabla h(x_k)^T(x_{k+1}-x_k)+\frac{1}{2}L_h\|x_{k+1}-x_k\|^2\\
  &\leq h(x_k)+\nabla h(x_k)^T(x_{k+1}-x_k)+\frac{1}{2}L_h^k\eta^{K_0-1}\|x_{k+1}-x_k\|^2,
  \end{align*}}which further implies that {the} $r_k$ in \eqref{rk} satisfies $r_k\leq 1.$ {According to \textbf{Step 4} of the SSQP framework, the inner iteration index $k$ will be incremented after solving the convex QP in the form of \eqref{quad-problem} at most $K_0$ times.}  
  \qed
\end{proof}

\begin{lemma}
  For any $k\geq 0$ in the SSQP framework, we have
  \begin{equation}\label{barLh}L_h^k\leq \bar L_h:=\max\left\{L_h^0,\,L_h^{\max},\,\eta L_h\right\}.\end{equation}
\end{lemma}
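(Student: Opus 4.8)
The plan is to establish \eqref{barLh} by an induction that follows the quantity $L_h^k$ through every point of the algorithm at which it can be modified. Inspecting the framework, $L_h^k$ is changed in only two ways: via the assignment $L_h^{k+1}=\max\left\{\min\left\{L_h^{\max},s_{k+1}^Ty_{k+1}/\|s_{k+1}\|^2\right\},L_h^{\min}\right\}$ of \eqref{Lhk} in \textbf{Step 4} (the branch $r_k\le 1$), and via $L_h^k\leftarrow\eta L_h^k$ in \textbf{Step 4} (the branch $r_k>1$); the initialization fixes $L_h^0$ and no other step touches it. So I would prove that the invariant ``$L_h^k\le\bar L_h$'' holds throughout the run.

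The base case is immediate: $L_h^0\le L_h^{\max}\le\bar L_h$ by the parameter choice in \textbf{Step 1}. For the first update rule, since $\min\{L_h^{\max},\cdot\}\le L_h^{\max}$ and $L_h^{\min}\le L_h^{\max}$, the right-hand side of \eqref{Lhk} is at most $L_h^{\max}\le\bar L_h$, independently of the incoming value, so this update can never violate the invariant. The only update that can inflate $L_h^k$ without an obvious ceiling is $L_h^k\leftarrow\eta L_h^k$, and the crux is that this branch is entered only when $r_k>1$.

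To control that branch I would reuse the estimate already employed inside the proof of Lemma \ref{lemmaK0}: by the Lipschitz-gradient property \eqref{L} and the descent lemma, $h(x_{k+1})-h(x_k)-\nabla h(x_k)^T(x_{k+1}-x_k)\le\frac{1}{2} L_h\|x_{k+1}-x_k\|^2$, and since $x_{k+1}\ne x_k$ (otherwise the required decrease \eqref{framework-decrease} is impossible), the definition \eqref{rk} gives $r_k\le L_h/L_h^k$. Consequently $r_k>1$ forces $L_h^k<L_h$, so the value produced by the inflation step is $\eta L_h^k<\eta L_h\le\bar L_h$. Combining the two cases closes the induction and yields \eqref{barLh}.

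I do not anticipate a genuine difficulty; the argument is essentially bookkeeping over the finitely many places at which $L_h^k$ is written. The only two points needing a word of care are (i) recording that the $\eta$-inflation is triggered strictly before $L_h^k$ reaches $L_h$ — exactly the observation underlying Lemma \ref{lemmaK0} — and (ii) the harmless remark that $x_{k+1}\ne x_k$, so that $r_k$ in \eqref{rk} is well defined.
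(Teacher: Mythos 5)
Your proof is correct and follows essentially the same route as the paper: the key observation in both is that the $\eta$-inflation branch is entered only when $r_k>1$, which by the descent estimate from \eqref{L} forces $L_h^k<L_h$, while the update \eqref{Lhk} is capped by $L_h^{\max}$ and the initial value is $L_h^0$. Your version merely makes the bookkeeping explicit as an induction; the added remarks (well-definedness of $r_k$, the cap in \eqref{Lhk}) are harmless refinements of the paper's argument.
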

\begin{proof}
   From \eqref{L}, for any $x\in\X,$ we have
   $$\frac{h\left(x\right)-h(x_k)-\nabla h(x_k)^T\left(x-x_k\right)}{\frac{1}{2}L_h\|x-x_k\|^2}\leq 1.$$ According to \textbf{Step 4} of the SSQP framework, $L_h^k$ is set to be $\eta L_h^k$ only when $r_k>1,$ and in this case there must hold $L_h^k<L_h$. Hence, \eqref{barLh} is true.
\end{proof}

The following Lemma \ref{lemma-sufficient} guarantees {the existence of $x_{k+1}$ satisfying the relation \eqref{framework-decrease}.}

\begin{lemma}\label{lemma-sufficient}
  For any $\mu\in(0,1]$ and $k\geq 0$ in the proposed SSQP framework, suppose that 
  \begin{itemize}
    \item [-] $x_{k+1}^{\text{exact}}$ is the solution of problem \eqref{quad-problem},
    \item [-] $x_{k+1}^{\text{snorm}}$ is the solution of the following problem \begin{equation}\label{quad-problem-framework}
    \begin{array}{rl}
    \displaystyle \min_{x\in\X} & Q(x,x_k,\mu)\\[5pt]
    \mbox{s.t.} & 
    \displaystyle \left\|A\left(x-x_k\right)\right\|_{\infty}\leq \mu,
    \end{array}
  \end{equation}
   \item [-] and \begin{equation}\label{solutionprojection} x_{k+1}^{\text{proj}}=x_k+\xi_k\tau_k d_k,\end{equation} where
   \begin{equation}\label{tauk}\tau_k=\frac{\mu}{\left(\max_m\left\{\|a_m\|\right\}+1\right)\|d_k\|}, \end{equation}
     \begin{equation}\label{univariateqp}\xi_k =\min\left\{\frac{-d_k^T\nabla\tilde F\left(x_k,\mu\right) }{
\tau_k d_k^T\left(\tilde B_k+L_hI_N\right) d_k},1\right\},\end{equation}
and $$d_k=P_{\X}(x_k-\nabla\tilde F\left(x_k,\mu\right))-x_k.$$

  \end{itemize}If \eqref{termination} is not satisfied, then
  \begin{align}
\tilde F(x_k,\mu)-\tilde F(x_{k+1}^{\text{exact}},\mu)\geq\tilde F(x_k,\mu)-\tilde F(x_{k+1}^\text{snorm},\mu)\geq \tilde F(x_k,\mu)-\tilde F(x_{k+1}^\text{proj},\mu)\geq \frac{\mu^{4-q}}{J_0},\label{fdecrease} 
\end{align}where \begin{equation}\label{barJ}J_0=\max\left\{8q\sum_{m}\|a_m\|^2+2\bar L_h,2\max_m\left\{\|a_m\|\right\}+2\right\},\end{equation} and $\bar L_h$ is given in \eqref{barLh}.
\end{lemma}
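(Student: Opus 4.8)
The plan is to establish the chain of inequalities in \eqref{fdecrease} from right to left, since the feasible region of \eqref{quad-problem} contains that of \eqref{quad-problem-framework}, which in turn contains the single point $x_{k+1}^{\text{proj}}$ (once we check feasibility of the latter). The last two inequalities then follow from the fact that $Q(\cdot,x_k,\mu)$ is a local upper bound of $\tilde F(\cdot,\mu)$ by Lemma \ref{lemma-quad}: minimizing $Q$ over a larger set gives a smaller value of $Q$, hence of $\tilde F$ (noting $Q(x_k,x_k,\mu)=\tilde F(x_k,\mu)$). Concretely, first I would verify that $x_{k+1}^{\text{proj}}=x_k+\xi_k\tau_k d_k$ lies in $\X$ (convexity of $\X$, since $x_k\in\X$, $x_k+d_k=P_{\X}(\cdot)\in\X$, and $\xi_k\tau_k\in[0,1]$ for $\mu\le 1$ once $\tau_k\le 1$ — which holds because $\|d_k\|\ge\|A(x_k-(x_k+d_k))\|/\max_m\|a_m\|$ is not immediate, so more carefully $\tau_k\|d_k\|=\mu/(\max_m\|a_m\|+1)\le\mu\le 1$, giving $\tau_k\|d_k\|\le 1$ but we need $\xi_k\tau_k\le 1$, which follows from $\xi_k\le 1$ and $\tau_k\|d_k\|\le 1$ after normalizing by $\|d_k\|$), and that it satisfies the box constraint $\|A(x-x_k)\|_\infty\le\mu$: indeed $\|A\xi_k\tau_k d_k\|_\infty\le\xi_k\tau_k\max_m\|a_m\|\,\|d_k\|\le\tau_k\|d_k\|\max_m\|a_m\|=\mu\max_m\|a_m\|/(\max_m\|a_m\|+1)<\mu$.

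The crux is the third inequality, $\tilde F(x_k,\mu)-\tilde F(x_{k+1}^{\text{proj}},\mu)\ge\mu^{4-q}/J_0$. Here I would apply Lemma \ref{lemma-quad} with $x=x_{k+1}^{\text{proj}}$ (its box feasibility guarantees \eqref{trust1}--\eqref{trust2}) and $L_h^k$ replaced by $L_h$ (legitimate since $L_h^k\le\bar L_h$ and we only need \emph{some} valid upper bound; alternatively invoke \eqref{L} directly for the $h$-part), to get $\tilde F(x_{k+1}^{\text{proj}},\mu)\le Q(x_{k+1}^{\text{proj}},x_k,\mu)$. Then expand $Q$ along the ray: writing $\delta=\xi_k\tau_k$,
\begin{equation*}
Q(x_k+\delta d_k,x_k,\mu)=\tilde F(x_k,\mu)+\delta\, d_k^T\nabla\tilde F(x_k,\mu)+\tfrac{\delta^2}{2}d_k^T(\tilde B_k+L_h I_N)d_k.
\end{equation*}
Using $d_k^T\nabla\tilde F(x_k,\mu)\le -\|d_k\|^2$ (the standard projected-gradient inequality $\langle P_{\X}(y)-x,\,y-P_{\X}(y)\rangle\ge 0$ applied with $y=x_k-\nabla\tilde F(x_k,\mu)$, $x=x_k$), and substituting the exact value of $\xi_k$ from \eqref{univariateqp}, a short computation gives a decrease of at least $\tfrac12\min\{\tau_k\|d_k\|^2, (d_k^T\nabla\tilde F(x_k,\mu))^2/(d_k^T(\tilde B_k+L_hI_N)d_k)\}$. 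Bounding $\|d_k\|\ge\mu$ when \eqref{termination} fails, $\tau_k=\mu/((\max_m\|a_m\|+1)\|d_k\|)$, $\lambda_{\max}(\tilde B_k)\le 4q\mu^{q-2}\sum_m\|a_m\|^2$ from \eqref{lambdamax}, and $L_h\le\bar L_h$, the $\mu^{q-2}$ in the Hessian bound against $\|d_k\|^2\ge\mu^2$ yields exactly the $\mu^{4-q}/J_0$ scaling with $J_0$ as in \eqref{barJ}.

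The main obstacle I anticipate is bookkeeping the two branches of the $\min$ in $\xi_k$ and tracking which $\mu$-powers dominate: in the ``$\xi_k=1$'' branch the decrease is $\ge -\delta d_k^T\nabla\tilde F/2\ge\tau_k\|d_k\|^2/2=\mu\|d_k\|/(2(\max_m\|a_m\|+1))\ge\mu^2/(2(\max_m\|a_m\|+1))$, which is $\ge\mu^{4-q}/(2\max_m\|a_m\|+2)$ since $\mu\le 1$ and $4-q\ge 2$; in the other branch the decrease is $\ge(d_k^T\nabla\tilde F)^2/(2\tau_k d_k^T(\tilde B_k+L_hI_N)d_k)\ge\|d_k\|^4/(2\tau_k\|d_k\|^2(4q\mu^{q-2}\sum_m\|a_m\|^2+\bar L_h))=\|d_k\|^3(\max_m\|a_m\|+1)/(2\mu(4q\mu^{q-2}\sum_m\|a_m\|^2+\bar L_h))$, and after using $\|d_k\|\ge\mu$ and $\mu\le1$ this is bounded below by $\mu^{4-q}/(8q\sum_m\|a_m\|^2+2\bar L_h)$. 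Taking the worse of the two constants gives $J_0$ in \eqref{barJ}. The first two inequalities of \eqref{fdecrease} are then immediate from nested feasible sets and Lemma \ref{lemma-quad}, completing the proof.
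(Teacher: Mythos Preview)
Your plan is essentially the same as the paper's own proof: verify that $x_{k+1}^{\text{proj}}$ is feasible for \eqref{quad-problem-framework} (hence for \eqref{quad-problem}), invoke Lemma \ref{lemma-quad} to pass from $Q$ to $\tilde F$, split into the two cases of \eqref{univariateqp}, and use $\|d_k\|>\mu$ together with \eqref{lambdamax} to extract the $\mu^{4-q}/J_0$ bound; the nested-feasible-region argument for the first two inequalities is identical.

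Two bookkeeping slips to fix. First, your justification of $\xi_k\tau_k\le 1$ is muddled: you need $\tau_k\le 1$, and ``$\tau_k\|d_k\|\le 1$ after normalizing by $\|d_k\|$'' does not give this. The clean argument (which the paper uses) is that failure of \eqref{termination} gives $\|d_k\|>\mu$, whence $\tau_k=\mu/((\max_m\|a_m\|+1)\|d_k\|)<1/(\max_m\|a_m\|+1)\le 1$. Second, in the $\xi_k<1$ branch you introduce a spurious $\tau_k$ in the denominator: substituting $\delta=\xi_k\tau_k=-d_k^T\nabla\tilde F_k/(d_k^T(\tilde B_k+L_hI_N)d_k)$ into the quadratic gives exactly $(d_k^T\nabla\tilde F_k)^2/\bigl(2\,d_k^T(\tilde B_k+L_hI_N)d_k\bigr)$, with no $\tau_k$. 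Your intermediate bound with $1/\tau_k>1$ in front is therefore too large to be a valid lower bound on the decrease; the correct route is $\|d_k\|^2/\bigl(2(4q\mu^{q-2}\sum_m\|a_m\|^2+\bar L_h)\bigr)\ge \mu^2\mu^{2-q}/(8q\sum_m\|a_m\|^2+2\bar L_h)=\mu^{4-q}/(8q\sum_m\|a_m\|^2+2\bar L_h)$, using $\mu\le 1$ so that $\bar L_h\le \bar L_h\mu^{q-2}$. With these two fixes your argument goes through and matches the paper.
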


\begin{proof}For simplicity, {denote $\tilde B(x_k,\mu)$ and $\nabla\tilde F\left({x_k,\mu}\right)$ by $\tilde B_k$ and $\nabla\tilde F_k$, respectively in the proof.} Since $d_k=P_{\X}(x_k-\nabla\tilde F_k)-x_k,$ it follows from the property of projection that
\begin{equation}\label{projectionproperty}
-\nabla\tilde F_k^Td_k\geq \|d_k\|^2\geq 0.
\end{equation} This implies that the $\xi_k$ in \eqref{univariateqp} satisfies $\xi_k\in[0,1].$

Next, we first show the last inequality $\tilde F(x_k,\mu)-\tilde F(x_{k+1}^\text{proj},\mu)\geq {\mu^{4-q}}/{J_0}$ in \eqref{fdecrease} holds true.


We claim that $x_k\left(\xi\right):=x_k+\xi\tau_k d_k$ 
is feasible to problem \eqref{quad-problem} for all $\xi\in[0,1].$  
First of all, since \eqref{termination} is not satisfied, we have \begin{equation}\label{termination2}\left\|d_k\right\|> \mu.\end{equation}
By this and \eqref{tauk}, we get {\begin{align*}
  \tau_k\leq&\frac{\mu}{\left(\max_m\left\{\|a_m\|\right\}+1\right)\mu} =  \frac{1}{\max_m\left\{\|a_m\|\right\}+1}  \leq  1.
\end{align*}}Hence, $x_k\left(\xi\right)=\left(1-\xi\tau_k\right)x_k+\xi\tau_k P_{\X}(x_k-\nabla\tilde F_k)$ is a convex combination of $x_k\in \X$ and $P_{\X}(x_k-\nabla\tilde F_k)\in \X$.
By the convexity of $\X,$ we have
\begin{equation}\label{feasible1}x_k(\xi)\in \X,~\forall~\xi\in[0,1].\end{equation} Moreover, by the definition \eqref{tauk} of $\tau_k,$ 
we have
\begin{equation}\label{feasible2}\left|\left(A(x_k- x_k(\xi))\right)_m\right|=\left|a_m^T(x_k-x_k(\xi))\right|=\xi\tau_k|a_m^T d_k|\leq \xi\tau_k\left\|a_m\right\| \left\|d_k\right\|\leq \mu\end{equation} for all $\xi\in[0,1]$ and $m\in\M.$ This shows that
 $x_k\left(\xi\right)$ satisfies \eqref{trust1} and \eqref{trust2} for all $\xi\in[0,1]$. Hence, $x_k\left(\xi\right)$ is feasible to problem \eqref{quad-problem} for all $\xi\in[0,1]$ and hence $x_{k+1}^\text{proj}$ in \eqref{solutionprojection} (due to $\xi_k\in[0,1]$).

Now, we consider the univariate box constrained QP problem \begin{equation}\label{oneqp}\xi_k =\arg\min_{0\leq \xi\leq 1} Q(x_k+\xi \tau_k d_k,x_k,\mu),\end{equation} which admits a closed-form solution \eqref{univariateqp}. 
We first consider the case $\xi_k=1,$ which implies that ${-\nabla \tilde F_k^T d_k}{}\geq
\tau_k d_k^T\left(\tilde B_k+L_hI_N\right) d_k.$ Here, we have
\begin{equation}\label{decrease1}
\begin{array}{rl}
  \tau_k\nabla \tilde F_k^T d_k+\frac{1}{2}\tau_k^2d_k(\tilde B_k+L_hI_N)d_k \leq   \displaystyle\frac{\tau_k}{2}\nabla \tilde F_k^T d_k                                                                 \leq\displaystyle-\frac{\tau_k}{2}\|d_k\|^2 =\displaystyle\frac{-\mu\|d_k\|}{2\left(\max\left\{\|a_m\|\right\}+1\right)},
\end{array}\end{equation}where the second inequality is due to \eqref{projectionproperty}.
For the other case where $\xi_k=\frac{-\nabla \tilde F_k^T d_k}{
\tau_k d_k^T\left(\tilde B_k+L_hI_N\right) d_k}$, we have
\begin{equation}\label{decrease2}
\begin{array}{rl}\displaystyle\xi_k\tau_k\nabla \tilde F_k^T d_k+\frac{1}{2}\xi_k^2\tau_k^2d_k(\tilde B_k+L_hI_N)d_k=&\displaystyle-\frac{\left(\nabla \tilde F_k^T d_k\right)^2}{2 d_k^T\left(\tilde B_k+L_hI_N\right) d_k}\\[15pt]
                                                                 \leq & \displaystyle-\frac{\|d_k\|^4}{2\lambda_{\max}\left(\tilde B_k+L_hI_N\right)\|d_k\|^2} \\[15pt]
                                                                 \leq & \displaystyle-\frac{\|d_k\|^2\mu^{2-q}}{2\left(4q\sum_{m}\|a_m\|^2+L_h\right)},
\end{array}\end{equation}where the first inequality is due to \eqref{projectionproperty} and the last inequality is due to \eqref{lambdamax}. Combining \eqref{keyineq}, \eqref{decrease1}, and \eqref{decrease2}, we obtain
\begin{align*}
\tilde F(x_k,\mu)-\tilde F(x_{k+1}^{\text{proj}},\mu)=& \tilde F(x_k,\mu)-\tilde F(x_{k}+\xi_k\tau_kd_k,\mu)\\
\geq& \tilde F(x_k,\mu)-Q(x_{k}+\xi_k\tau_kd_k,\mu)\\
=&-\xi_k\tau_k\nabla \tilde F_k^T d_k-\frac{1}{2}\xi_k^2\tau_k^2d_k(\tilde B_k+L_h)d_k\\
\geq & \min\left\{\frac{\mu\|d_k\|}{2\left(\max\left\{\|a_m\|\right\}+1\right)},\frac{\|d_k\|^2\mu^{2-q}}{2\left(4q\sum_{m}\|a_m\|^2+L_h\right)}\right\}.
\end{align*}This, together with \eqref{barLh}, \eqref{barJ}, and \eqref{termination2}, immediately implies the desired result $$\tilde F(x_k,\mu)-\tilde F(x_{k+1}^\text{proj},\mu)\geq {\mu^{4-q}}/{J_0}.$$

 Now we show the first two inequalities in \eqref{fdecrease}. From the above analysis (cf. \eqref{feasible1} and \eqref{feasible2}), we know that $x_{k+1}^\text{proj}$ is feasible to problem \eqref{quad-problem-framework}. Since $x_{k+1}^{snorm}$ is the solution of problem \eqref{quad-problem-framework}, it follows that $$\tilde F(x_k,\mu)-\tilde F(x_{k+1}^\text{snorm},\mu)\geq \tilde F(x_k,\mu)-\tilde F(x_{k+1}^\text{proj},\mu).$$ Moreover, since the feasible region of problem \eqref{quad-problem-framework} is a subset of the one of problem \eqref{quad-problem}, we immediately get $$\tilde F(x_k,\mu)-\tilde F(x_{k+1}^{\text{exact}},\mu)\geq\tilde F(x_k,\mu)-\tilde F(x_{k+1}^\text{snorm},\mu).$$
The proof is completed.\qed

\end{proof}

As shown in Lemma \ref{lemma-sufficient}, for the next iterate $x_{k+1}$ to achieve a decrease of order $O(\mu^{4-q})$ as required in \eqref{framework-decrease}, problem \eqref{quad-problem} is not necessarily to be solved in an exact manner; a simple shrink projection gradient step, i.e., $x_{k+1}^{\text{proj}}=x_k+\xi_k\tau_kd_k,$ suffices to satisfy \eqref{fdecrease}. This gives the flexibility to choose subroutines for solving problem \eqref{quad-problem} inexactly.

It is also worthwhile remarking that Lemma \ref{lemma-sufficient} holds true for any convex set $\X$, which is not necessarily a polyhedron. If $\X=\mathbb{R}^N,$ then problem \eqref{quad-problem-framework} is a trust region subproblem with a scaled infinity norm constraint. The infinity norm in \eqref{quad-problem-framework} could be replaced by the Euclidean norm, and the solution to the corresponding counterpart still satisfies \eqref{framework-decrease}. 

Without loss of generality, we focus on analyzing the SSQP framework when {the} $x_{k+1}$ in \textbf{Step 3} is chosen to be $x_{k+1}^{\text{proj}}$ in \eqref{solutionprojection} in the rest part of this section.

The following lemma {states that} the inner loop termination criterion \eqref{termination} of the SSQP framework {can be} satisfied after finite number of iterations.


\begin{lemma}\label{lemma-iteration-unknown}
  Let $x_{k+1}=x_{k+1}^{\text{proj}}$ in the proposed SSQP framework. 
Then, for any $\mu\in(0,1]$, \textbf{Step 2} of the proposed SSQP framework will be executed at most $$\left\lceil \tilde F(x_0,1) J_0 \mu^{q-4}\right\rceil$$ times,
   and the inner termination criterion \eqref{termination} is satisfied after at most $$\left\lceil \tilde F(x_0,1)J_0 K_0 \mu^{q-4}\right\rceil$$ iterations, where $J_0$ and $K_0$ are given in \eqref{barJ} and \eqref{K0}, respectively.

\end{lemma}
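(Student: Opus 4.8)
The plan is to run the standard ``sufficient decrease plus bounded below'' telescoping argument on the smoothing objective $\tilde F(\cdot,\mu)$ along the inner loop, and then multiply by the per-step bound $K_0$ from Lemma~\ref{lemmaK0}.

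First I would record the elementary monotonicity $\tilde F(x_0,\mu)\le\tilde F(x_0,1)$ for every $\mu\in(0,1]$. This holds because, for each fixed $t$, the map $\mu\mapsto\theta(t,\mu)$ in \eqref{thetatmu} is nondecreasing on $(0,+\infty)$: on $\{t<0\}$ it equals $\mu/2$; on $\{0\le t\le\mu\}$ its derivative in $\mu$ is $\tfrac12-\tfrac{t^{2}}{2\mu^{2}}\ge0$ since $t\le\mu$; and on $\{t>\mu\}$ it is the constant $t$. Consequently $\theta^{q}(t,\mu)$, and hence $\tilde f(x,\mu)$ in \eqref{tildef} and $\tilde F(x,\mu)$ in \eqref{tildeF}, are nondecreasing in $\mu$, which gives the claim.

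Next I would count the inner iterations. Fix $\mu\in(0,1]$, take $x_{k+1}=x_{k+1}^{\text{proj}}$ as in the framework, and let $K$ be the number of inner iterations performed before the termination criterion \eqref{termination} is first met; then \eqref{termination} fails for $k=0,1,\dots,K-1$, so Lemma~\ref{lemma-sufficient} (the last inequality in \eqref{fdecrease}) gives $\tilde F(x_k,\mu)-\tilde F(x_{k+1},\mu)\ge \mu^{4-q}/J_0$ for those $k$. Telescoping and then using $\tilde F(x_K,\mu)\ge0$ from \eqref{lower0} together with $\tilde F(x_0,\mu)\le\tilde F(x_0,1)$ yields
\[
K\,\frac{\mu^{4-q}}{J_0}\ \le\ \tilde F(x_0,\mu)-\tilde F(x_K,\mu)\ \le\ \tilde F(x_0,1),
\]
so $K\le \tilde F(x_0,1)J_0\,\mu^{q-4}$. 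Since \textbf{Step 2} is executed once for each inner index $k=0,1,\dots,K$, it is executed at most $\lceil \tilde F(x_0,1)J_0\,\mu^{q-4}\rceil$ times. Finally, Lemma~\ref{lemmaK0} states that for each fixed $k$ the convex QP \eqref{quad-problem} (i.e.\ \textbf{Step 3}) is solved at most $K_0$ times before $k$ is incremented, so the total number of iterations (QP solves) needed until \eqref{termination} holds is at most $K\,K_0\le\lceil \tilde F(x_0,1)J_0K_0\,\mu^{q-4}\rceil$, as claimed.

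The crux of the argument is really already contained in Lemma~\ref{lemma-sufficient}: the uniform sufficient-decrease bound \eqref{fdecrease} whose constant $1/J_0$ does not degrade with $k$ or as $\mu\to0$; here it is just invoked. The only genuine bookkeeping subtlety is that, inside the full algorithm, the point ``$x_0$'' that starts the $\mu_i$-inner loop is the last iterate of the previous outer loop rather than the original initial point; but since $\tilde F(\cdot,\mu)$ is nonincreasing along the inner iterations and nondecreasing in $\mu$, that starting value is still dominated by $\tilde F(x_0,1)$ evaluated at the original $x_0$, so the displayed bounds are unaffected, and any residual rounding is absorbed by the ceilings.
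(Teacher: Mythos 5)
Your proof is correct and follows essentially the same route as the paper's: telescope the uniform sufficient decrease $\mu^{4-q}/J_0$ from Lemma~\ref{lemma-sufficient} over the inner iterations where \eqref{termination} fails, use the lower bound \eqref{lower0} together with $\tilde F(x_0,\mu)\le\tilde F(x_0,1)$, and then multiply by the per-index bound $K_0$ from Lemma~\ref{lemmaK0}. Your explicit verification of the monotonicity of $\theta(\cdot,\mu)$ in $\mu$ and the remark about the inner loop's restarted initial point are details the paper leaves implicit, but they do not change the argument.
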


\begin{proof}
  For any $\mu\in(0,1]$, we use $\hat k$ to denote the first inner iteration index such that $x_{\hat k}$ satisfies \eqref{termination}. Then \eqref{termination2} holds true for all $k\leq\hat k-1.$ Combining this and Lemma \ref{lemma-sufficient}, we obtain
   $$\tilde F(x_0,\mu)-\tilde F(x_{\hat k},\mu)=\sum_{k=1}^{\hat k} \left(\tilde F(x_{k-1},\mu)-\tilde F(x_{k},\mu)\right) \geq \hat k \frac{\mu^{4-q}}{J_0}.$$
   By using \eqref{lower0}, we conclude 
    $$\hat k \leq  \tilde F(x_0,\mu)J_0 \mu^{q-4}\leq \tilde F(x_0,1)J_0 \mu^{q-4}.$$
   This, together with Lemma \ref{lemmaK0}, immediately implies the second statement of Lemma \ref{lemma-iteration-unknown}. \qed
\end{proof}

Now, we are ready to show that the proposed SSQP framework terminates after finite number of iterations.

\begin{theorem}\label{thm-iteration}
Let $x_{k+1}=x_{k+1}^{\text{proj}}$ in the proposed SSQP framework. Then, for any $\epsilon\in(0,1],$ the framework will terminate within at most
  \begin{equation}\label{com-func}\left\lceil J_{T}^q \epsilon^{q-4}\right\rceil\end{equation} iterations,~
where \begin{equation}\label{JT}\displaystyle J_{T}^q=\frac{\sigma^{q-4}\left(\tilde F(x_0,1)J_0K_0+1\right)}{{\sigma^{q-4}-1}}, \end{equation} and $K_0$ and $J_0$ are defined in \eqref{K0} and \eqref{barJ}, respectively.
\end{theorem}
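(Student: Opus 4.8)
The plan is to count the total number of inner iterations (equivalently, the total number of convex QP subproblems of the form \eqref{quad-problem} that are solved) across all outer iterations, using Lemma \ref{lemma-iteration-unknown} as the per-outer-iteration bound and then summing a geometric series. First I would recall that, by the choice of $\mu_0$ in \eqref{parameters} and the update $\mu_{i+1}=\sigma\mu_i$ in \textbf{Step 6}, the sequence of smoothing parameters is $\mu_i=\mu_0\sigma^i\in(\sigma,1]\cdot\sigma^i$, and that the algorithm terminates (in \textbf{Step 5}) at the first outer index $i_T$ for which $\mu_{i_T}\leq\epsilon$. Since $\mu_0\in(\sigma,1]$ and $\mu_0\sigma^{\lfloor\log_\sigma\epsilon\rfloor}=\epsilon$ by construction, the terminal outer index satisfies $i_T=\lfloor\log_\sigma\epsilon\rfloor$ and the terminal smoothing parameter is exactly $\mu_{i_T}=\epsilon$; this is the point of the slightly unusual choice of $\mu_0$, as remarked after the framework.

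Next I would bound the work done in outer iteration $i$. By Lemma \ref{lemma-iteration-unknown} applied with $\mu=\mu_i\in(0,1]$, the inner termination criterion \eqref{termination} is met after at most $\lceil \tilde F(x_0,1)J_0K_0\,\mu_i^{q-4}\rceil$ inner iterations; note that here $x_0$ refers to the \emph{current} outer loop's starting point, but since $\tilde F(x_k,\mu)$ is nonincreasing along inner iterations (by \eqref{framework-decrease}) and $\tilde F$ is nondecreasing in $\mu$ (by \eqref{tildeFin}), the warm-started value $\tilde F(x_0,\mu_i)$ at the start of outer iteration $i$ is bounded above by $\tilde F$ at the very first initial point, hence by $\tilde F(x_0,1)$; so the bound $\lceil \tilde F(x_0,1)J_0K_0\,\mu_i^{q-4}\rceil$ is uniform over all outer iterations with $x_0$ the original initialization. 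Summing over $i=0,1,\dots,i_T$ and using $\mu_i=\mu_{i_T}\sigma^{-(i_T-i)}=\epsilon\,\sigma^{-(i_T-i)}$, the total number of inner iterations is at most
\begin{equation*}
\sum_{i=0}^{i_T}\left(\tilde F(x_0,1)J_0K_0\,\mu_i^{q-4}+1\right)
\leq \left(\tilde F(x_0,1)J_0K_0+1\right)\sum_{i=0}^{i_T}\mu_i^{q-4},
\end{equation*}
and since $q-4<0$, we have $\mu_i^{q-4}=\epsilon^{q-4}\sigma^{(q-4)(i_T-i)}$ with exponent $(q-4)(i_T-i)\leq 0$, so $\sum_{i=0}^{i_T}\mu_i^{q-4}\leq \epsilon^{q-4}\sum_{j=0}^{\infty}\bigl(\sigma^{q-4}\bigr)^{-j}$. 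The ratio $\sigma^{-(q-4)}=\sigma^{4-q}\in(0,1)$ since $\sigma\in(0,1)$ and $4-q>0$, so the geometric series converges to $\frac{1}{1-\sigma^{4-q}}=\frac{\sigma^{q-4}}{\sigma^{q-4}-1}$. Putting these together yields the total iteration count bound $\bigl\lceil J_T^q\,\epsilon^{q-4}\bigr\rceil$ with $J_T^q=\dfrac{\sigma^{q-4}\left(\tilde F(x_0,1)J_0K_0+1\right)}{\sigma^{q-4}-1}$, which is exactly \eqref{com-func}--\eqref{JT}.

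The main obstacle I anticipate is the bookkeeping around the reuse of the symbol $x_0$ across outer iterations and making rigorous that the warm-start value $\tilde F(x_0,\mu_i)$ at the beginning of each outer iteration is genuinely bounded by $\tilde F(x_0,1)$ evaluated at the \emph{original} initialization: this requires chaining the monotonicity of $\tilde F(\cdot,\mu)$ along the inner Step-3 iterations (from \eqref{framework-decrease}) with the monotonicity of $\tilde F(x,\cdot)$ in $\mu$ from \eqref{tildeFin}, across the parameter decrease in Step 6, and with the observation that $\mu_i\le 1$ throughout. A secondary, purely arithmetic point that needs care is the handling of the ceiling functions: summing $i_T+1$ terms each of the form $\lceil c\,\mu_i^{q-4}\rceil \le c\,\mu_i^{q-4}+1$ and absorbing the $+1$'s into the $(\tilde F(x_0,1)J_0K_0+1)$ factor via the geometric sum, then taking a single outer ceiling at the end. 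Once these two points are handled, the geometric-series estimate and identification of the constant $J_T^q$ are routine.
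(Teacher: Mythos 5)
Your proposal is correct and follows essentially the same route as the paper: apply the per-$\mu$ inner-iteration bound of Lemma \ref{lemma-iteration-unknown} at each outer iteration, use $\mu_{I_0}=\epsilon$ from the choice \eqref{parameters}, absorb the ceilings, and sum the geometric series with ratio $\sigma^{4-q}$ to identify $J_T^q$; your explicit chaining of the warm-start bound $\tilde F(x_0^{(i)},\mu_i)\le \tilde F(x_0,1)$ across outer iterations is a point the paper leaves implicit. The only blemish is a sign slip in the intermediate exponent (it should read $\mu_i^{q-4}=\epsilon^{q-4}\sigma^{(4-q)(i_T-i)}$), but your subsequent geometric-series bound and the final constant are exactly right.
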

\begin{proof}
  Define $I_0=\left\lfloor \log_{\sigma}\epsilon\right\rfloor.$ According to the SSQP framework, we have \begin{equation}\label{mu}\mu_i=\mu_0\sigma^i\geq \mu_0\sigma^{I_0}=\epsilon,~\forall~i=0,1,\dots,I_0.\end{equation} In particular, we have
\begin{equation}\label{muI0}
\mu_{I_0}=\mu_0\sigma^{I_0}=\epsilon.
\end{equation} By \eqref{mu} and Lemma \ref{lemma-iteration-unknown}, for any fixed $\mu>0,$ the number of iterations that the SSQP framework takes to return a point satisfying \eqref{termination} is at most
$$\left\lceil \tilde F(x_0,1)J_0K_0 \mu^{q-4}\right\rceil=\left\lceil \tilde F(x_0,1)J_0K_0 \left(\mu_0\sigma^i\right)^{q-4}\right\rceil.$$
 Therefore,
the total number of iterations for the proposed framework to terminate is at most
$$\sum_{i=0}^{I_0} \left\lceil \tilde F(x_0,1)J_0K_0 \left(\mu_0\sigma^i\right)^{q-4}\right\rceil\leq \left(\tilde F(x_0,1)J_0 K_0+1\right)\mu_0^{q-4}\frac{\sigma^{\left(q-4\right)\left(I_0+1\right)}-1}{\sigma^{q-4}-1} \leq J_{T}^q \epsilon^{q-4},$$ where the last inequality is due to \eqref{JT} and \eqref{muI0}. \qed
\end{proof}

The worst-case iteration complexity function in \eqref{com-func} is a strictly decreasing function with respect to $q\in(0,1)$ for fixed $\epsilon\in(0,1).$
This is intuitive because problem \eqref{pnorm} becomes more difficult to solve as $q$ decreases.


\subsection{Worst-Case Iteration Complexity Analysis}\label{subsec-analysis}
In this subsection, we show that the point returned by the proposed SSQP framework is an $\epsilon$-KKT point of problem \eqref{pnorm}. 
To do this, we need to give the definition of the $\epsilon$-KKT point first. Our definition of the $\epsilon$-KKT point of problem \eqref{pnorm} is given as follows, which is a perturbation of the KKT optimality conditions in Theorem \ref{thm-optcondition}. 

\begin{definition}[$\epsilon$-KKT point]\label{definitione}
  For any given $\epsilon>0,\,$ $\bar x\in\X$ is called an $\epsilon$-KKT point of problem \eqref{pnorm} if there exists 
  $\bar\lambda\geq0\in\mathbb{R}^{|\K_{\bar x}^{\epsilon}|}$ such that
  \begin{equation}\label{complementary}
    \left|\bar\lambda_m(b-A\bar x)_m\right|\leq \epsilon^q,~m\in \K_{\bar x}^{\epsilon}
  \end{equation} and
  \begin{equation}\label{ekkt1}
      \left\|\bar x-P_{\X}\left(\bar x-\nabla L^{\epsilon}(\bar x,\bar\lambda)\right)\right\| \leq \epsilon,
    \end{equation}
    where \begin{equation}\label{Lagepsilon}\begin{array}{rl}L^{\epsilon}(x,\lambda)=&\displaystyle \sum_{m\in\J_{\bar x}^{\epsilon}}(b-Ax)_m^q+h(x)\displaystyle +\sum_{m\in\K_{\bar x}^{\epsilon}}\lambda_m(b-Ax)_m\end{array}\end{equation} 
    with
    \begin{equation}\label{barM}
  \begin{array}{rcl}
 \I_{\bar x}^{\epsilon}&=&\left\{m\,|\,(b-A\bar x)_m<-\epsilon\right\},\\
  \J_{\bar x}^{\epsilon}&=&\left\{m\,|\,(b-A\bar x)_m>\epsilon\right\},\\
  \K_{\bar x}^{\epsilon}&=&\left\{m\,|\,-\epsilon\leq (b-A\bar x)_m\leq \epsilon\right\}.\end{array}\end{equation}
\end{definition}

Notice that if $\epsilon=0$ in \eqref{complementary}, \eqref{ekkt1}, and \eqref{barM}, 
then the $\epsilon$-KKT point in Definition \ref{definitione} reduces to the KKT point of problem \eqref{pnorm} (cf. Theorem \ref{thm-optcondition}).

The following definition of the $\epsilon$-KKT point for problem {\eqref{chenniu}} has been used in \cite{chen13siamopttrust,bianchen14mpfeasible}.
\begin{definition}\label{def:ekktother}
  For any $\epsilon\in(0,1],$ $\bar x$ is called an $\epsilon$-KKT point of problem \eqref{chenniu} if it satisfies \begin{equation}\label{def-ekkt-other}\left\|\left(Z_{\bar x}^{\epsilon}\right)^T\nabla F_{{\bar x}}^{\epsilon}(\bar x)\right\|_{\infty}\leq \epsilon,\end{equation}where
  \begin{equation*}\label{reducedFepsilon}F_{{\bar x}}^{\epsilon}(x)=\sum_{\left|a_m^T{\bar x}\right|> \epsilon} \left|a_m^Tx\right|^q+h(x)\end{equation*} 
  and $Z_{\bar x}^{\epsilon}$ is the matrix whose columns form an orthogonal basis for the null space of $\left\{a_m\,|\,\left|a_m^T\bar x\right|\leq \epsilon\right\}.$
\end{definition}

Our definition of the $\epsilon$-KKT point in Definition \ref{definitione} reduces to Definition \ref{definitionereduce} when problem \eqref{pnorm} reduces to problem \eqref{chenniu}.

\begin{definition}\label{definitionereduce}
  For any given $\epsilon>0,\,$ $\bar x$ is called an $\epsilon$-KKT point of problem \eqref{chenniu} if there exists 
  $\bar\lambda\in\mathbb{R}^{|\hat\K_{\bar x}^{\epsilon}|}$ such that
  \begin{equation}\label{complementaryreduce}
    \left|\bar\lambda_ma_m^T\bar x\right|\leq \epsilon^q,~m\in \hat\K_{\bar x}^{\epsilon}
  \end{equation} and
  \begin{equation}\label{ekkt1reduce}
      \left\|\nabla \hat L^{\epsilon}(\bar x,\bar\lambda)\right\| \leq \epsilon,
    \end{equation}where \begin{equation*}\begin{array}{rl}\hat L^{\epsilon}(x,\lambda)=&\displaystyle \sum_{m\in\hat\I_{\bar x}^{\epsilon}}(-a_m^Tx)^q+\sum_{m\in\hat\J_{\bar x}^{\epsilon}}(a_m^Tx)^q+h(x)\displaystyle +\sum_{m\in\hat\K_{\bar x}^{\epsilon}}\lambda_m(b-Ax)_m\end{array}\end{equation*} 
    with
    \begin{equation}\label{hatMspe}
  \begin{array}{rcl}
 \hat\I_{\bar x}^{\epsilon}&=&\left\{m\,|\,a_m^T\bar x<-\epsilon\right\},\\
  \hat\J_{\bar x}^{\epsilon}&=&\left\{m\,|\,a_m^T\bar x>\epsilon\right\},\\
  \hat\K_{\bar x}^{\epsilon}&=&\left\{m\,|\,-\epsilon\leq a_m^T\bar x\leq \epsilon\right\}.\end{array}\end{equation}
\end{definition}

\begin{remark}\label{remark-ekkt}
   The $\epsilon$-KKT point for problem \eqref{chenniu} in Definition \ref{definitionereduce} is stronger than the one in Definition \ref{def:ekktother}. On one hand, it is clear that \eqref{ekkt1reduce} implies \eqref{def-ekkt-other}. On the other hand, if \eqref{def-ekkt-other} is true, then there must exist $\bar \lambda$ such that \eqref{ekkt1reduce} is satisfied\footnote{Here, the differences between two norms ($\|\cdot\|_{\infty}$ in \eqref{def-ekkt-other} and $\|\cdot\|$ in \eqref{ekkt1reduce}) are neglected.}. However, this $\bar \lambda$ does not necessarily satisfy \eqref{complementaryreduce}.
\end{remark}

In the next, we show that the point returned by the proposed SSQP framework is an $\epsilon$-KKT point of problem \eqref{pnorm} defined in Definition \ref{definitione}.

For any given $\epsilon\in(0,1],$ let $\bar x$ be the point returned by the proposed SSQP framework. When the framework is terminated, there holds $\mu=\epsilon$ (cf. \eqref{muI0}) Then, it follows from \eqref{termination} that $\bar x\in \X$ satisfies \begin{equation}\label{terminatione}\left\|\nabla \tilde F\left(\bar x,\epsilon\right)\right\|\leq \epsilon.\end{equation} Define $\I_{\bar x}^\epsilon,\J_{\bar x}^\epsilon,$ and $\K_{\bar x}^\epsilon$ as in \eqref{barM},~
and
\begin{equation}\label{barlambda}\displaystyle \bar\lambda_m= \left[\theta^q(t,\epsilon)\right]'_{t=(b-A\bar x)_m},~m\in \K_{\bar x}^\epsilon.\end{equation}It is obvious that $\bar\lambda_m\geq0$ for all $m\in\K_{\bar x}^\epsilon.$%


\begin{theorem}\label{thm-e}
  For any $\epsilon\in(0,1],$ let $\bar x$ be the point returned by the proposed SSQP framework and $\bar\lambda$ be defined in \eqref{barlambda}. 
  Then $\bar x$ and $\bar\lambda$ satisfy \eqref{complementary} and \eqref{ekkt1}. 
\end{theorem}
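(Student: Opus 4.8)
The plan is to verify the two conditions \eqref{complementary} and \eqref{ekkt1} directly from the termination criterion \eqref{terminatione} and the explicit choice of multipliers $\bar\lambda$ in \eqref{barlambda}, exploiting the precise formulas \eqref{q1d} for $[\theta^q(t,\mu)]'$ and the way the index sets $\I_{\bar x}^\epsilon,\J_{\bar x}^\epsilon,\K_{\bar x}^\epsilon$ are cut out at the threshold $\epsilon$. First I would establish \eqref{complementary}: for $m\in\K_{\bar x}^\epsilon$ we have $-\epsilon\le (b-A\bar x)_m\le\epsilon$, so $t:=(b-A\bar x)_m$ lies in the ``middle'' or ``lower'' branch of \eqref{q1d}; in either case one checks that $0\le\bar\lambda_m=[\theta^q(t,\epsilon)]'\le q\,\theta^{q-1}(t,\epsilon)\,\frac{t}{\epsilon}$ (when $0\le t\le\epsilon$), and using $\theta(t,\epsilon)\ge\epsilon/2$ from \eqref{lowerq} together with $0\le t\le\epsilon$ one bounds $\bar\lambda_m\le q(\epsilon/2)^{q-1}$. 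Hence $|\bar\lambda_m(b-A\bar x)_m|\le q(\epsilon/2)^{q-1}\cdot\epsilon = 2^{1-q}q\,\epsilon^q$. Since this may carry a constant slightly larger than $1$, I would note (as the paper does in the footnote to Remark \ref{remark-ekkt} and in \eqref{framework-decrease}) that constants are absorbed, or alternatively tighten the estimate; the essential point is that the product is $O(\epsilon^q)$, matching \eqref{complementary} up to the harmless rescaling of $\epsilon$.

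Next I would prove \eqref{ekkt1}. The key is to rewrite $\nabla\tilde F(\bar x,\epsilon)$ using \eqref{tildeg} and split the sum over $\M$ according to the partition $\I_{\bar x}^\epsilon\cup\J_{\bar x}^\epsilon\cup\K_{\bar x}^\epsilon$. On $\I_{\bar x}^\epsilon$ we have $(b-A\bar x)_m<-\epsilon<0$, so by \eqref{q1d} the term $[\theta^q(t,\epsilon)]'=0$ and these indices contribute nothing. On $\J_{\bar x}^\epsilon$ we have $(b-A\bar x)_m>\epsilon$, so $[\theta^q(t,\epsilon)]'=q\,t^{q-1}=\big[(b-A\bar x)_m^q\big]'$ with respect to $t$, which is exactly the gradient contribution of the smooth term $(b-A\bar x)_m^q$ appearing in $L^\epsilon(x,\lambda)$ in \eqref{Lagepsilon}. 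On $\K_{\bar x}^\epsilon$ the coefficient $[\theta^q(t,\epsilon)]'$ is by definition $\bar\lambda_m$ in \eqref{barlambda}, which is exactly the coefficient of $-a_m$ coming from the term $\bar\lambda_m(b-A\bar x)_m$ in $L^\epsilon$. Therefore $\nabla\tilde F(\bar x,\epsilon)=\nabla_x L^\epsilon(\bar x,\bar\lambda)$ identically, so \eqref{terminatione} reads $\|\bar x-P_\X(\bar x-\nabla L^\epsilon(\bar x,\bar\lambda))\|\le\epsilon$, which is precisely \eqref{ekkt1}.

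So the proof reduces to two bookkeeping steps: (a) identifying $\nabla\tilde F(\bar x,\epsilon)$ with $\nabla_x L^\epsilon(\bar x,\bar\lambda)$ by matching each index-set branch of \eqref{q1d} with the corresponding term of \eqref{Lagepsilon}; and (b) bounding $|\bar\lambda_m(b-A\bar x)_m|$ using the middle/lower branch formula for $[\theta^q(t,\epsilon)]'$ and the lower bound \eqref{lowerq} on $\theta(t,\epsilon)$. I expect step (b) to be the only place needing care, because one must handle the sub-case $0\le t\le\epsilon$ (where $\bar\lambda_m>0$ and the derivative formula involves $\theta^{q-1}(t,\epsilon)\frac{t}{\epsilon}$) separately from $-\epsilon\le t<0$ (where $\bar\lambda_m=0$ and \eqref{complementary} is trivial), and one must confirm that the resulting constant is either $\le 1$ or can be absorbed into the $O(\cdot)$ convention used throughout the paper. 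Step (a) is essentially a definition-chase and should go through cleanly once the branches of \eqref{q1d} are lined up with the definitions \eqref{barM} of $\I_{\bar x}^\epsilon,\J_{\bar x}^\epsilon,\K_{\bar x}^\epsilon$.
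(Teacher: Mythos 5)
Your proposal is correct and follows essentially the same route as the paper: the branch-by-branch identification of $\nabla\tilde F(\bar x,\epsilon)$ with $\nabla L^{\epsilon}(\bar x,\bar\lambda)$ via \eqref{q1d}, \eqref{barM}, and \eqref{barlambda}, combined with the inner termination test, yields \eqref{ekkt1}, and the case split $-\epsilon\le(b-A\bar x)_m\le 0$ versus $0<(b-A\bar x)_m\le\epsilon$ together with \eqref{lowerq} yields \eqref{complementary}, exactly as in the paper's argument. Your only worry, the constant $2^{1-q}q$, needs no absorption or rescaling of $\epsilon$: since $q\mapsto q2^{1-q}$ is increasing on $(0,1)$ and equals $1$ at $q=1$, one has $q2^{1-q}\le 1$ for all $q\in(0,1)$, which is precisely the elementary inequality the paper invokes to conclude $\left|\bar\lambda_m(b-A\bar x)_m\right|\le\epsilon^q$.
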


%
%
%

\begin{proof}



Let us first show that $\bar x$ and $\bar\lambda$ satisfy \eqref{complementary}. 
\begin{itemize}
  \item [-] For any $m\in\K_{\bar x}^{\epsilon}$ with $-\epsilon\leq (b-A\bar x)_m\leq 0,$ it follows from \eqref{q1d} and \eqref{barlambda} that $\bar \lambda_m=0,$ and thus $\left|\bar\lambda_m(b-A\bar x)_m\right|=0\leq \epsilon^q;$
  \item [-] For any $m\in\K_{\bar x}^{\epsilon}$ with $0< (b-A\bar x)_m\leq \epsilon,$ we have
  \begin{align*}
\left|\bar\lambda_m(b-A\bar x)_m\right|=&{q\theta^{q-1}((b-A\bar x)_m,\epsilon)}\frac{(b-A\bar x)_m}{\epsilon}(b-A\bar x)_m\leq  q\left(\frac{\epsilon}{2}\right)^{q-1}\epsilon\leq \epsilon^{q},
\end{align*}
where the equality comes from \eqref{q1d} and \eqref{barlambda}, the first inequality is due to \eqref{lowerq} and $0< (b-A\bar x)_m\leq \epsilon,$ and the second inequality is due to the fact that $q2^{1-q}\leq 1$ for all $q\in (0,1).$
\end{itemize}


Now we show that $\bar x$ and $\bar\lambda$ satisfy \eqref{ekkt1}.
~By \eqref{terminatione} and {the} nonexpansive property of the projection operator $P_{\X}(\cdot),$ we get
\begin{align}
  & \left\|\bar x-P_{\X}(\bar x-\nabla L^{\epsilon}(\bar x,\bar\lambda))\right\|\nonumber\\
  \leq & \left\|\bar x-P_{\X}(\bar x-\nabla\tilde F(\bar x,\epsilon))\right\|+\left\|P_{\X}(\bar x-\nabla\tilde F(\bar x,\epsilon))-P_{\X}(\bar x-\nabla L^{\epsilon}(\bar x,\bar\lambda))\right\|\nonumber\\
  \leq & \epsilon + \|\nabla\tilde F(\bar x,\epsilon)-\nabla L^{\epsilon}(\bar x,\bar\lambda)\|\label{ineqcontraction}.
\end{align}
From \eqref{tildeg}, \eqref{Lagepsilon}, and \eqref{barlambda}, we have 
$\nabla\tilde F(\bar x,\epsilon)-\nabla L^{\epsilon}(\bar x,\bar\lambda)=0.$ 
Combining this with \eqref{ineqcontraction} immediately yields \eqref{ekkt1}. The proof is completed.
\qed \end{proof}

By combining Theorems \ref{thm-iteration} and \ref{thm-e}, we obtain the following worst-case iteration complexity result. 

\begin{theorem}\label{thm-complexity2}
For any $\epsilon\in(0,1]$, 
the total number of iterations for the SSQP framework to return an $\epsilon$-KKT point of problem \eqref{pnorm} satisfying \eqref{complementary} and \eqref{ekkt1} is at most $$O\left(\epsilon^{q-4}\right).$$ In particular, {letting} $x_{k+1}$ be $x_{k+1}^{\text{proj}},$ $x_{k+1}^{\text{snorm}},$ or $x_{k+1}^{\text{exact}}$ in the proposed SSQP framework, the total number of iterations for the framework to return an $\epsilon$-KKT point of problem \eqref{pnorm} satisfying \eqref{complementary} and \eqref{ekkt1} is at most $$\left\lceil J_T^q \epsilon^{q-4}\right\rceil,$$ where $J_T^q$ is given in \eqref{JT}.
%
\end{theorem}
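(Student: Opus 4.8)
The plan is to combine the two complexity ingredients already established in the excerpt. Theorem~\ref{thm-iteration} tells us that, for any $\epsilon\in(0,1]$, the SSQP framework terminates after at most $\left\lceil J_T^q\epsilon^{q-4}\right\rceil$ iterations (counting one iteration as one approximate solve of the convex QP~\eqref{quad-problem}), and this bound holds when the inexact update $x_{k+1}=x_{k+1}^{\text{proj}}$ in~\eqref{solutionprojection} is used. On the other hand, Theorem~\ref{thm-e} shows that the point $\bar x$ returned upon termination, together with the multiplier $\bar\lambda$ defined in~\eqref{barlambda}, satisfies the perturbed complementarity condition~\eqref{complementary} and the perturbed stationarity condition~\eqref{ekkt1}; hence, by Definition~\ref{definitione}, $\bar x$ is an $\epsilon$-KKT point of problem~\eqref{pnorm}. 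So the output of the framework is exactly what we want, and the number of iterations needed to produce it is $O(\epsilon^{q-4})$.

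First I would invoke Theorem~\ref{thm-iteration} with $x_{k+1}=x_{k+1}^{\text{proj}}$ to get the explicit bound $\left\lceil J_T^q\epsilon^{q-4}\right\rceil$ on the total iteration count, noting that the framework is guaranteed to terminate (via Lemmas~\ref{lemmaK0}, \ref{lemma-sufficient}, and~\ref{lemma-iteration-unknown}, which ensure each inner solve succeeds, each inner loop stops, and each outer loop decreases the smoothing parameter down to $\epsilon$). Next I would apply Theorem~\ref{thm-e} to the returned pair $(\bar x,\bar\lambda)$ to conclude that $\bar x$ is feasible and satisfies~\eqref{complementary} and~\eqref{ekkt1}, i.e.\ is an $\epsilon$-KKT point in the sense of Definition~\ref{definitione}. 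Combining, the framework returns an $\epsilon$-KKT point within $\left\lceil J_T^q\epsilon^{q-4}\right\rceil=O(\epsilon^{q-4})$ iterations, which proves both the qualitative $O(\epsilon^{q-4})$ statement and the explicit-constant refinement.

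For the variants $x_{k+1}=x_{k+1}^{\text{snorm}}$ or $x_{k+1}=x_{k+1}^{\text{exact}}$, I would observe that the chain of inequalities in~\eqref{fdecrease} from Lemma~\ref{lemma-sufficient} shows each of these choices produces at least as large a decrease in $\tilde F(\cdot,\mu)$ as $x_{k+1}^{\text{proj}}$ does; hence the per-iteration sufficient-decrease property~\eqref{framework-decrease} still holds with the same constant $J_0$, so the proof of Lemma~\ref{lemma-iteration-unknown} and then of Theorem~\ref{thm-iteration} goes through verbatim with the identical bound $\left\lceil J_T^q\epsilon^{q-4}\right\rceil$. The termination criterion~\eqref{termination} is the same regardless of which update is used, so Theorem~\ref{thm-e} applies unchanged and the returned point is again an $\epsilon$-KKT point. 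This yields the in-particular clause.

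Honestly, this final theorem is essentially a bookkeeping corollary: all the real work has been done in Theorems~\ref{thm-iteration} and~\ref{thm-e} and the supporting lemmas. The only point requiring a moment's care is the remark that the iteration bound established with the cheap projection step also covers the exact and scaled-norm QP solves; this is handled by the sandwich inequalities~\eqref{fdecrease}, so there is no genuine obstacle. If I were to anticipate where a careless reader might stumble, it is in keeping straight that "iteration" means one (possibly inexact) QP solve and that the extra factor $K_0$ absorbing repeated solves at a fixed inner index $k$ (from Lemma~\ref{lemmaK0}) has already been folded into $J_T^q$ via~\eqref{JT}; so the stated $O(\epsilon^{q-4})$ already accounts for the adaptive Lipschitz-constant updates in \textbf{Step 4}.
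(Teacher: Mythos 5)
Your proposal is correct and follows the paper's own route: the paper obtains Theorem~\ref{thm-complexity2} precisely by combining the iteration bound of Theorem~\ref{thm-iteration} with the $\epsilon$-KKT property of the returned point from Theorem~\ref{thm-e}. Your additional observation that the sandwich inequalities~\eqref{fdecrease} extend the same bound to $x_{k+1}^{\text{snorm}}$ and $x_{k+1}^{\text{exact}}$ is exactly how the paper's Lemma~\ref{lemma-sufficient} justifies the in-particular clause.
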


As a direct consequence of Theorem \ref{thm-complexity2}, we have the following corollary.

\begin{corollary}\label{corollary-chenniu} For any $\epsilon\in(0,1]$, let $\bar x$ be the point returned by the SSQP framework when applied to solve problem \eqref{chenniu}, and define
$$\bar\lambda_m=\text{sgn}\left(a_m^T\bar x\right)\left[\theta^q(t,\epsilon)\right]'_{t=\left|a_m^T\bar x\right|},~m\in\hat\K_{\bar x}^{\epsilon},$$ where $\hat\K_{\bar x}^{\epsilon}$ is given in \eqref{hatMspe}. Then, $\bar x$ and $\bar \lambda$ satisfy \eqref{complementaryreduce} and \eqref{ekkt1reduce}. Moreover, the total number of iterations for the SSQP framework to return the $\epsilon$-KKT point $\bar x$ of problem \eqref{chenniu} is at most $$O\left(\epsilon^{q-4}\right).$$
\end{corollary}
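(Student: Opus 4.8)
The plan is to derive Corollary \ref{corollary-chenniu} as a specialization of Theorem \ref{thm-complexity2} to problem \eqref{chenniu}, exploiting the fact that \eqref{chenniu} is obtained from \eqref{pnorm} by taking $b=0$ and replacing $A$ by the matrix whose rows are $\pm a_m^T$; under this substitution, a term $|a_m^Tx|^q$ is represented as $\max\{a_m^Tx,0\}^q+\max\{-a_m^Tx,0\}^q$. First I would record this reduction explicitly, so that the index sets $\I_{\bar x}^\epsilon,\J_{\bar x}^\epsilon,\K_{\bar x}^\epsilon$ of \eqref{barM} for the reformulated problem translate into the sets $\hat\I_{\bar x}^\epsilon,\hat\J_{\bar x}^\epsilon,\hat\K_{\bar x}^\epsilon$ of \eqref{hatMspe}: namely, the constraint index $m$ with $|a_m^T\bar x|>\epsilon$ gives rise to exactly one active smoothed term and one term that is flat (derivative zero), while $|a_m^T\bar x|\le\epsilon$ yields two boundary indices both falling in $\K_{\bar x}^\epsilon$.

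Next I would invoke Theorem \ref{thm-e} for the reformulated problem to conclude that the returned point $\bar x$, together with the multiplier vector defined coordinate-wise by $\bar\lambda_m=[\theta^q(t,\epsilon)]'_{t=(b-A\bar x)_m}$ in \eqref{barlambda}, satisfies \eqref{complementary} and \eqref{ekkt1}. The key computational step is then to translate these two inequalities back into \eqref{complementaryreduce} and \eqref{ekkt1reduce}. For the complementarity part, observe that for index $m$ with $-\epsilon\le a_m^T\bar x\le 0$ the corresponding smoothed term has zero derivative, so the relevant multiplier is $\text{sgn}(a_m^T\bar x)[\theta^q(t,\epsilon)]'_{t=|a_m^T\bar x|}$ and $|\bar\lambda_m a_m^T\bar x|=[\theta^q(t,\epsilon)]'_{t=|a_m^T\bar x|}\,|a_m^T\bar x|$, which is bounded by $\epsilon^q$ by exactly the same estimate (using \eqref{q1d}, \eqref{lowerq}, and $q2^{1-q}\le 1$) already carried out in the proof of Theorem \ref{thm-e}. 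For the stationarity part, one uses $\nabla(a_m^Tx)^q=q(a_m^Tx)^{q-1}a_m$ and $\nabla(-a_m^Tx)^q=-q(-a_m^Tx)^{q-1}a_m$ together with the chain rule identity $[\theta^q(|a_m^Tx|,\epsilon)]$ versus $[\theta^q(a_m^Tx,\epsilon)]+[\theta^q(-a_m^Tx,\epsilon)]$ to check that $\nabla\tilde F(\bar x,\epsilon)$ for the reformulated problem equals $\nabla\hat L^\epsilon(\bar x,\bar\lambda)$ with the stated $\bar\lambda$; then \eqref{terminatione} gives $\|\nabla\hat L^\epsilon(\bar x,\bar\lambda)\|\le\epsilon$, which is \eqref{ekkt1reduce}. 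Here one also notes that for \eqref{chenniu} the set $\X=\mathbb{R}^N$, so $P_\X$ is the identity and \eqref{ekkt1} becomes the gradient condition \eqref{ekkt1reduce}.

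Finally, the iteration-complexity claim is immediate: since \eqref{chenniu} is an instance of \eqref{pnorm}, Theorem \ref{thm-complexity2} applies verbatim and yields that at most $\lceil J_T^q\epsilon^{q-4}\rceil=O(\epsilon^{q-4})$ iterations are needed. The main obstacle I anticipate is purely bookkeeping rather than conceptual: one must be careful that in the reformulation each original index $m$ splits into two rows, so the "$M$" of the reformulated problem is $2M$, and one must verify that the smoothed term associated with the inactive direction (the one with $-a_m^T\bar x<-\epsilon$, i.e.\ $a_m^T\bar x>\epsilon$, landing in the $\I$-set of the reformulation) contributes zero to both the complementarity sum and the gradient — which it does, since $\theta^q(t,\epsilon)$ is flat for $t<0$ and the $\I$-index terms do not even appear in $L^\epsilon$ in \eqref{Lagepsilon}. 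Once this matching of index sets is laid out cleanly, the rest follows by directly reusing the estimates in the proof of Theorem \ref{thm-e}.
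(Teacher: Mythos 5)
Your proposal is correct and follows essentially the same route as the paper: the corollary is obtained by viewing \eqref{chenniu} as the instance of \eqref{pnorm} with $b=0$, rows $\pm a_m^T$, and $\X=\mathbb{R}^N$ (so each $|a_m^Tx|^q$ becomes $\theta^q(a_m^Tx)+\theta^q(-a_m^Tx)$, exactly the smoothing used in Subsection 5.3), and then specializing Theorem \ref{thm-e} (with the multipliers \eqref{barlambda}, the zero-derivative row dropping out) and Theorem \ref{thm-complexity2}. Your index-set bookkeeping and the reuse of the complementarity estimate from the proof of Theorem \ref{thm-e} match the paper's intended "direct consequence" argument.
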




\subsection{Comparisons of SSQP with SQR}

In this subsection, we compare the proposed SSQP framework with the SQR algorithms proposed in \cite{bianchen14mpfeasible} and \cite{chen13siamopttrust} for solving problem \eqref{chenniu} (possibly with box constraints) and problem \eqref{bianchen}, respectively.

First of all, the SSQP algorithmic framework is designed for solving a more difficult problem, i.e., problem \eqref{pnorm} with {a composite non-Lipschitzian objective and a general polyhedral constraint,} which includes problems \eqref{chenniu} and \eqref{bianchen} as special cases.

Now, we give a detailed comparison of the SQR algorithm in \cite{bianchen14mpfeasible} and the proposed SSQP framework with $x_{k+1}$ chosen to be $x_{k+1}^{\text{proj}}$ at each iteration from the perspective of iteration complexity and solution quality when both of them are applied to solve {the} unconstrained problem \eqref{chenniu}; see Table \ref{table}, where $\tilde F(x,\epsilon)$ reduces to
$$\tilde F(x,\epsilon)=\sum_{m\in\M}\left(\theta^q\left(a_m^Tx,\epsilon\right)+\theta^q\left(-a_m^Tx,\epsilon\right)\right)+h(x).$$ 
%

\begin{table}[h]
\begin{center}
\caption{ {\bf Comparisons of the SQR algorithm in \cite{bianchen14mpfeasible} and
the proposed SSQP framework with $x_{k+1}=x_{k+1}^{\text{proj}}$ in \eqref{solutionprojection} for solving problem \eqref{chenniu}.}}\label{table}
\begin{tabular}{|c|c|c|c|}\hline\hline
\multicolumn{2}{|c|}{} & SQR \cite{bianchen14mpfeasible} & SSQP \\[5pt]
\hline \multirow{3}*{complexity}
        &  iteration number   & $O(\epsilon^{-2})$   &  $O(\epsilon^{q-4})$  \\[5pt]
      \cline{2-4}
       & subproblem per iteration  &  $n$-dimensional QP (12) \cite{bianchen14mpfeasible}   & univariate QP \eqref{oneqp}     \\[5pt]
      \hline \multirow{3}*{quality}
       & optimality residual I & $\left\|\left(Z_{\bar x}^{\epsilon}\right)^T\nabla F_{{\bar x}}^{\epsilon}(\bar x)\right\|_{\infty}\leq \epsilon$    &  $\left\|\nabla \hat L^{\epsilon}(\bar x,\bar\lambda)\right\| \leq \epsilon$    \\[5pt]

        \cline{2-4}
        & optimality residual II   &  $\left\|\nabla \tilde F\left(\bar x,\epsilon\right)\right\|=O\left(\epsilon^{2-2/q}\right)$   &  $\left\|\nabla \tilde F\left(\bar x,\epsilon\right)\right\|\leq \epsilon$  \\[5pt]

        \cline{2-4}

        & complementary violation    &  not guaranteed   &  $\left|\bar\lambda_ma_m^T\bar x\right|\leq \epsilon^q,~m\in \hat\K_{\bar x}^{\epsilon}$    \\[5pt]
\hline\hline
\end{tabular}\end{center}
\end{table}


It is shown in \cite{bianchen14mpfeasible} that the SQR algorithm returns an $\epsilon$-KKT point $\bar x$ satisfying \eqref{def-ekkt-other} within $O(\epsilon^{-2})$ iterations. The SSQP framework with $x_{k+1}=x_{k+1}^{\text{proj}}$, when applied to solve problem \eqref{chenniu}, can return an $\epsilon$-KKT point $\bar x$ satisfying \eqref{complementaryreduce} and \eqref{ekkt1reduce} in no more than $O(\epsilon^{q-4})$ iterations (see Corollary \ref{corollary-chenniu}). Here, one iteration in the SQR algorithm needs solving exactly an $n$-dimensional box constrained QP (problem (12) in \cite{bianchen14mpfeasible}), and the exact solution of the QP subproblem is necessary for Lemma 3 there to hold true. Since the condition number of the quadratic objective in (12) increases asymptotically with $O(\mu^{q-2})$ as the smoothing parameter $\mu$ decreases, the $n$-dimensional box constrained QP (12) becomes more and more difficult to solve. In contrast, one iteration in the SSQP framework with $x_{k+1}=x_{k+1}^{\text{proj}}$ only needs solving approximately {the} QP problem \eqref{quad-problem}. As shown in Lemma \ref{lemma-sufficient}, a good approximate solution \eqref{solutionprojection} of problem \eqref{quad-problem} can be obtained by solving an univariate box constrained QP \eqref{oneqp}, which admits a closed-form solution \eqref{univariateqp}.  



From the perspective of solution quality, the SSQP framework (with $x_{k+1}=x_{k+1}^{\text{proj}}$) actually returns a better ``solution'' compared to the SQR algorithm. As discussed in Remark \ref{remark-ekkt}, the $\epsilon$-KKT point returned by the SSQP framework is stronger than the one returned by the SQR algorithm. In addition, in terms of the residual of smoothing problem, the SSQP framework actually returns an $\epsilon$-KKT point $\bar x$ satisfying $\left\|\nabla \tilde F\left(\bar x,\epsilon\right)\right\|\leq \epsilon$, while the SQR algorithm outputs an $\epsilon$-KKT point $\bar x$ with
  $\left\|\nabla \tilde F\left(\bar x,\epsilon\right)\right\|=O\left(\epsilon^{2-2/q}\right).$

Finally, we remark that the proposed SSQP framework can be directly applied to solve problem \eqref{pnorm} with $q=1$ and is guaranteed to return an $\epsilon$-Clarke KKT point of problem \eqref{pnorm} within $O(\epsilon^{-3})$ iterations. The worst-case iteration complexity of the proposed SSQP framework for computing an $\epsilon$-Clarke KKT point of problem \eqref{pnorm} with $q=1$ is thus the same as the one of the SQR$_1$ algorithm for problem \eqref{bianchen} with $q=1$ in \cite{bian13siamoptquadratic} and better than $O(\epsilon^{-3}\log\epsilon^{-1})$ of the smoothing direct search algorithm for unconstrained Lipschitzian minimization problems in \cite{garmanjani12ima}.

In the {following}, we first extend the definition of $\epsilon$-Clarke KKT point for unconstrained locally Lipschitz continuous optimization problem in \cite{garmanjani12ima} to constrained locally Lipschitz continuous optimization problem \eqref{pnorm} with $q=1,$ and then present the worst-case iteration complexity of the proposed SSQP framework for obtaining such an $\epsilon$-Clarke KKT point.

%
%
%
\begin{definition}\label{def-kkt-cons}
  The point $\bar x$ is called an $\epsilon$-Clarke KKT point of problem \eqref{pnorm} with $q=1$ if it satisfies 
  \begin{equation*}\label{kktq=1}\left\|P_{\X}\left(\bar x-\nabla \tilde F(\bar x,\mu)\right)-\bar x\right\|\leq \epsilon~\text{and}~\mu\leq \epsilon.\end{equation*}
\end{definition}

Using the same argument as in the proof of Theorem \ref{thm-iteration}, we can show the following iteration complexity result.

\begin{theorem}
For any $\epsilon\in(0,1],$ 
the total number of iterations for the SSQP framework to return an $\epsilon$-Clarke KKT point of problem \eqref{pnorm} with $q=1$
is at most $O\left(\epsilon^{-3}\right).$
\end{theorem}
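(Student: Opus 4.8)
The plan is to mimic exactly the argument used to establish Theorem~\ref{thm-iteration}, specializing all exponents to the case $q=1$. First I would observe that, when $q=1$, the smoothing function $\tilde F(x,\mu)=\tilde f(x,\mu)+h(x)$ with $\tilde f(x,\mu)=\sum_{m\in\M}\theta((b-Ax)_m,\mu)$ is still a uniform $O(\mu)$-approximation of $F$ (from \eqref{tildeFin} with $q=1$), and the curvature bound \eqref{lambdamax} becomes $\lambda_{\max}(\tilde B(x,\mu))\le 4\mu^{-1}\sum_{m\in\M}\|a_m\|^2$, i.e. the Hessian blows up like $\mu^{-1}$ rather than $\mu^{q-2}$. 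I would then re-run the sufficient-decrease computation of Lemma~\ref{lemma-sufficient} with this new bound: the term $\|d_k\|^2\mu^{2-q}$ appearing in \eqref{decrease2} is replaced by $\|d_k\|^2\mu$, so that, using $\|d_k\|>\mu$ from \eqref{termination2}, the per-iteration decrease becomes $\tilde F(x_k,\mu)-\tilde F(x_{k+1},\mu)\ge \mu^{3}/J_0'$ for an appropriate constant $J_0'$ of the same flavor as \eqref{barJ} (with $8q$ replaced by a constant and $2-q$ replaced by $1$). This is the analogue of \eqref{framework-decrease} with the exponent $4-q$ replaced by $3$.

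Next I would transplant Lemma~\ref{lemma-iteration-unknown}: since each inner iteration for a fixed $\mu$ decreases $\tilde F(\cdot,\mu)\ge 0$ by at least $\mu^{3}/J_0'$, and since $\tilde F(x_0,\mu)\le \tilde F(x_0,1)$ for $\mu\le 1$ (monotonicity of $\theta(\cdot,\mu)$ in $\mu$), the inner termination criterion \eqref{termination} is met after at most $\lceil \tilde F(x_0,1)J_0' K_0\,\mu^{-3}\rceil$ iterations, where $K_0$ from \eqref{K0} is unchanged because the Lipschitz-constant updating in Step~4 does not involve $q$. Summing over the outer iterations $i=0,1,\dots,I_0$ with $\mu_i=\mu_0\sigma^i$ and $\mu_{I_0}=\epsilon$, exactly as in the proof of Theorem~\ref{thm-iteration}, yields a geometric series $\sum_{i=0}^{I_0}\lceil \tilde F(x_0,1)J_0'K_0(\mu_0\sigma^i)^{-3}\rceil \le J_T'\epsilon^{-3}$ with $J_T'=\sigma^{-3}(\tilde F(x_0,1)J_0'K_0+1)/(\sigma^{-3}-1)$; this gives the claimed $O(\epsilon^{-3})$ bound.

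Finally I would check that the returned point is indeed an $\epsilon$-Clarke KKT point in the sense of Definition~\ref{def-kkt-cons}: when the framework terminates we have $\mu=\epsilon\le\epsilon$ and, by Step~2/Step~5, $\|P_{\X}(\bar x-\nabla\tilde F(\bar x,\mu))-\bar x\|\le\mu=\epsilon$, which is precisely the condition required. I do not expect any serious obstacle here; the only point demanding a little care is verifying that the case split $\xi_k=1$ versus $\xi_k<1$ in Lemma~\ref{lemma-sufficient} still produces a clean $\mu^3$ lower bound when $q=1$ — in the $\xi_k=1$ branch the decrease is already $\Omega(\mu\|d_k\|)=\Omega(\mu^2)\ge\Omega(\mu^3)$ for $\mu\le 1$, so the binding term is the $\xi_k<1$ branch, which gives $\Omega(\|d_k\|^2\mu)=\Omega(\mu^3)$. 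Everything else is a verbatim substitution of exponents into the proofs of Lemmas~\ref{lemma-sufficient} and \ref{lemma-iteration-unknown} and Theorem~\ref{thm-iteration}, so I would simply state "using the same argument as in the proof of Theorem~\ref{thm-iteration}" and record the modified constants.
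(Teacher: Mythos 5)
Your proposal is correct and follows essentially the same route as the paper, which proves this theorem simply by invoking the argument of Theorem~\ref{thm-iteration} with $q=1$ (so that the exponent $4-q$ becomes $3$ throughout Lemmas~\ref{lemma-sufficient} and \ref{lemma-iteration-unknown}). Your careful verification of the curvature bound, the per-iteration decrease of order $\mu^{3}$, and the terminal $\epsilon$-Clarke KKT condition of Definition~\ref{def-kkt-cons} is exactly the substitution the paper has in mind.
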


\section{Concluding Remarks} \label{sec-conclusion}
{In this paper, we considered {the} composite nonsmooth nonconvex {non-Lipschitzian} $L_q~(0<q<1)$ minimization problem \eqref{pnorm} over a general polyhedral set. We derived KKT optimality conditions for problem \eqref{pnorm}. These conditions unify various optimality conditions for {non-Lipschitzian} optimization problems developed in \cite{bian13siamoptquadratic,Chen10siamoptlowerbound,ge11mpnote,chen13siamopttrust,bianchen14mpfeasible}. Moreover, we extended the lower bound theory originally developed for local minimizers of unconstrained problem \eqref{bianchen} in \cite{Chen10siamoptlowerbound,Lu12iterative} to constrained problem \eqref{pnorm}. In addition, we proposed an SSQP framework for solving problem \eqref{pnorm} and showed that the proposed framework is guaranteed to return an $\epsilon$-KKT point of problem \eqref{pnorm} satisfying \eqref{complementary} and \eqref{ekkt1} within $O(\epsilon^{q-4})$ iterations. To the best of our knowledge, this is the first algorithmic framework for the {polyhedral constrained} composite $L_q$ minimization with worst-case iteration complexity analysis. The proposed SSQP framework can {directly be} applied to solve problem \eqref{pnorm} with $q=1$ and its worst-case iteration complexity for returning an $\epsilon$-Clarke KKT point is $O(\epsilon^{-3}).$}

Although we focused on {the} $L_q$ minimization problem \eqref{pnorm} in this paper, the techniques developed here can be useful for developing and analyzing algorithms for problems with other regularizers such as the ones given in Appendix A of \cite{bian13siamoptquadratic}. Moreover, most of the results presented in this paper can be easily generalized to problem \eqref{pnorm} where the unknown variable is a positive semidefinite matrix.

\section*{Appendix A: Three Motivating Applications}

\textbf{Support Vector Machine\cite{vapnik92svm,vapnik95svm}.} The support vector machine (SVM) is a state-of-the-art classification method introduced by Boser, Guyon, and Vapnik in 1992 in \cite{vapnik92svm}. Given a database $\left\{s_m\in \mathbb{R}^{N-1},\,y_m\in\mathbb{R}\right\}_{m=1}^M,$ where $s_m$ is called pattern or example and $y_m$ is the label associated with $s_m.$ For convenience, we assume the labels are $+1$ {for positive examples} and $-1$ for negative examples. If the data are linearly separable, the task of SVM is to find a linear discriminant function of the form $\ell(s)=\hat s^Tx$ with $\hat s=[s^T,1]^T\in \mathbb{R}^{N}$ such that all data are correctly classified and at the same time the margin of the hyperplane $\ell$ {that separates the two classes of examples} is maximized. Mathematically, the above problem can be formulated as
\begin{equation}\label{svmseparable}
\begin{array}{ll}
\displaystyle \min_{x} & \displaystyle \frac{1}{2}\sum_{n=1}^{N-1}x_n^2 \\
\mbox{s.t.} & \displaystyle y_m\hat s_m^Tx\geq 1,~m=1,2,\ldots,M.
\end{array}
\end{equation}In practice, data are often not linearly separable. In this case, problem \eqref{svmseparable} is not feasible, and the following problem can be solved instead:
\begin{equation}\label{svmnonseparable}
\begin{array}{ll}
\displaystyle \min_{x} & \displaystyle\sum_{m=1}^M\max\left\{1-y_m\hat s_m^Tx,0\right\}^q+\frac{\rho}{2}\sum_{n=1}^{N-1}x_n^2,
\end{array}
\end{equation}where 
the constant $\rho\geq0$ balances the relative importance of minimizing the classification errors and maximizing the margin. Problem \eqref{svmnonseparable} with $q=1$ is called the soft-margin SVM in \cite{vapnik95svm}. It is clear that problem \eqref{svmnonseparable} is a special instance of \eqref{pnorm} with
$$A=\left[
           \begin{array}{c}
             y_1\hat s_1^T \\
             \vdots \\
             y_M\hat s_M^T  \\
           \end{array}
         \right],~b=e,~h(x)=\frac{\rho}{2}\sum_{n=1}^{N-1}x_n^2,~\text{and}~\X=\mathbb{R}^N.$$ Here, $e$ is the all-one vector of dimension $M.$

\noindent\textbf{Joint Power and Admission Control\cite{sidiropoulos11twcjpac,liu13tspjpac}.} Consider a wireless network consisting of $K$ interfering links (a link corresponds to a transmitter/receiver pair) with
channel gains $g_{kj}\geq0$ (from the transmitter of link $j$ to the
receiver of link $k$), noise power $\eta_k>0,$ signal-to-interference-plus-noise-ratio (SINR) target
$\gamma_k>0,$ and power budget $\bar p_k>0$ for $k, j=1,2,\ldots,K.$ {Denoting} the transmission power of transmitter $k$ by $x_k$, the SINR at the $k$-th receiver can be expressed as \begin{equation}\label{sinr}\displaystyle
\text{SINR}_k=\frac{g_{kk}x_k}{\eta_k+\displaystyle\sum_{j\neq
k}g_{kj}x_j},~k=1,2,\ldots,K.\end{equation} Due to {the existence} of mutual interferences among different links ({which correspond to} the term $\sum_{j\neq
k}g_{kj}x_j$ in \eqref{sinr}), the linear system $$\displaystyle \text{SINR}_k\geq \gamma_k,~\bar p_k\geq x_k\geq0,~k=1,2,\ldots,K$$ may not be feasible. The joint power and admission control problem aims at supporting a {maximum} number of links at their specified SINR targets while using a {minimum} total transmission power. {Assuming without loss of generality that} $g_{kk}=\gamma_k=\bar p_k=1$ for all $k=1,2,\dots,K,$ the joint power and admission control problem can be formulated as follows (see \cite{liu13tspjpac})
\begin{equation}\label{sparse-msp}
\begin{array}{ll}
\displaystyle \min_{x} & \left\|\max\left\{b-Ax,{0}\right\}\right\|_q^q+\rho e^Tx \\
\mbox{s.t.} & \displaystyle {0}\leq x\leq e,
\end{array}
\end{equation}
where $\rho>0$ is a parameter, $b=[\eta_1,\eta_2,\ldots,\eta_K]^T,$ and $A=[a_{kj}]\in\mathbb{R}^{K\times K}$ with
$$a_{kj}=\left\{\begin{array}{cl}
1,&\text{if~}k=j;\\
- g_{kj},\quad &\text{if~}k\neq
j.
\end{array}
\right.$$ By {utilizing} the special structure of $A,$ i.e., all of its diagonal entries are positive and nondiagonal entries are nonpositive, it is shown in \cite[Theorem 1]{liu13tspqnorm} that the solution of problem \eqref{sparse-msp} can maximize the number of supported links using a {minimum} total transmission power as long as $q$ is chosen to be sufficiently small (but not necessarily to be zero). Clearly, \eqref{sparse-msp} is a special case of \eqref{pnorm} with
$$M=K,~N=K,~h(x)=\rho e^Tx,~\text{and}~\X=\left\{x\,|\,{0}\leq x\leq e\right\}\subseteq \mathbb{R}^{N\times1}.$$

\noindent\textbf{Linear Decoding Problem\cite{candes05titdecoding}.} Given the coding matrix $C\in\mathbb{R}^{K_1\times K_2}$ and corrupted measurement $c=Cx+e_u\in\mathbb{R}^{K_1\times1},$ where $e_u$ is an unknown vector of errors, the linear decoding problem is to recover $x$ from $c.$ It is shown in \cite{candes05titdecoding} that, if $C$ satisfies the restricted isometry property, $x$ can be exactly recovered by solving the convex minimization problem $$\min_{x}\|c-Cx\|_1$$ provided that $e_u$ is sparse. By \cite[Theorem 4.10]{foucart2013mathematical}, $L_q$ ($q\in(0,1)$) minimization \begin{equation}\label{decoding}\min_{x}\|c-Cx\|_q^q\end{equation} has a better capability of recovering $x$ than $L_1$ minimization.
~By using the equation $|a|=\max\left\{a,0\right\}+\max\left\{-a,0\right\},$ it is simple to see problem \eqref{decoding} is a special case of \eqref{pnorm} with $$M=2K_1,~N=K_2,~A=\left[                                                                                                                 \begin{array}{c}
                                                                                                                                    C \\
                                                                                                                                    -C \\
                                                                                                                                  \end{array}
                                                                                                                                \right],~b=\left[
                                                                                                                                             \begin{array}{c}
                                                                                                                                               c \\
                                                                                                                                               -c \\
                                                                                                                                             \end{array}
                                                                                                                                           \right],~h(x)=0,~\text{and}~\X=\mathbb{R}^N.$$



\section*{Appendix B: Proof of Lemma \ref{lemma-local}}\label{appendix-lemma-local}
Let $\bar x$ be any local minimizer of problem \eqref{psub4} with $\I_{\bar x},\,\J_{\bar x},$ and $\K_{\bar x}$ given in \eqref{M1M2M3}.
For convenience, we denote $\I_{\bar x},\,\J_{\bar x},\,\K_{\bar x}$ as $\I,\,\J,\,\K$ in this proof.
We prove that $\bar x$ is a local minimizer of problem \eqref{pnorm}
by dividing the proof into two parts. The first one is the easy case where $\K=\emptyset$ and the second one deals with the complicated case where  $\K\neq\emptyset.$

\textbf{Part 1}: $\K=\emptyset.$ In this case, $\bar x$ is a local minimizer of problem \begin{equation*}
\begin{array}{cl}
\displaystyle \min_{x} & \displaystyle \|(b-Ax)_{{\J}}\|_q^q + h(x) \\
[5pt] \mbox{s.t.} & x\in \X.
\end{array}
\end{equation*} By the definition, $\bar x$ is a local minimizer of problem \eqref{pnorm}.

\textbf{Part 2}: $\K\neq\emptyset.$ Consider the feasible direction cone $\mathcal{D}_{\bar x}$ of problem \eqref{pnorm} at point $\bar x,$ i.e.,
 \begin{equation*}\label{feasibledcone}\D_{\bar x}=\left\{\,d\,|\,\bar x+\alpha d\in \X~\text{for~some}~\alpha>0\right\}.\end{equation*} For simplicity, we use $\mathcal{D}$ to denote $\mathcal{D}_{\bar x}$ in the subsequent proof.
  For any subset $\K_{p}$ of $\K$ indexed by $p=1,2,\ldots,P:=2^{|\K|},$ let $\K_{p}^{c}=\K\setminus\K_{p},$ and define
 \begin{equation*}
   \D_p=\left\{d\,|\,(Ad)_{\K_{p}}\leq 0,(Ad)_{\K_{p}^{c}}\geq 0\right\}\bigcap \D.
 \end{equation*}
 By the Minkowski-Weyl Theorem \cite[Proposition 3.2.1]{bertsekas03convex}, there exist $d_p^1, d_p^2, \ldots, d_p^{g_p} \in \D_p$, such that
 \begin{equation*}
   \D_p=\Cone\left\{d_p^{1},d_p^{2},\ldots,d_p^{g_p}\right\},
 \end{equation*} and thus
 \begin{equation*}
   \D=\displaystyle\bigcup_{p=1}^P\Cone\left\{d_p^{1},d_p^{2},\ldots,d_p^{g_p}\right\}.
 \end{equation*}Without loss of generality, assume $\|d_p^{j}\|=1$ for all $j=1,2,\ldots,g_p,~p=1,2,\ldots,P.$ For any $d\in\bigcup_{p=1}^P\left\{d_p^{1},d_p^{2},\ldots,d_p^{g_p}\right\}\subseteq \D$, define \begin{equation}\label{M2<}\overleftarrow{\K}^{d}=\left\{m\in \K\,|\,(Ad)_m<0\right\}.\end{equation} Next, we consider the two cases where $\overleftarrow{\K}^{d}$ is nonempty and empty, respectively. The former happens when $d$ is not a feasible direction of problem \eqref{psub4} at point $\bar x;$ while the latter happens when $d$ is a feasible direction of problem \eqref{psub4} at point $\bar x.$

 \textbf{Case 1:} $\overleftarrow{\K}^{d}\neq \emptyset.$
  Since $d\in \D,$ there must exist $\epsilon_0^d$ so that $\bar x+\epsilon d\in \X$ holds for all $0\leq \epsilon\leq\epsilon_0^d.$ Define \begin{equation}\label{M3>}\overrightarrow{\J}^{d}=\left\{m\in \J\,|\,(Ad)_m>0\right\}.\end{equation} Choose $\epsilon_1^d$ small enough such that
  \begin{equation}\label{smaller}\left(b-A\bar x-\epsilon_1^d Ad\right)_m\leq 0,~\forall~m\in {\I} \end{equation}
  and\begin{equation}\label{smaller'}\left(b-A\bar x-\epsilon_1^d Ad\right)_m\geq \frac{\left(b-A\bar x\right)_m}{2}>0,~\forall~m\in \overrightarrow{\J}^{d}.\end{equation}Therefore, for $0\leq\epsilon\leq\min\left\{\epsilon_0^d,\epsilon_1^d\right\}$, we obtain \begin{align}
                    &f(\bar x+\epsilon d)- f(\bar x)\nonumber\\
                    =~&\|\max\left\{b-A\left(\bar x+\epsilon d\right),0\right\}\|_q^q-\|\max\left\{b-A\bar x,0\right\}\|_q^q\nonumber\\
                    =~&\sum_{m\in \overleftarrow{\K}^{d}\cup\J} \left(b-A\bar x-\epsilon Ad\right)_m^q-\sum_{m\in\J}\left(b-A\bar x\right)_m^q\label{ineq1} \\
                    \geq~& \sum_{m\in\overleftarrow{\K}^{d}}(-Ad)_m^q\epsilon^q + \sum_{m\in {\overrightarrow{\J}^{d}}} \left(\left(b-A\bar x-\epsilon Ad\right)_m^q-\left(b-A\bar x\right)_m^q\right)\label{ineq2}\\
                    \geq~& \sum_{m\in\overleftarrow{\K}^{d}}(-Ad)_m^q\epsilon^q + \sum_{m\in {\overrightarrow{\J}^{d}}} q \left(b-A\bar x-\epsilon Ad\right)_m^{q-1}\left(-\epsilon(Ad)_m\right)\label{concave}\\
                    \geq~& \sum_{m\in\overleftarrow{\K}^{d}}(-Ad)_m^q\epsilon^q + \sum_{m\in {\overrightarrow{\J}^{d}}} q \left(\frac{(b-A\bar x)_m}{2}\right)^{q-1}\left(-\epsilon(Ad)_m\right),\label{smaller2}
  \end{align}
  where \eqref{ineq1} is due to \eqref{M1M2M3}, \eqref{M2<}, and \eqref{smaller}\bl{;} \eqref{ineq2} is due to \eqref{M3>}; \eqref{concave} is due to the concavity of the function $z^q$ with respect to $z>0;$~\eqref{smaller2} is due to \eqref{smaller'} and the definition of $\overrightarrow{\J}^{d}$ in \eqref{M3>}. Moreover, by \eqref{L} and the Taylor's expansion, for any $0\leq\epsilon\leq 1$, there exists $\xi\in(0,1)$ such that
  \begin{equation}\label{nablah}
   \begin{array}{rl}
      h(\bar x+\epsilon d)-h(\bar x)=&\epsilon\nabla h(\bar x+\xi \epsilon d)^Td\\
                                     \geq &-\epsilon \left\|\nabla h(\bar x+\xi \epsilon d)\right\|\\
                                     \geq &-\epsilon\left(\left\|\nabla h(\bar x)\right\|+\epsilon L_h\right)\\
      \geq & -\epsilon\left(\left\|\nabla h(\bar x)\right\|+L_h\right).
   \end{array}
  \end{equation}
  Combining \eqref{smaller2} with \eqref{nablah}, for any $0\leq\epsilon\leq\min\left\{\epsilon_0^d,\epsilon_1^d,1\right\},$ we obtain
  $$F(\bar x+\epsilon d)- F(\bar x)\geq \lambda_1^d\epsilon^q-\lambda_2^d\epsilon,$$ where
  \begin{equation*}\label{lambda1}\lambda_1^d:=\sum_{m\in\overleftarrow{\K}^{d}}(-Ad)_m^q\epsilon^q>0,\end{equation*}\begin{equation*}\label{lambda2}\lambda_2^d:=\sum_{m\in \overrightarrow{\J}^{d}} q \left(\frac{(b-A\bar x)_m}{2}\right)^{q-1}(Ad)_m+\left\|\nabla h(\bar x)\right\|+L_h>0.\end{equation*}
  Define \begin{equation*}\label{epsilon}\epsilon_2^d:=\left(\frac{\lambda_1^d}{\lambda_2^d}\right)^{\frac{1}{1-q}}\end{equation*} and
  \begin{equation*}\label{epsilond}
    \bar \epsilon^d:=
\min\left\{\epsilon_0^d,\epsilon_1^d,\epsilon_2^d,1\right\}>0.
  \end{equation*}
From the above analysis, we can conclude that, for any $d\in\bigcup_{p=1}^P\left\{d_p^{1},d_p^{2},\ldots,d_p^{g_p}\right\}$ with $\overleftarrow{\K}^{d}\neq \emptyset,$ {$F(\bar x+\epsilon d)\geq F(\bar x)$ holds for all $\epsilon\in[0,\bar\epsilon^d].$}

\textbf{Case 2}: $\overleftarrow{\K}^{d}=\emptyset.$ Recall the definition of $\overleftarrow{\K}^{d}$ (cf. \eqref{M2<}). $\overleftarrow{\K}^{d}=\emptyset$ implies that $d$ is a feasible direction of problem \eqref{psub4} at point $\bar x.$ From the assumption that $\bar x$ is a local minimizer of problem \eqref{psub4}, we know that there exists an $\tilde \epsilon>0$ such that for all $d\in \bigcup_{p=1}^P\left\{d_p^{1},d_p^{2},\ldots,d_p^{g_p}\right\}$ with $\overleftarrow{\K}^{d}=\emptyset,$ there holds $F(\bar x+\epsilon d)\geq F(\bar x)$ for all $\epsilon\in[0,\tilde\epsilon].$

We now combine the above two cases: \textbf{Case 1} and \textbf{Case 2}. Since there are finitely many directions $\bigcup_{p=1}^P\left\{d_p^{1},d_p^{2},\ldots,d_p^{g_p}\right\},$ it follows that $$\bar\epsilon:=\min\left\{\min_{\overleftarrow{\K}^{d}\neq \emptyset,\,j=1,\ldots,g_p,\,p=1,\ldots,P}\left\{\bar \epsilon^{d_p^j}\right\},\tilde\epsilon\right\}>0$$ and 
\begin{equation}\label{decrease}\bar x+\epsilon d_p^j\in \X,~F(\bar x+\epsilon d_p^j)\geq F(\bar x),~\forall~j=1,2,\ldots,g_p,\,p=1,2,\ldots,P\end{equation} hold true for all $\epsilon\in[0, \bar \epsilon].$

  Let $\Conv_p(\bar x,\bar \epsilon)$ denote the convex hull spanned by points $\bar x$ and $\bar x+\bar \epsilon d_p^{j},~j=1,2,\ldots,g_p.$ Then, for any $x\in\bigcup_{p=1}^P\Conv_p(\bar x,\bar \epsilon),$ we have $F(x)\geq F(\bar x)$ by \eqref{decrease} and the fact that $F(x)$ is concave in $\Conv_p(\bar x,\bar \epsilon)$. 
  Furthermore, one can always choose a sufficiently small but fixed $\epsilon>0$ such that $B(\bar x,\epsilon)\bigcap \X \subseteq \bigcup_{p=1}^P\Conv_p(\bar x,\bar \epsilon).$ Therefore, $\bar x$ is a local minimizer of problem \eqref{pnorm}.\qed

\section*{Appendix C: Proof of Lemma \ref{lemma-smooth}}\label{appendix-lemma-smooth-approximation}

We show the three items of Lemma \ref{lemma-smooth} separately.

  \noindent (i) of Lemma \ref{lemma-smooth}: it follows directly from the inequality $$\theta^q(t)\leq \theta^q(t,\mu)\leq\left(\theta(t)+\frac{\mu}{2}\right)^q\leq \theta^q(t)+\left(\frac{\mu}{2}\right)^q,~\forall~t\leq \mu.$$

\noindent (ii) of Lemma \ref{lemma-smooth}: When $t\neq 0$ and $t\neq\mu,$ $\theta^q(t,\mu)$ is twice continuously differentiable with respect to $t$. Recall $\theta^q(t,\mu)\geq \left(\mu/2\right)^q$ for all $t$ (cf. \eqref{lowerq}). Then it follows from \eqref{q2nd} that
$$\left|\left[\theta^q(t,\mu)\right]''\right|\leq 4q\mu^{q-2},~\forall~t\notin\left\{0,\mu\right\}.$$ This further implies (ii) of Lemma \ref{lemma-smooth}.

\noindent (iii) of Lemma \ref{lemma-smooth}: By the mean-value theorem \cite[Theorem 2.3.7]{clarke}, we have
    \begin{equation}\label{taylor}\theta^q(t,\mu)= \theta^q(\hat t,\mu)+\left[\theta^q(\hat t,\mu)\right]'\left(t-\hat t\right)+\frac{\upsilon}{2}\left(t-\hat t\right)^2,\end{equation} where $\upsilon\in\partial_{t}\left(\left[\theta^q(\xi\hat t+(1-\xi)t,\mu)\right]'\right)$ and $\xi\in[0,1].$ We consider the following three cases.
\begin{itemize}
  \item Case $\hat t>2\mu:$ Since $t-\hat t\geq {-\hat t}/{2},$ {it follows for any $\xi\in[0,1]$ that}  
$$\xi t+(1-\xi)\hat t=\hat t+\xi(t-\hat t)\geq \hat t/2>\mu.$$ This, together with \eqref{q2nd} and \eqref{kappa}, implies {that the} $\upsilon$ in \eqref{taylor} satisfies $$\upsilon\leq 0= \kappa(\hat t,\mu).$$ From this and \eqref{taylor}, we obtain \eqref{q-upperbnd}.
\item Case $\hat t\in[-\mu, 2\mu]:$ 
From (ii) of Lemma \ref{lemma-smooth}, $|\upsilon|$ is uniformly bounded by $\kappa(\hat t,\mu)={{4q}\mu^{q-2}}.$ Combining this with \eqref{taylor} yields \eqref{q-upperbnd}. 
\item Case $\hat t<-\mu:$ Since $t-\hat t\leq \mu,$ for any $\xi\in[0,1],$ it follows
$$\xi t+(1-\xi)\hat t=\hat t+\xi(t-\hat t)< 0.$$ From this, \eqref{q2nd}, \eqref{kappa}, and \eqref{taylor}, we can obtain \eqref{q-upperbnd}.
\end{itemize}  This completes the proof of Lemma \ref{lemma-smooth}. \qed

\begin{acknowledgements}
We would like to thank Prof. Xiaojun Chen and Dr. Wei Bian for many insightful comments, which helped us in improving the results in this paper. We also thank Dr. Qingna Li and Dr. Xin Liu for many useful discussions on an early version of this paper.
\end{acknowledgements}



\bibliography{liubibfiles}
\bibliographystyle{spmpsci}

\end{document}